\title{\texorpdfstring{On parahoric $(\calG, \mu)$-displays}{On parahoric (G, mu)-displays}}
\author{Manuel Hoff}
\address{Fakultät für Mathematik, Universität Bielefeld, 33501 Bielefeld, Germany}
\email{manuel.hoff@uni-bielefeld.de}
\begin{document}

    \begin{abstract}
        We develop tools to study spaces of $p$-divisible groups and Abelian varieties with additional structure.
        More precisely, we extend the definition of parahoric (Dieudonné) $(\calG, \bmmu)$-displays given by Pappas to not necessarily $p$-torsionfree base rings and also introduce the notion of an $(m, n)$-truncated $(\calG, \bmmu)$-display.
        Then we study the deformation theory of Dieudonné $(\calG, \bmmu)$-displays.

        As an application we realize the EKOR stratification of the special fiber of a Kisin-Pappas integral Shimura variety of Hodge type as the fibers of a smooth morphism into the algebraic stack of $(2, \onerdt)$-truncated $(\calG, \bmmu)$-displays.
    \end{abstract}

    \maketitle

    \tableofcontents

    \addtocontents{toc}{\protect\setcounter{tocdepth}{1}}
    \section*{Introduction}

This paper is motivated by the study of Shimura varieties, in particular the geometry of their special fibers at places of (parahoric) bad reduction.
These special fibers naturally support certain stratifications and it is an interesting problem of active research to study the properties of these stratifications.

Let us now explain what the so called EKOR-stratification is in the case of the Siegel Shimura variety.
Fix a rational prime $p$, an integer $g \geq 1$ and a subset $J \subseteq \ZZ$ with $J + 2g \ZZ \subseteq J$ and $-J \subseteq J$.
Denote by $\bfK_p \subseteq \GSp_{2g}(\Qp)$ the parahoric subgroup that is the stabilizer of some self-dual $\Zp$-lattice chain of type $J$ in $\Qp^{2g}$.
Also fix a (small enough) compact open subgroup $\bfK^p \subseteq \GSp_{2g}(\adeles^p)$ and set $\bfK \coloneqq \bfK_p \bfK^p$.
The associated Siegel Shimura variety $\Sh_{\bfK} =\Sh_{\bfK}(\GSp_{2g}, S^{\pm})$ then has a moduli description that was first given by de Jong in \cite{de-jong} and then in full generality by Rapoport and Zink in \cite{rapoport-zink}; it parametrizes certain polarized chains of $g$-dimensional Abelian varieties of type $J$ with $\bfK^p$-level structure.
This moduli description gives rise to a natural integral model $\scrS_{\bfK}$ of $\Sh_{\bfK}$ over $\Zp$ (whose special fiber is typically not smooth).

There exists a natural map, called the central leaves map,
\[
    \Upsilon \colon \scrS_{\bfK} \roundbr[\big]{\Fpbar} \to \breve{\bfK}_{p, \sigma} \backslash \GSp_{2g}(\Qpbrev);
\]
here $\breve{\bfK}_p = \calG(\Zpbrev)$ is the group of $\Zpbrev$-valued points of the parahoric group scheme $\calG$ over $\Zp$ corresponding to $\bfK_p$ and $\breve{\bfK}_{p, \sigma}$ denotes the twisted conjugation action $g.x = g x \sigma^{-1}(g)^{-1}$.
It is roughly given by sending a polarized chain of Abelian varieties to the twisted conjugacy class corresponding to the Frobenius $\Phi^{\cov}$ of the associated covariant rational Dieudonné module (more precisely to $p \cdot \sigma^{-1}(\Phi^{\cov, -1})$), see the work of Oort (\cite{oort-foliation}) and He and Rapoport (\cite{he-rapoport}).
The image of this map is given by $\breve{\bfK}_{p, \sigma} \backslash X$ where
\[
    X = \breve{\bfK}_p \Adm_{\calG}(\bmmu_g) \breve{\bfK}_p \subseteq \GSp_{2g}(\Qpbrev)
\]
is a finite union of double cosets.

The fibers of the composition
\[
    \lambda \colon \scrS_{\bfK} \roundbr[\big]{\Fpbar} \xrightarrow{\Upsilon} \breve{\bfK}_{p, \sigma} \backslash X \to \breve{\bfK}_p \backslash X / \breve{\bfK}_p = \Adm_{\calG}(\bmmu_g)
\]
define a stratification of $\scrS_{\bfK, \Fpbar}$ by smooth locally closed subschemes that is called the Kottwitz-Rapoport (KR) stratification.
In fact Rapoport and Zink construct the following data.
\begin{itemize}
    \item
    A flat projective scheme $\bbM^{\loc} = \bbM^{\loc}_{\calG, \bmmu_g}$ over $\Zp$ with a $\calG$-action called the local model, that parametrizes isotropic chains of $g$-dimensional subspaces of the given self-dual lattice chain.
    It satisfies $\bbM^{\loc}(\Fpbar) = X/\breve{\bfK}_p$.

    \item
    A smooth morphism of algebraic stacks
    \[
        \scrS_{\bfK} \to \squarebr[\big]{\calG \backslash \bbM^{\loc}}
    \]
    that parametrizes the Hodge filtration in the de Rham cohomology of a polarized chain of Abelian varieties and gives back the map $\scrS_{\bfK}(\Fpbar) \to \Adm_{\calG}(\bmmu_g)$ after taking $\Fpbar$-valued points.
\end{itemize}

He and Rapoport also consider the composition
\[
    \upsilon \colon \scrS_{\bfK} \roundbr[\big]{\Fpbar} \xrightarrow{\Upsilon} \breve{\bfK}_{p, \sigma} \backslash X \to \breve{\bfK}_{p, \sigma} \backslash \roundbr[\big]{X / \breve{\bfK}_{p, 1}} = \breve{\bfK}_{p, \sigma} (\breve{\bfK}_{p, 1} \times \breve{\bfK}_{p, 1}) \backslash X
\]
where $\breve{\bfK}_{p, 1} \subseteq \breve{\bfK}_p$ is the pro-unipotent radical, and call the (finitely many) fibers of $\upsilon$ Ekedahl-Kottwitz-Oort-Rapoport (EKOR) strata.
These EKOR strata can be shown to be smooth by comparing them to the KR strata at Iwahori level $J = \ZZ$, using a result of Görtz and Hoeve (see \cite{goertz-hoeve}).

In the hyperspecial case $J = 2g\ZZ$ the EKOR stratification is also just called the Ekedahl-Oort (EO) stratification and was first considered by Oort (see \cite{oort-stratification}).
Two points in $\scrS_{\bfK}(\Fpbar)$ corresponding to two polarized Abelian varieties $A$ and $A'$ lie in the same EO stratum if and only if $A[p] \cong A'[p]$.
Viehmann and Wedhorn (see \cite{viehmann-wedhorn}) realize the EO stratification as the fibers of a smooth morphism from $\scrS_{\bfK, \Fp}$ into a certain algebraic stack of zips with symplectic structure (in the sense of Moonen-Wedhorn and Pink-Wedhorn-Ziegler, see \cite{moonen-wedhorn} and \cite{pink-wedhorn-ziegler-zips}).

Thus it is natural to ask the following question.

\begin{question} \label{ques:ekor-morphism}
    Is it possible to naturally realize the map $\upsilon$ (or maybe even $\Upsilon$) as a smooth morphism from $\scrS_{\bfK, \Fp}$ to some algebraic stack that is defined in terms of linear algebraic/group theoretic data?
\end{question}

The existence of such a smooth morphism would in particular give a new proof of the smoothness of the EKOR strata and the closure relations between them.
More importantly it could also provide a tool for studying the geometry of $\scrS_{\bfK, \Fp}$ and cycles on it.

The goal of this article is to give an affirmative answer to Question \ref{ques:ekor-morphism} and its natural generalization to Shimura varieties of Hodge type.
To this end we develop a theory of certain truncated $(\calG, \bmmu)$-displays, extending work of Pappas (\cite{pappas}).
Then, using results of Hamacher and Kim (\cite{hamacher-kim}) and Pappas, we construct a morphism from the $p$-completion of the Shimura variety to the moduli space of these objects and show that it is smooth.

\vspace*{5mm}

Let us give an overview of results that have been obtained so far.

\begin{itemize}
    \item
    Moonen and Wedhorn (\cite{moonen-wedhorn}) introduce the notion of an $F$-zip that is a characteristic $p$ analog of the notion of a Hodge structure.
    Given an Abelian variety $A$ over some $\Fp$-algebra $R$ the de Rham cohomology $H^1_{\dR}(A/R)$ naturally is equipped the structure of an $F$-zip.
    If $R$ is perfect then the datum of $H^1_{\dR}(A/R)$ with its $F$-zip structure is equivalent to the datum of the Dieudonné module of the $p$-torsion $A[p]$.

    Pink, Wedhorn and Ziegler (\cite{pink-wedhorn-ziegler-zips}) define a group theoretic version of the notion of an $F$-zip (that is called $\calG$-zip).

    \item
    Viehmann and Wedhorn (\cite{viehmann-wedhorn}) define a moduli space of $F$-zips with polarization and endomorphism structure (that they call $\calD$-zips) in a PEL-type situation with hyperspecial level structure (that in particular includes the hyperspecial Siegel case).
    They construct a morphism from the special fiber of the associated Shimura variety to this stack of $\calD$-zips that parametrizes the EO stratification.
    Then they show that this morphism is flat and use this to deduce that the EO strata are non-empty and quasi-affine and to compute their dimension (smoothness of the EO strata was already shown by Vasiu in \cite{vasiu-crystalline-boundedness}).

    Zhang (\cite{zhang}) constructs a morphism from the special fiber of the Kisin integral model (see \cite{kisin}) of a Hodge type Shimura variety at hyperspecial level to the group-theoretic stack of $\calG$-zips.
    They then show that this morphism is smooth and thus gives an EO stratification with the desired properties.
    Shen and Zhang (\cite{shen-zhang}) later generalize this to Shimura varieties of Abelian type.

    Hesse (\cite{hesse}) considers an explicit moduli space of chains of $F$-zips with polarization and constructs a morphism from the Siegel modular variety at parahoric level into this stack.
    However it appears that such a morphism is not well-behaved in a non-hyperspecial situation.

    \item
    Xiao and Zhu (\cite{xiao-zhu}) construct a perfect stack of local shtukas $\Sht^{\loc}_{\calG, \bmmu}$ as well as truncated versions $\Sht^{\loc, (m, n)}_{\calG, \bmmu}$ in a hyperspecial situation.
    For a Shimura variety of Hodge type (still at hyperspecial level) they construct a morphism from the perfection of its special fiber into $\Sht^{\loc}_{\calG, \bmmu}$ that gives a central leaves map $\Upsilon$.
    They claim that the induced morphisms to $\Sht^{\loc, (m, n)}_{\calG, \bmmu}$ are perfectly smooth (however, since the diagram on \cite[page 113]{xiao-zhu} is not commutative, the argument in their proof does not seem to work).
    They also construct a natural perfectly smooth forgetful morphism from $\Sht^{\loc, (2, 1)}_{\calG, \bmmu}$ to the perfection of the stack of $\calG$-zips and in particular recover the smoothness result of Zhang after perfection.

    \item
    Shen, Yu and Zhang (\cite{shen-yu-zhang}) generalize the previous work of Zhang, Shen-Zhang and Xiao-Zhu to Shimura varieties of Abelian type at parahoric level (where the construction of integral models is due to Kisin and Pappas, see \cite{kisin-pappas}).
    They give two constructions.
    \begin{itemize}
        \item
        They construct a morphism from each KR stratum into a certain stack of $G$-zips, parametrizing the EKOR strata contained in this KR stratum.
        They also show that this morphism is smooth, thus establishing the smoothness of the EKOR strata.

        \item
        They also construct perfect stacks of local shtukas $\Sht^{\loc}_{\calG, \bmmu}$ and truncated versions $\Sht^{\loc, (m, n)}_{\calG, \bmmu}$ in the parahoric situation and a morphism from the perfection of the special fiber of the Shimura variety to $\Sht^{\loc}_{\calG, \bmmu}$, realizing $\Upsilon$, such that the induced morphisms to $\Sht^{\loc, (m, n)}_{\calG, \bmmu}$ are perfectly smooth (the proof is by the same argument as in \cite{xiao-zhu} and there seems to be the same problem).
    \end{itemize}

    \item
    Zink (\cite{zink}) defines a stack of (not necessarily nilpotent) displays over $\Spf(\Zp)$.
    Displays are semi-linear algebraic objects that are closely related to $p$-divisible groups.
    Bültel and Pappas (\cite{bueltel-pappas}) define group theoretic versions of displays in hyperspecial situations; their objects are called $(\calG, \mu)$-displays.
    In characteristic $p$ this gives a deperfection of the notion of local shtuka from \cite{xiao-zhu}.

    In the parahoric Hodge type situation there is a definition of $(\calG, \bmmu)$-display due to Pappas (see \cite{pappas}) but this definition is only over $p$-torsionfree $p$-adic rings.
    Using results of Hamacher and Kim (\cite{hamacher-kim}) Pappas also constructs a parahoric display on the $p$-completion of the integral model and shows that it is locally universal.

    \item
    In \cite{hoff-siegel} the author defines the notion of a homogeneously polarized chain of displays and certain truncated variants.
    Then they construct a smooth morphism from the special fiber of a Siegel modular variety at parahoric level to the moduli stack of these truncated objects.
    This solves the problem given in Question \ref{ques:ekor-morphism} in the Siegel case.
\end{itemize}

\subsection*{Content}\hfill\\
Let $p$ be a rational prime not equal to $2$.

\subsubsection*{$(m, n)$-truncated displays}\hfill\\
Let $R$ be a $p$-complete ring. 
Denote by $W(R)$ the associated ring of Witt vectors, by $I_R \subseteq W(R)$ the augmentation ideal, i.e.\ the kernel of the projection $W(R) \to R$, by $\sigma \colon W(R) \to W(R)$ the Frobenius morphism and by $\sigma^{\divd} \colon I_R^{\sigma} \to W(R)$ the divided Frobenius, i.e.\ the linearization of the inverse of the Verschiebung morphism.
Having set up this notation, we now recall Zink's definition of a (not necessarily nilpotent) display.

\begin{definition}[{\cite[Definition 1]{zink}}]
    A \emph{display over $R$} is a tuple $(M, M_1, \Phi, \Phi_1)$ that is given as follows.
    \begin{enumerate}
        \item
        $M$ is a finite projective $W(R)$-module.

        \item
        $M_1 \subseteq M$ is a $W(R)$-submodule such that we have $I_R M \subseteq M$ and such that $M_1/I_R M \subseteq M/I_R M$ is a direct summand.

        \item
        $\Phi \colon M^{\sigma} \to M$ and $\Phi_1 \colon M_1^{\sigma} \to M$ are morphisms of $W(R)$-modules such that $\Phi_1$ is surjective and the diagram
        \[
        \begin{tikzcd}[column sep = large]
            (I_R M)^{\sigma} \ar[r] \ar[d]
            & M_1^{\sigma} \ar[d, "\Phi_1"]
            \\
            I_R^{\sigma} \otimes_{W(R)} M^{\sigma} \ar[r, "\sigma^{\divd} \otimes \Phi"]
            & M
        \end{tikzcd}
        \]
        is commutative.
    \end{enumerate}
\end{definition}

There is a natural way of associating to a tuple $(M, M_1)$ satisfying (1) and (2) above (we call such a tuple a \emph{pair}) a finite projective $W(R)$-module $\widetilde{M_1}$ that is given by $\widetilde{M_1} = \im(M_1^{\sigma} \to M^{\sigma})$ in the case that $W(R)$ is $p$-torsionfree.
Giving $(\Phi, \Phi_1)$ is then equivalent to giving an isomorphism of $W(R)$-modules $\Psi \colon \widetilde{M_1} \to M$.
So we see that displays can also be viewed as tuples $(M, M_1, \Psi)$.
This is the point of view that we will use.

When $R$ is a complete Noetherian local ring with perfect residue field of characteristic $p$ then Zink also defines the notion of a \emph{Dieudonné display over $R$} in \cite{zink-dieudonne}.
The definition of a Dieudonné display is the same as that of a display, up to replacing the ring of Witt vectors $W(R)$ with the Zink ring $\What(R) \subseteq W(R)$.

\vspace*{5mm}

Now also fix a tuple $(m, n)$ of positive integers such that $m \geq n + 1$.
Denote by $W_m(R)$ and $W_n(R)$ the rings of truncated Witt vectors of length $m$ and $n$ respectively and by $I_{m, R} \subseteq W_m(R)$ and $I_{n, R} \subseteq W_n(R)$ their augmentation ideals.
The Frobenius $\sigma$ on $W(R)$ then induces $\sigma \colon W_m(R) \to W_n(R)$.
We can now make the following definition.

\begin{definition}[{\Cref{def:display}}]
    An \emph{$(m, n)$-truncated display over $R$} is a tuple $(M, M_1, \Psi)$ that is given as follows.
    \begin{itemize}
        \item
        $M$ is a finite projective $W_m(R)$-module.

        \item
        $M_1 \subseteq M$ is a $W_m(R)$-submodule such that we have $I_{m, R} M \subseteq M_1$ and such that the $R$-submodule $M_1/I_{m, R} M \subseteq M/I_{m, R} M$ is a direct summand.

        \item
        $\Psi \colon \widetilde{M_1} \to W_n(R) \otimes_{W_m(R)} M$ is an isomorphism of $W_n(R)$-modules; here $\widetilde{M_1}$ is a finite projective $W_n(R)$-module that is naturally attached to $(M, M_1)$ similarly as in the non-truncated situation.
    \end{itemize}
\end{definition}

There already exists the notion of an \emph{$n$-truncated display} that is due to Lau in characteristic $p$ (\cite{lau-truncated}) and due to Lau and Zink in general (\cite{lau-zink-truncated}).
The two notions can be related; there exists a natural forgetful functor
\[
    \curlybr[\big]{\text{$(m, n)$-truncated displays}} \to \curlybr[\big]{\text{$n$-truncated displays}}
\]
that can be thought of as forgetting some of the information that is contained in the $W_m(R)$-module $M$ (but it remembers more than just the base change $W_n(R) \otimes_{W_m(R)} M$).
Our $(m, n)$-truncated displays can be thought of as more naive versions of $n$-truncated displays.
But it seems to us that for defining truncated $(\calG, \bmmu)$-displays in a parahoric situation, these $(m, n)$-truncated objects are better suited.

\subsubsection*{$(\calG, \bmmu)$-displays}\hfill\\
Let $(\calG, \bmmu)$ be a local model datum, i.e.\ a tuple consisting of a parahoric $\Zp$-group scheme $\calG$ with generic fiber $G = \calG_{\Qp}$ and a minuscule $G(\Qpbar)$-conjugacy class $\bmmu$ of cocharacters $\mu \colon \bfG_{m, \Qpbar} \to G_{\Qpbar}$.
Denote by $E$ the reflex field of $(\calG, \bmmu)$ and by $\Fq$ its residue field.
By work of Rapoport-Zink, Scholze-Weinstein, Pappas-Zhu, Anschütz-Gleason-Lourenço-Richarz, Fakhruddin-Haines-Lourenço-Richarz and many others we then have a natural associated local model $\bbM^{\loc} = \bbM^{\loc}_{\calG, \bmmu}$ that is a flat projective $\calOE$-scheme with a $\calG$-action whose generic fiber identifies with the homogeneous space $X_{\bmmu}(G)$ of parabolic subgroups of $G_E$ of type $\bmmu^{-1}$.
We denote by $\rmM^{\loc}$ the $p$-completion of $\bbM^{\loc}$.
Let $R$ be a $p$-complete $\calOE$-algebra.

The following definition is given implicitly in Pappas' article (\cite{pappas}).

\begin{definition}[{\Cref{def:g-mu-pairs}}]
    A \emph{$(\calG, \bmmu)$-pair over $R$} is a tuple $(\calP, q)$ consisting of a $\calG$-torsor $\calP$ over $W(R)$ and a $\calG$-equivariant morphism $q \colon \calP_R \to \rmM^{\loc}$.
\end{definition}

Note that $(\calG, \bmmu)$-pairs $(\calP, q)$ are group theoretic versions of pairs $(M, M_1)$ as considered above; the $\calG$-torsor $\calP$ corresponds to the module $M$ and the morphism $q$ corresponds to the filtration $M_1$.
In the case $\calG = \GL_{h, \Zp}$ and $\bmmu = \bmmu'_d = (0^{(h - d)}, (-1)^{(d)})$ we have an equivalence
\[
    \set[\Bigg]{\text{pairs $(M, M_1)$}}{\begin{gathered} \text{$M$ is of rank $h$,} \\ \text{$M_1/I_R M$ is of rank $d$} \end{gathered}} \to \curlybr[\Big]{\text{$(\GL_{h, \Zp}, \bmmu'_d)$-displays}}.
\]

When $(\calG, \bmmu)$ is of Hodge type (see Notation \ref{not:g-mu-displays-group-setup} for more details) and $R$ is $p$-torsionfree then Pappas constructs a natural functor
\[
    \curlybr[\big]{\text{$(\calG, \bmmu)$-pairs}} \to \curlybr[\big]{\text{$\calG$-torsors over $W(R)$}}, \qquad (\calP, q) \mapsto (\calP, q)^{\sim}
\]
that generalizes the construction $(M, M_1) \mapsto \widetilde{M_1}$ to the group-theoretic situation, and consequently is able to define the notion of a $(\calG, \bmmu)$-display over $R$.

\begin{definition}[{\cite[Definition 4.2.2]{pappas}, see also \Cref{def:g-mu-disps}}]
    A \emph{$(\calG, \bmmu)$-display over $R$} is a tuple $(\calP, q, \Psi)$ where $(\calP, q)$ is a $(\calG, \bmmu)$-pair over $R$ and $\Psi \colon (\calP, q)^{\sim} \to \calP$ is an isomorphism of $\calG$-torsors over $W(R)$.
\end{definition}

Similarly, when $R$ is a $p$-torsionfree complete Noetherian local $\calOE$-algebra with perfect residue field of characteristic $p$, then Pappas also defines the notion of a Dieudonné $(\calG, \bmmu)$-display.

\vspace*{5mm}

We observe that Pappas' construction extends to the situation when $R$ is not necessarily $p$-torsionfree, allowing us to also extend the definition of $(\calG, \bmmu)$-display.

We also define the notion of an \emph{$(m, n)$-truncated $(\calG, \bmmu)$-display} for $(m, n)$ as before.
The groupoids of $(m, n)$-truncated $(\calG, \bmmu)$-displays over varying $p$-complete $\calOE$-algebras form a $p$-adic formal algebraic stack
\[ 
    \Disp_{\calG, \bmmu}^{(m, n)}
\]
of finite presentation over $\Spf(\calOE)$.

When $R$ is an $\Fq$-algebra and $m \geq 2$ then we also define the notion of an \emph{$(m, \onerdt)$-truncated $(\calG, \bmmu)$-display over $R$};
here the symbol $\onerdt$ indicates that the isomorphism $\Psi$ is between $\calG_{\Fp}^{\rdt}$-torsors over $R$ (where $\calG_{\Fp}^{\rdt}$ denotes the reductive quotient of the special fiber of $\calG$) instead of between $\calG$-torsors over $R$ as it would be for $(m, 1)$-truncated $(\calG, \bmmu)$-displays.
Again, the groupoids of $(m, \onerdt)$-truncated $(\calG, \bmmu)$-displays over varying $\Fq$-algebras form an algebraic stack $\Disp_{\calG, \bmmu}^{(m, \onerdt)}$ of finite presentation over $\Fq$, and we have a natural bijection
\[
    \abs[\big]{\Disp_{\calG, \bmmu}^{(m, \onerdt)}(\Fpbar)} \to \breve{\bfK}_{p, \sigma} (\breve{\bfK}_{p, 1} \times \breve{\bfK}_{p, 1}) \big\backslash \breve{\bfK}_p \Adm_{\calG}(\bmmu) \breve{\bfK}_p
\]
where $\abs{\blank}$ denotes taking the set of isomorphism classes of a groupoid and $\breve{\bfK}_p = \calG(\Zpbrev)$ similarly as before.

When $\calG$ is a parahoric group scheme for either a general linear or a general symplectic group then we give an explicit description of $(\calG, \bmmu)$-displays in terms of (homogeneously polarized) chains of displays, see also our preprint \cite{hoff-siegel}.
Here we use the description of $\calG$-torsors given by Rapoport and Zink in \cite{rapoport-zink}.

We then study the deformation theory of (Dieudonné) $(\calG, \bmmu)$-displays.
Here our main result is the following.

\begin{theorem}[{\Cref{thm:universal-deformation-g-mu-display}}]
    Let $\calP_0 = (\calP_0, q_0, \Psi_0)$ be a $(\calG, \bmmu)$-display over $\Fpbar$.
    Then $\calP_0$ admits a universal deformation with an explicit description.
\end{theorem}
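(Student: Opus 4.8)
The plan is to show that the deformation functor of $\calP_0$ on Artinian local $\mathcal{O}_{\breve{E}}$-algebras with residue field $\Fpbar$ is pro-represented by the complete local ring $\widehat{\mathcal{O}}_{\rmM^{\loc}, x_0}$ of the local model at the point $x_0$ determined by $q_0$, with universal object the tautological family over this formal neighbourhood. First I would banalize. Since $\calG$ is smooth with connected fibres and $\What(\Fpbar)$ is a complete local ring with algebraically closed residue field, every $\calG$-torsor over it is trivial; fixing a trivialization $\calP_0 \cong \calG_{\What(\Fpbar)}$ turns $q_0$ into a $\calG$-equivariant map determined by the point $x_0 = q_0(e) \in \rmM^{\loc}(\Fpbar)$ and turns $\Psi_0$ into an element $U_0 \in \calG(\What(\Fpbar))$. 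A deformation over $R$ then consists of a lift $x \colon \operatorname{Spec}(R) \to \rmM^{\loc}$ of $x_0$ together with a lift $U$ of $U_0$, taken modulo the twisted-conjugation action $U \mapsto g\,U\,\sigma_\mu(g)^{-1}$ of those $g \in \calG(\What(R))$ reducing to the identity; here $\sigma_\mu$ is the $\mu$-twisted Frobenius encoded by the construction $(\calP, q) \mapsto (\calP, q)^{\sim}$.

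Next I would split the functor into its pair-part and its $\Psi$-part. Because torsors over $\What(R)$ are rigid for $R$ local Artinian, deforming the pair $(\calP, q)$ is the same as deforming the point $q \colon \operatorname{Spec}(R) \to \rmM^{\loc}$, so the pair-deformation functor is pro-represented by $\widehat{\mathcal{O}}_{\rmM^{\loc}, x_0}$. It then remains to prove that, once the pair is fixed, the datum $\Psi$ is rigid: the lift $U$ exists and is unique up to a unique isomorphism reducing to the identity. I would establish this by the Frobenius-linear computation on the Lie algebra $\mathfrak{g} = \operatorname{Lie}(\calG)$. For a small extension $R' \to R$ with square-zero kernel $\mathfrak{a}$, the obstruction to lifting $U$ as well as the lifts themselves are governed by the operator $1 - \mathrm{Ad}(U_0)\circ\sigma_\mu$ on the part of $\mathfrak{g} \otimes \mathfrak{a}$ complementary to the tangent directions of $\rmM^{\loc}$, namely on the parabolic piece $\mathfrak{p}_\mu = \mathfrak{g}_{\geq 0}$ cut out by the weight filtration of $\mu$ (the divided Frobenius $\sigma^{\divd}$ only appears on the complementary piece $\mathfrak{g}_{-1}$, which accounts for the filtration moduli). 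On $\mathfrak{p}_\mu$ the twist $\sigma_\mu$ involves no division by $p$, so in characteristic $p$ with $\mathfrak{a}$ nilpotent the semilinear operator $\mathrm{Ad}(U_0)\circ\sigma_\mu$ is topologically nilpotent; hence $1 - \mathrm{Ad}(U_0)\circ\sigma_\mu$ is bijective there. Bijectivity yields simultaneously the vanishing of the $\Psi$-obstruction (surjectivity) and the rigidity of $\Psi$ (injectivity), so the forgetful map from the full deformation functor to the pair-deformation functor is an isomorphism.

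Putting these together, $\mathrm{Def}_{\calP_0} \cong \widehat{\mathcal{O}}_{\rmM^{\loc}, x_0}$, and the universal deformation is the tautological pair over the formal neighbourhood of $x_0$ in $\rmM^{\loc}$ equipped with the unique lift of $U_0$; this is the promised explicit description. The main obstacle is the Frobenius-rigidity step in the parahoric setting: because $\calG$ is not reductive over $\Zp$ and $\rmM^{\loc}$ is singular, the weight grading $\mathfrak{g} = \bigoplus_i \mathfrak{g}_i$ is induced by a cocharacter $\mu$ into the generic fibre rather than by an honest grading of $\operatorname{Lie}(\calG)$, and the identification of the tangent space of $\rmM^{\loc}$ with $\mathfrak{g}_{-1}$ (clean in the hyperspecial case) must be replaced by an integral argument through the local model and the extension of Pappas' construction $(\cdot)^{\sim}$ to non-$p$-torsionfree rings. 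Verifying that $1 - \mathrm{Ad}(U_0)\circ\sigma_\mu$ really is bijective on the complementary piece—so that all the $\Psi$-moduli and obstructions collapse and only the genuinely singular local-model directions survive—is the technical heart of the argument. For Dieudonné $(\calG, \bmmu)$-displays one runs the same argument with the Zink ring $\What$ in place of the full Witt vectors $W$.
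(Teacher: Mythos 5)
Your proposed reduction breaks at its first step, and the failure is already visible in the classical hyperspecial case $\calG = \GL_h$. You split the deformation functor into a pair-part, claimed to be pro-represented by $\widehat{\calO}_{\rmM^{\loc}, x_0}$, and a $\Psi$-part, claimed to be rigid. Neither claim is correct. An isomorphism class of deformations has no well-defined underlying lift of $x_0$: reading off a point of $\rmM^{\loc}$ requires a trivialization of the deformed torsor, and isomorphisms of deformations (elements of $\ker(\calG(\What(R)) \to \calG(\What(\Fpbar)))$) move this point through the $\calG(R)$-action on $\rmM^{\loc}(R)$. Concretely, take $\calG = \GL_h$ with $0 < d < h$ and $R = \Fpbar[\varepsilon]$: since the Grassmannian is $\calG$-homogeneous, all lifts of $x_0$ are gauge-equivalent, so up to isomorphism there is exactly \emph{one} first-order deformation of the pair; nevertheless $\Def(\Fpbar[\varepsilon])$ has dimension $d(h-d)$ by \Cref{thm:universal-deformation-display}. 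So after passing to isomorphism classes all first-order moduli are carried by $\Psi$ over a constant pair --- the exact opposite of your picture, and your rigidity claim (``once the pair is fixed, $\Psi$ is rigid'') is false. What actually identifies $\Def_{\calG}$ with the formal neighbourhood of $x_0$ in $\rmM^{\loc}$ is Grothendieck--Messing theory: the crystal structure furnishes a canonical trivialization of the deformed torsor (``rigid in the first order'', see \Cref{rmk:locally-universal}), and it is the filtration read in \emph{that} trivialization, not the underlying pair, that gives the point of $\rmM^{\loc}$. Your Frobenius-contraction idea does occur in the paper, but in this different role: the convergent series in \Cref{prop:dieudonne-display-canonical-iso} produces the canonical untwisting isomorphism over $p$-torsionfree rings; it is not, and cannot be, a proof of $\Psi$-rigidity over a fixed pair.

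The parahoric issues you defer to the end are not details to be filled in later; they rule out your infinitesimal strategy altogether, and they are where the paper's work lies. First, ``the part of $\operatorname{Lie}(\calG) \otimes \fraka$ complementary to the tangent directions of $\rmM^{\loc}$'' does not exist: $\rmM^{\loc}$ is singular and not $\calG$-homogeneous, the orbit map from the Lie algebra to $T_{x_0} \rmM^{\loc}$ is in general neither surjective nor split, and there is no integral weight grading of the Lie algebra of a parahoric $\calG$. Second, having the functor $(M, M_1) \mapsto \widetilde{M_1}$, with its $\calG$-structure, behave well along square-zero thickenings of Artinian ($p$-torsion!) rings is precisely what is \emph{not} known in the parahoric setting: this is Assumption \ref{hyp:grothendieck-messing} (cf.\ \Cref{rmk:g-mu-grothendieck-messing}), on which \Cref{thm:universal-deformation-g-mu-display} is explicitly conditional and which your proposal never mentions. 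Since $\calG$-equivariant Grothendieck--Messing theory (\Cref{thm:g-mu-grothendieck-messing}) and the untwisting isomorphism (\Cref{prop:dieudonne-display-canonical-iso}) are only established for $p$-torsionfree thickenings $S$ with $\frakm_S = \sqrt{pS}$, no argument running over Artinian small extensions is currently available. This is exactly why the paper proceeds differently: it verifies $\Spf(R_{\calG}) = \Def_{\calG}$ on $\calO_K$-points using $\calO_K \doublesquarebr{t}$-families (\Cref{lem:def-g-connecting-points}, \Cref{lem:def-g-spreading-out}), reduces arbitrary $R$ to reduced $p$-torsionfree rings injecting into products of $\calO_K$'s (\Cref{lem:def-g-lift-reduced-p-torsionfree}), and concludes via the uniqueness of $\calG$-structures (\Cref{cor:g-structure-unique}).
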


This theorem can be viewed as a more conceptual version of \cite[Proposition 3.2.17]{kisin-pappas} and some of the arguments we use in the proof are inspired by the arguments of Kisin and Pappas.

We should remark that our result is not unconditional.
We make a certain assumption (see Assumption \ref{hyp:grothendieck-messing}) in order to construct the universal deformation.
In fact we believe that this assumption is always satisfied, and we are able to prove this in a PEL-type situation.
Note that the same assumption is also assumed implicitly in \cite[Subsection 3.2.12]{kisin-pappas}.

\subsubsection*{Application to Shimura varieties}\hfill\\
Let $(\bfG, \bfX)$ be a Shimura datum of Hodge type, let $\bfK = \bfK_p \bfK^p \subseteq \bfG(\adeles)$ be a small enough compact open subgroup such that $\bfK_p$ is a parahoric stabilizer and denote by $\scrS_{\bfK}$ the associated integral Shimura variety over the ring of integers $\calOE$ of the local reflex field $E$ as defined by Kisin and Pappas in \cite{kisin-pappas}.
There is a local model datum $(\calG, \bmmu)$ attached to the Shimura datum $(\bfG, \bfX)$ and the parahoric subgroup $\bfK_p \subseteq \bfG(\Qp)$ and by work of Hamacher and Kim (\cite{hamacher-kim}) and Pappas the $p$-completion $\widehat{\scrS_{\bfK}}$ naturally supports a $(\calG, \bmmu)$-display.

Using results of from \cite{kisin-pappas} and \cite{pappas} we show the following theorem.

\begin{theorem}[{\Cref{thm:main-result-hodge-type}}]
    Let $(m, n)$ be a tuple of positive integers with $m \geq n + 1$.
    Then the morphism
    \[
        \widehat{\scrS_{\bfK}} \to \Disp_{\calG, \bmmu}^{(m, n)}
    \]
    is smooth.
    Similarly, for a positive integer $m \geq 2$ also the morphism
    \[
        \scrS_{\bfK, \Fq} \to \Disp_{\calG, \bmmu}^{(m, \onerdt)}
    \]
    is smooth.
\end{theorem}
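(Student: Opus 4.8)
The plan is to verify smoothness on the formal neighborhoods of $\Fpbar$-points and to reduce the statement entirely to the deformation theory developed above. Both $\widehat{\scrS_{\bfK}}$ and $\Disp_{\calG, \bmmu}^{(m,n)}$ are of finite presentation over $\Spf(\calOE)$, so it suffices to establish formal smoothness, and this can be tested after passing to formal completions at geometric points (with the automorphism groups of the truncated displays taken into account, to handle the stacky aspect). I therefore fix an $\Fpbar$-valued point $x$ of $\scrS_{\bfK}$, let $\calP_0$ be the associated Dieudonné $(\calG, \bmmu)$-display, and let $\calP_0^{(m,n)}$ be its $(m,n)$-truncation; the goal becomes to show that the induced map on pro-represented deformation problems is formally smooth.

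First I would identify the source completion with a universal deformation. By the local universality of the display on $\widehat{\scrS_{\bfK}}$ established by Pappas on the basis of the work of Hamacher and Kim, together with the identification of the completed local rings of the Kisin-Pappas integral model in \cite[Proposition 3.2.17]{kisin-pappas}, the completed local ring $\widehat{\mathcal{O}}_{\scrS_{\bfK}, x}$ is canonically the universal deformation ring of $\calP_0$ as a Dieudonné $(\calG, \bmmu)$-display and carries the universal such display; its explicit form is exactly the content of \Cref{thm:universal-deformation-g-mu-display}. On the target side, the formal completion of $\Disp_{\calG, \bmmu}^{(m,n)}$ at the image point pro-represents, up to its automorphism gerbe, the deformation functor of the truncated display $\calP_0^{(m,n)}$. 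Under these identifications the morphism in question becomes the natural truncation map from deformations of $\calP_0$ to deformations of $\calP_0^{(m,n)}$.

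The heart of the proof is to show that this truncation map is formally smooth, and here I would argue directly from the explicit universal deformation. The description in \Cref{thm:universal-deformation-g-mu-display} presents a deformation of $\calP_0$ as a lift of the filtration $q_0$ — governed by the completed local ring of the local model $\bbM^{\loc}$ — together with a lift of the Frobenius isomorphism $\Psi_0$. The key observation is that the $(m,n)$-truncation only records $\Psi$ modulo the relevant length of Witt vectors, so the deformation directions forgotten by truncation are precisely the higher Witt-coordinates of the lift of $\Psi_0$; these deform without obstruction and contribute a formally smooth factor. Concretely I would show that the universal deformation ring of $\calP_0$ is formally smooth over that of $\calP_0^{(m,n)}$, with the common singularities coming from $\bbM^{\loc}$ cancelling on the two sides. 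Disentangling which Witt-vector data survive the truncation and verifying that the complementary directions are genuinely unobstructed is the step I expect to be the main obstacle.

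For the $(m, \onerdt)$-statement over $\Fq$ I would factor the morphism through $\Disp_{\calG, \bmmu}^{(m,1)}$, which is legitimate since $m \geq 2$. The first part of the theorem (applied with $n = 1$), base-changed to $\Fq$, shows that $\scrS_{\bfK, \Fq} \to \Disp_{\calG, \bmmu}^{(m,1)}$ is smooth, so it remains to see that the further truncation $\Disp_{\calG, \bmmu}^{(m,1)} \to \Disp_{\calG, \bmmu}^{(m, \onerdt)}$ is smooth. This last map only replaces the structure group of the Frobenius isomorphism $\Psi$ by the reductive quotient $\calG_{\Fp}^{\rdt}$ of the special fiber; its relative structure is governed by the unipotent radical of $\calG_{\Fp}$, which is smooth, and therefore contributes only further formally smooth directions. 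Hence the composite $\scrS_{\bfK, \Fq} \to \Disp_{\calG, \bmmu}^{(m, \onerdt)}$ is smooth by the same comparison of deformation rings.
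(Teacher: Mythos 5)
Your skeleton agrees with the paper's proof: reduce smoothness to an infinitesimal lifting problem along surjections $R' \to R$ of Artinian local $\calO_{\Ebrev}$-algebras with residue field $\Fpbar$, invoke \Cref{thm:shimura-variety-g-mu-display} (local universality, which by \Cref{rmk:locally-universal} and \Cref{thm:universal-deformation-g-mu-display} identifies $\Spf(\widehat{\calO}_{\scrS_{\bfK}, x})$ with the deformation functor $\Def_{\calG}$ of the Dieudonné $(\calG, \bmmu)$-display at $x$), and then show that deformations of the full Dieudonné display map formally smoothly onto deformations of the $(m, n)$-truncation. Your reduction of the $(m, \onerdt)$ case to the $(m, 1)$ case via the smooth forgetful morphism $\Disp_{\calG, \bmmu}^{(m, 1)} \to \Disp_{\calG, \bmmu}^{(m, \onerdt)}$ is also how the paper proceeds, via \Cref{lem:restriction-smooth}.

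The gap is the central step, and you flag it yourself as ``the main obstacle'': you never prove that the truncation map on deformations is formally smooth. In the paper this is exactly \Cref{lem:restriction-smooth-dieudonne}: given a Dieudonné $(\calG, \bmmu)$-display over $R$, an $(m, n)$-truncated display over $R'$ and an identification over $R$, there is a compatible lift to a Dieudonné display over $R'$. Its proof is soft, and your proposed route --- an explicit analysis of the universal deformation in which the ``higher Witt-coordinates'' are unobstructed and ``the common singularities coming from $\bbM^{\loc}$ cancel on the two sides'' --- is both unnecessary and misdirected. No statement about the singularities of $\bbM^{\loc}$ is needed, because no point of $\rmM^{\loc}$ ever has to be lifted along $R' \to R$: by \Cref{def:g-mu-pairs}, an $m$-truncated $(\calG, \bmmu)$-pair over $R'$ already contains a $\calG$-equivariant morphism $q \colon \calP_{R'} \to \rmM^{\loc}$, so the full filtration datum over $R'$ is part of the given truncated display. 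What must be lifted is only the $\calG$-torsor, along $\What(R') \to \What(R) \times_{W_m(R)} W_m(R')$, and then the isomorphism $\Psi$, along $\What(R') \to \What(R) \times_{W_n(R)} W_n(R')$; both liftings exist simply because $\calG$ is smooth. A second, smaller problem: since $\Disp_{\calG, \bmmu}^{(m, n)}$ is a quotient stack, its completion at a point is not pro-represented by a ring, so ``the universal deformation ring of $\calP_0^{(m,n)}$'' that you want to compare against is not actually defined, even ``up to its automorphism gerbe''; the paper sidesteps this by never completing the target and instead solving the $2$-commutative lifting square directly, a morphism $\Spec(R') \to \Disp_{\calG, \bmmu}^{(m, n)}$ being nothing other than an $(m, n)$-truncated display over $R'$.
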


The geometric fibers of the morphism $\scrS_{\bfK, \Fq} \to \Disp_{\calG, \bmmu}^{(m, \onerdt)}$ are precisely the EKOR strata, so that this in particular answers Question \ref{ques:ekor-morphism} affirmatively.

After passing to to the perfection of the special fiber our theorem recovers the result \cite[Theorem 4.4.3]{shen-yu-zhang} by Shen, Yu and Zhang (whose proof seems problematic to us as was already remarked earlier), see \Cref{cor:main-result-perfection}, at least up to a slightly different normalization in the definition of $(m, n)$-restricted local shtukas.

Similarly, after restricting to a single KR stratum, we also recover \cite[Theorem 3.4.11]{shen-yu-zhang}, see \Cref{cor:main-result-kr-local}.

\subsection*{Acknowledgements}\hfill\\
I am grateful to my supervisor Ulrich Görtz for his constant support throughout my PhD studies.
I would also like to thank Sebastian Bartling, Jochen Heinloth, Pol van Hoften, Giulio Marazza, George Pappas and Torsten Wedhorn for helpful discussions and suggestions.

This work was partially funded by the DFG Graduiertenkolleg 2553.
Up to minor changes it agrees with a part of the authors PhD thesis \cite{hoff-thesis}.

\subsection*{Notation}

\begin{itemize}
    \item
    The rings in this text are always assumed to be commutative and unital.

    \item
    The symbols $\QQ$, $\RR$ and $\CC$ denote the fields of rational, real and complex numbers.
    We write $\Qbar$ for the algebraic closure of $\QQ$ inside $\CC$.

    \item
    We fix a rational prime $p$ not equal to $2$ and denote by $\Qp$, $\Zp$ and $\Fp$ the field of $p$-adic numbers, its ring of integers and its residue field.
    We fix an algebraic closure $\Qpbar$ of $\Qp$ with residue field $\Fpbar$ and an embedding $\Qbar \to \Qpbar$.

    \item
    Let $R$ be a $\Zp$-algebra.
    We write $W(R)$ for the associated ring of Witt vectors (that naturally is again a $\Zp$-algebra) and
    \[
        I_R \coloneqq \ker \roundbr[\big]{W(R) \to R} \subseteq W(R)
    \]
    for its augmentation ideal.
    We also write $\sigma \colon W(R) \to W(R)$ for the Frobenius and $\sigma^{\divd} \colon I_R^{\sigma} \to W(R)$ for the linearization of the inverse of the Verschiebung (that we also call the divided Frobenius because it satisfies $p \sigma^{\divd}(1 \otimes x) = \sigma(x)$ for $x \in I_R$).

    Let $n$ be a positive integer.
    We write $W_n(R)$ for the ring of $n$-truncated Witt vectors and $I_{n, R} \subseteq W_n(R)$ for its augmentation ideal.
    When $m$ is a second positive integer with $m \geq n + 1$ we again have a Frobenius $\sigma \colon W_m(R) \to W_n(R)$ and a divided Frobenius
    \[
        \sigma^{\divd} \colon W_n(R) \otimes_{\sigma, W_m(R)} I_{m, R} \to W_n(R).
    \]

    \item
    Let $R$ be a complete Noetherian local ring with perfect residue field $k$ (that is always assumed to be of characteristic $p$).
    We write $\What(R) \subseteq W(R)$ for the associated Zink ring (see \cite{zink-dieudonne}), $\Ihat_R \subseteq \What(R)$ for its augmentation ideal and $\sigma \colon \What(R) \to \What(R)$ and $\sigma^{\divd} \colon (\Ihat_R)^{\sigma} \to \What(R)$ for its Frobenius and divided Frobenius similarly as before.

    \item
    Let $\calG$ be a parahoric group scheme over $\Zp$ with generic fiber $G = \calG_{\Qp}$.
    We denote by $L^+ \calG$ the (Witt vector) positive loop group of $\calG$, i.e.\ the affine group scheme over $\Zp$ given by $(L^+ \calG)(R) = \calG(W(R))$.
    For a positive integer $n$ we also write $L^{(n)} \calG$ for the $n$-truncated positive loop group of $\calG$, i.e.\ the smooth affine group scheme over $\Zp$ given by $(L^{(n)} \calG)(R) = \calG(W_n(R))$.
    Finally we write $L^{(\onerdt)} \calG = \calG_{\Fp}^{\rdt}$ for the reductive quotient of the special fiber $\calG_{\Fp}$.

    We write $\bbL^+ \calG$ and $\bbL^{(n)} \calG$ for the perfection of the special fibers of $L^+ \calG$ and $L^{(n)} \calG$ respectively and we write $\bbL G$ for the loop groop, i.e.\ for the sheaf of groups on the category of perfect $\Fp$-algebras (that we equip with the flat topology) given by $(\bbL G)(R) \coloneqq G(W(R)[1/p])$.

    \item
    Let $\Lambda$ be a finite free $\Zp$-module of rank $h$ and let $\calG \subseteq \GL(\Lambda)$ be a smooth closed $\Zp$-subgroup scheme.
    Then by a $\calG$-structure on a finite projective $R$-module of rank $h$ (for some $\Zp$-algebra $R$) we mean a choice of reduction of the associated $\GL(\Lambda)$-torsor to a $\calG$-torsor.
\end{itemize}
    \addtocontents{toc}{\protect\setcounter{tocdepth}{2}}
    \section{Displays} \label{sec:displays}

Recall that classical Dieudonné theory gives a natural equivalence of categories
\[
    \curlybr[\big]{\text{$p$-divisible groups over $R$}}^{\op} \to \curlybr[\big]{\text{Dieudonné modules over $R$}}
\]
for any perfect ring $R$ of characteristic $p$.
Here a Dieudonné module is a tuple $(M, M_1, \Psi)$ consisting of
\begin{itemize}
    \item
    a finite projective $W(R)$-module $M$,
    
    \item
    a $W(R)$-submodule $M_1 \subseteq M$ with $p M \subseteq M_1 \subseteq M$ and such that $M_1/pM \subseteq M/pM$ is a direct summand, and
    
    \item
    an isomorphism of $W(R)$-modules $\Psi \colon M_1^{\sigma} \to M$.
\end{itemize}

The notion of a (not necessarily nilpotent) display, introduced by Zink in \cite{zink} and studied extensively by Lau and Zink, gives a generalization of the theory of Dieudonné modules to arbitrary $p$-complete base rings.
Just as in classical Dieudonné theory, there is a natural way to associate a display to a $p$-divisible group.
However, this construction typically does not yield an equivalence of categories anymore (although it does if one restricts to formal $p$-divisible groups).

If one considers only complete Noetherian local rings with perfect residue field of characteristic $p$ as base rings there exists a theory of Dieudonné displays, that refines the theory of (usual) displays.
There is again a natural way to associate a Dieudonné display to a $p$-divisible groups and Zink showed in \cite{zink-dieudonne} that this in fact does give an equivalence of categories.

In this chapter we recall the theory of displays and Dieudonné displays.
However we phrase it in a slightly non-standard way, similarly as in \cite{kisin-pappas}, so that the definition of a display looks almost like the definition of a Dieudonné module given above.
The equivalence of our definitions and the ones from \cite{zink} and \cite{zink-dieudonne} follows from the argument in \cite[Lemma 3.1.5]{kisin-pappas}.
We also introduce the new related notion of an $(m, n)$-truncated display (for positive integers $m$ and $n$ with $m \geq n + 1$) that is inspired by the definition of $(m, n)$-restricted local shtukas given by Xiao and Zhu in \cite{xiao-zhu}.
Our $(m, n)$-truncated displays are in some sense more naive versions of the $n$-truncated displays introduced by Lau and Zink in \cite{lau-truncated} and \cite{lau-zink-truncated}, but they are better suited for our purposes.

\begin{notation}
    We fix integers $0 \leq d \leq h$.
    The letter $R$ always denotes a $p$-complete ring, $m$ always denotes a positive integer and $(m, n)$ always denotes a tuple of positive integers with $m \geq n + 1$.
\end{notation}

\subsection{Pairs and displays}

\begin{definition} \label{def:pair}
    A \emph{pair (of type $(h, d)$) over $R$} is a tuple $(M, M_1)$ consisting of a finite projective $W(R)$-module $M$ (of rank $h$) and a $W(R)$-submodule $M_1 \subseteq M$ with $I_R M \subseteq M_1$ and such that $M_1/I_R M \subseteq M/I_R M$ is a direct summand (of rank $d$).

    An \emph{$m$-truncated pair over $R$} is a tuple $(M, M_1)$ consisting of a finite projective $W_m(R)$-module $M$ and a $W_m(R)$-submodule $M_1 \subseteq M$ with $I_{m, R} M \subseteq M_1$ and such that $M_1/I_{m, R} M \subseteq M/I_{m, R} M$ is a direct summand.

    Now let $R$ be a complete Noetherian local ring with perfect residue field (of characteristic $p$).
    A \emph{Dieudonné pair over $R$} is a tuple $(M, M_1)$ consisting of a finite projective $\What(R)$-module $M$ and a $\What(R)$-submodule $M_1 \subseteq M$ with $\Ihat_R M \subseteq M_1$ and such that $M_1/\Ihat_R M \subseteq M/\Ihat_R M$ is a direct summand.
\end{definition}

\begin{definition}
    Let $(M, M_1)$ and $(M', M'_1)$ be two pairs over $R$.
    Then a morphism $(M, M_1) \to (M', M'_1)$ is a morphism of $W(R)$-modules $f \colon M \to M'$ such that $f(M_1) \subseteq M'_1$.
    In the same way we also define morphisms of $m$-truncated pairs and Dieudonné pairs.
\end{definition}

\begin{remark} \label{rmk:base-changing-pairs}
    Let $(M, M_1)$ be a pair over $R$ and let $R \to R'$ be a morphism of $p$-complete rings.
    Then we can form the base change $(M', M'_1) = (M, M_1)_{R'}$ that is a pair over $R'$.
    It is characterized by
    \[
        M' = W(R') \otimes_{W(R)} M \quad \text{and} \quad M'_1/I_{R'} M' = R' \otimes_R (M_1/I_R M).
    \]
    Similarly we can also base change $m$-truncated pairs and Dieudonné pairs.
\end{remark}

\begin{remark}
    There are natural truncation functors
    \[
        \curlybr[\big]{\text{pairs over $R$}} \to \curlybr[\big]{\text{$m$-truncated pairs over $R$}}
    \]
    and
    \[
        \curlybr[\big]{\text{$m'$-truncated pairs over $R$}} \to \curlybr[\big]{\text{$m$-truncated pairs over $R$}}
    \]
    for $m \leq m'$.
    When $R$ is a complete Noetherian local ring with perfect residue field, then there is also a natural functor
    \[
        \curlybr[\big]{\text{Dieudonné pairs over $R$}} \to \curlybr[\big]{\text{pairs over $R$}}.
    \]
\end{remark}

\begin{remark} \label{rmk:normal-decomposition}
    Let $(M, M_1)$ be a pair over $R$.
    Then $(M, M_1)$ always has a \emph{normal decomposition $(L, T)$}, i.e.\ a direct sum decomposition $M = L \oplus T$ such that $M_1 = L \oplus I_R T$.
    Given a second pair $(M', M'_1)$ over $R$ with normal decomposition $(L', T')$, every morphism of pairs $f \colon (M, M_1) \to (M', M'_1)$ can be written in matrix form $f = \begin{psmallmatrix} a & b \\ c & d \end{psmallmatrix}$ with
    \[
        a \colon L \to L', \quad b \colon T \to L', \quad c \colon L \to I_R T', \quad d \colon T \to T'.
    \]
    The same is true for $m$-truncated pairs and Dieudonné pairs.
\end{remark}

At this point we would like to naively define a display over $R$ as a tuple $(M, M_1, \Psi)$ where $(M, M_1)$ is a pair over $R$ and $\Psi \colon M_1^{\sigma} \to M$ is an isomorphism of $W(R)$-modules.
However, the $W(R)$-module $M_1^{\sigma}$ is typically not finite projective, so this definition does not behave well.

The situation can be remedied by introducing a suitable replacement $\widetilde{M_1}$ of $M_1^{\sigma}$.

\begin{proposition} \label{prop:m-1-tilde}
    There are unique functors
    \[
    \begin{array}{r c l}
        \curlybr[\Big]{\text{pairs (of type $(h, d)$) over $R$}}
        & \to
        & \curlybr[\Bigg]{\begin{gathered}\text{finite projective $W(R)$-modules} \\ \text{(of rank $h$)}\end{gathered}}, \\[5mm]
        (M, M_1)
        & \mapsto
        & \widetilde{M_1}
    \end{array}
    \]
    together with natural isomorphisms $\widetilde{M_1}[1/p] \to M^{\sigma}[1/p]$ that are compatible with base change in $R$ and that in the case that $W(R)$ is $p$-torsionfree are given by
    \[
        \widetilde{M_1} = \im \roundbr[\big]{M_1^{\sigma} \to M^{\sigma}}.
    \]

    There also exist unique functors
    \[
    \begin{array}{r c l}
        \curlybr[\big]{\text{$m$-truncated pairs over $R$}}
        & \to
        & \curlybr[\big]{\text{finite projective $W_n(R)$-modules}}, \\[1mm]
        (M, M_1)
        & \mapsto
        & \widetilde{M_1}
    \end{array}
    \]
    that are compatible with base change in $R$ and with passing from pairs to $m$-truncated pairs (where we recall that $m$ and $n$ are positive integers with $m \geq n + 1$).

    When we restrict to complete Noetherian local rings $R$ with perfect residue field there also exist unique functors
    \[
    \begin{array}{r c l}
        \curlybr[\big]{\text{Dieudonné pairs over $R$}}
        & \to
        & \curlybr[\big]{\text{finite projective $\What(R)$-modules}}, \\[1mm]
        (M, M_1)
        & \mapsto
        & \widetilde{M_1}
    \end{array}
    \]
    together with natural isomorphisms $\widetilde{M_1}[1/p] \to M^{\sigma}[1/p]$ that are compatible with base change in $R$ and that in the case that $\What(R)$ is $p$-torsionfree are again given by the formula $\widetilde{M_1} = \im(M_1^{\sigma} \to M^{\sigma})$.
    This construction is compatible with passing from Dieudonné pairs to pairs.
\end{proposition}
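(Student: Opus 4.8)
The plan is to construct the functor explicitly by means of normal decompositions (Remark~\ref{rmk:normal-decomposition}) and the divided Frobenius, and then to check that the construction is independent of all choices. I would treat the three cases uniformly; write $A$ for whichever of the rings $W(R)$, $W_n(R)$ (with Frobenius $\sigma\colon W_m(R)\to W_n(R)$) or $\What(R)$ is relevant, and keep in mind the defining relation $p\,\sigma^{\divd}=\sigma|_{I}$. On objects I would fix a normal decomposition $M=L\oplus T$, so that $M_1=L\oplus I_RT$, and set $\widetilde{M_1}:=L^\sigma\oplus T^\sigma$; this is finite projective of rank $h$, being the base change along $\sigma$ of $L\oplus T$. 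I equip it with the map $\iota\colon\widetilde{M_1}\to M^\sigma=L^\sigma\oplus T^\sigma$ given in block form by $\operatorname{id}_{L^\sigma}\oplus(p\cdot\operatorname{id}_{T^\sigma})$; inverting $p$ turns $\iota$ into the required isomorphism $\widetilde{M_1}[1/p]\xrightarrow{\sim}M^\sigma[1/p]$. When $A$ is $p$-torsionfree one computes, using $\sigma(I_R)=pA$ (equivalently $p\,\sigma^{\divd}=\sigma|_I$), that $\im(M_1^\sigma\to M^\sigma)=L^\sigma\oplus pT^\sigma=\im(\iota)$, so that $\iota$ identifies $\widetilde{M_1}$ with $\im(M_1^\sigma\to M^\sigma)$, matching the prescribed formula.

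On morphisms, given $f\colon(M,M_1)\to(M',M'_1)$ written with respect to chosen normal decompositions as $f=\begin{psmallmatrix}a&b\\c&d\end{psmallmatrix}$ with $c\colon L\to I_RT'$, I would define $\widetilde f\colon\widetilde{M_1}\to\widetilde{M'_1}$ by $\begin{psmallmatrix}a^\sigma&p\,b^\sigma\\ \gamma&d^\sigma\end{psmallmatrix}$, where $\gamma:=(\sigma^{\divd}\otimes\operatorname{id}_{T'^\sigma})\circ c^\sigma$ under the identification $(I_RT')^\sigma=I_R^\sigma\otimes_A T'^\sigma$ (valid since $T'$ is flat). The map $\gamma$ is well defined precisely because $c$ lands in $I_RT'$, and by the relation $p\,\sigma^{\divd}=\sigma|_I$ one has that $p\gamma$ equals the $T'^\sigma$-component of $f^\sigma$. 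A direct block computation then gives $\iota'\circ\widetilde f=f^\sigma\circ\iota$, so that $\widetilde f$ is the unique integral lift of the map induced by $f^\sigma$ after inverting $p$.

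It remains to show that this data assembles into a well-defined functor, and this is where I expect the main obstacle to lie. Functoriality ($\widetilde{g\circ f}=\widetilde g\circ\widetilde f$) and independence of the chosen normal decompositions are checked by the same mechanism: since $\iota$ becomes an isomorphism after inverting $p$ and $\widetilde{(-)}$ is characterized by $\iota'\widetilde f=f^\sigma\iota$, the desired identities hold after $[1/p]$ by functoriality of $(-)^\sigma$, and one then verifies that the relevant maps are already integral, which is exactly where the identities satisfied by $\sigma^{\divd}$ enter. Independence of the normal decomposition follows by applying functoriality to the identity morphism expressed in two decompositions, yielding canonical transition isomorphisms satisfying the cocycle condition; descending along these glues the local modules $L^\sigma\oplus T^\sigma$ into a module $\widetilde{M_1}$ depending only on $(M,M_1)$. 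Compatibility with base change in $R$ and with the truncation and Dieudonné-to-pair functors is then immediate from the explicit formulas, since everything is built from $\otimes$, $\sigma$ and $\sigma^{\divd}$, all of which commute with these operations; the truncated case is identical with $A=W_n(R)$.

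Finally, for uniqueness I would argue that any functor as in the statement exhibits $\widetilde{M_1}$, via its isomorphism to $M^\sigma[1/p]$, as a specific $A$-lattice in $M^\sigma[1/p]$. By compatibility with base change it suffices to determine this lattice for the standard free pairs $(A^h,\,A^d\oplus I_R^{\,h-d})$, which are base-changed from the corresponding $p$-torsionfree standard pair over $\Zp$ (respectively its Zink-ring analogue); there the prescribed image formula forces the lattice. Since every pair is, locally on $R$ and up to adding a direct summand, of this standard form, naturality propagates the identification to all pairs, giving uniqueness up to unique isomorphism. In the $m$-truncated case, where no $[1/p]$ is available, the same reduction instead uses compatibility with passing from pairs to $m$-truncated pairs.
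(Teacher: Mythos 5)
Your construction coincides exactly with the paper's proof: the paper also fixes a normal decomposition $(L,T)$, sets $\widetilde{M_1} \coloneqq L^{\sigma}\oplus T^{\sigma}$, defines $\widetilde{f}$ by the matrix $\begin{psmallmatrix} a^{\sigma} & p b^{\sigma} \\ \dot{c} & d^{\sigma} \end{psmallmatrix}$ (your $\gamma$ is the paper's $\dot{c}$), and takes $l+t\mapsto l+pt$ as the isomorphism after inverting $p$. The gap lies in the step you yourself identify as the main obstacle: your mechanism for verifying functoriality and independence of the normal decomposition is not valid in the generality of the statement. You propose to characterize $\widetilde{f}$ by $\iota'\circ\widetilde{f}=f^{\sigma}\circ\iota$, deduce the identities after inverting $p$, and conclude by integrality. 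This only works when $W(R)$ is $p$-torsionfree. In general $W(R)$ has $p$-torsion — e.g.\ for $R=\Fp[\epsilon]/(\epsilon^{2})$ one has $p\cdot[\epsilon]=V([\epsilon^{p}])=0$ with $[\epsilon]\neq 0$ — and then (i) $\iota'=\id\oplus p$ is not injective, so the equation $\iota'\circ\widetilde{f}=f^{\sigma}\circ\iota$ does not determine $\widetilde{f}$ (one may add any map into $\ker(p\colon T'^{\sigma}\to T'^{\sigma})$), and (ii) $\Hom_{W(R)}(\widetilde{M_1},\widetilde{M''_1})$ does not inject into its $p$-localization, so two integral maps agreeing after inverting $p$ need not agree. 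This is not a peripheral case: the entire point of the proposition is to produce $\widetilde{M_1}$ when $W(R)$ is \emph{not} $p$-torsionfree (otherwise one would simply take the image $\im(M_1^{\sigma}\to M^{\sigma})$). Moreover, for $m$-truncated pairs the statement provides no map to $M^{\sigma}[1/p]$ at all, so your mechanism is not merely insufficient there but unavailable; the remark that "the truncated case is identical" glosses over exactly this.

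The repair is either of the following, the second being implicit in your own uniqueness paragraph. (a) Verify the identities integrally by direct matrix computation: for $\widetilde{g\circ f}=\widetilde{g}\circ\widetilde{f}$, each of the four entries reduces to the linearized relation $p\,\sigma^{\divd}(1\otimes x)=\sigma(x)$ for $x\in I_R$ together with naturality of $\sigma^{\divd}\otimes\id$ in the $T$-variable; this works over arbitrary $p$-complete $R$ and verbatim in the truncated and Dieudonné settings, and it is what the paper's "one can now check" amounts to. (b) Reduce to a universal case: a morphism of pairs with chosen free normal decompositions is pulled back, compatibly with all constructions, from a universal such morphism over a $p$-completed polynomial $\Zp$-algebra, whose ring of Witt vectors is $p$-torsionfree; there your $[1/p]$-argument is legitimate, and the identities descend by base-change compatibility. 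With either patch your formulas are correct and the rest of your outline (gluing over normal decompositions, base-change and truncation compatibilities, uniqueness via standard pairs) goes through.
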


\begin{proof}
    Let $(M, M_1)$ be a pair over $R$ and choose a normal decomposition $(L, T)$, see \Cref{rmk:normal-decomposition}.
    We then define
    \[
        \widetilde{M_1} \coloneqq L^{\sigma} \oplus T^{\sigma}.
    \]
    Given a second pair $(M', M'_1)$ with normal decomposition $(L', T')$, a morphism of pairs
    \[
        f \colon (M, M_1) \to (M', M'_1)
    \]
    can be written in matrix form $f = \begin{psmallmatrix} a & b \\ c & d \end{psmallmatrix}$ with
    \[
        a \colon L \to L', \quad b \colon T \to L', \quad c \colon L \to I_R T', \quad d \colon T \to T'.
    \]
    We then define $\widetilde{f} \colon \widetilde{M_1} \to \widetilde{M'_1}$ by the matrix $\begin{psmallmatrix} a^{\sigma} & p b^{\sigma} \\ \dot{c} & d^{\sigma} \end{psmallmatrix}$ where $\dot{c}$ denotes the composition
    \[
        \dot{c} \colon L^{\sigma} \xrightarrow{c^{\sigma}} I_R^{\sigma} \otimes_{W(R)} T'^{\sigma} \xrightarrow{\sigma^{\divd} \otimes \id} T'^{\sigma}.
    \]
    Finally we define the natural isomorphism
    \[
        \widetilde{M_1} \squarebr[\big]{1/p} = L^{\sigma}[1/p] \oplus T^{\sigma}[1/p] \to M^{\sigma}[1/p] = L^{\sigma}[1/p] \oplus T^{\sigma}[1/p], \qquad l + t \mapsto l + p t.
    \]
    One can now check that this is well-defined and has the required properties.

    For $m$-truncated pairs the construction of $(M, M_1) \mapsto \widetilde{M_1}$ is essentially the same as for pairs; here
    \[
        \dot{c} \colon W_n(R) \otimes_{\sigma, W_m(R)} L \to W_n(R) \otimes_{\sigma, W_m(R)} T'
    \]
    is given as the composition
    \begin{gather*}
        W_n(R) \otimes_{\sigma, W_m(R)} L \xrightarrow{c^{\sigma}} \roundbr[\big]{W_n(R) \otimes_{\sigma, W_m(R)} I_{m, R}} \otimes_{W_n(R)} \roundbr[\big]{W_n(R) \otimes_{\sigma, W_m(R)} T'} \\
        \xrightarrow{\sigma^{\divd} \otimes \id} W_n(R) \otimes_{\sigma, W_m(R)} T'.
    \end{gather*}

    Finally, for Dieudonné pairs the construction of $(M, M_1) \mapsto \widetilde{M_1}$ is exactly the same as for pairs.
\end{proof}

\begin{remark}\label{rmk:m-1-tilde-m-1-sigma}
    Let $(M, M_1)$ be a pair over $R$.
    Then there is a natural surjective morphism of $W(R)$-modules $M_1^{\sigma} \to \widetilde{M_1}$.
    Typically this morphism is not an isomorphism but it is when $R$ is a perfect $\Fp$-algebra.
\end{remark}

\begin{remark} \label{rmk:pairs-inclusions}
    For a pair $(M, M_1)$ over $R$ the isomorphism $\widetilde{M_1}[1/p] \to M^{\sigma}[1/p]$ can be refined to natural morphisms
    \[
        M^{\sigma} \to \widetilde{M_1} \quad \text{and} \quad \widetilde{M_1} \to M^{\sigma}
    \]
    coming from the inclusions $p M^{\sigma} \subseteq \widetilde{M_1} \subseteq M^{\sigma}$ in the case that $W(R)$ is $p$-torsionfree.
    The same remark also applies to Dieudonné pairs.
\end{remark}

\begin{definition} \label{def:display}
    A \emph{display over $R$} is a tuple $(M, M_1, \Psi)$ where $(M, M_1)$ is a pair over $R$ and $\Psi \colon \widetilde{M_1} \to M$ is an isomorphism of $W(R)$-modules.

    Similarly, an \emph{$(m, n)$-truncated display over $R$} is a tuple $(M, M_1, \Psi)$ where $(M, M_1)$ is an $m$-truncated pair over $R$ and $\Psi \colon \widetilde{M_1} \to W_n(R) \otimes_{W_m(R)} M$ is an isomorphism of $W_n(R)$-modules.

    Now let $R$ be a complete Noetherian local ring with perfect residue field.
    A \emph{Dieudonné display over $R$} is a tuple $(M, M_1, \Psi)$ where $(M, M_1)$ is a Dieudonné pair over $R$ and $\Psi \colon \widetilde{M_1} \to M$ is an isomorphism of $\What(R)$-modules.
\end{definition}

\begin{definition} \label{def:display-frobenius}
    Let $(M, M_1, \Psi)$ be a display over $R$.
    Then the \emph{Frobenius of $(M, M_1, \Psi)$} is the composition
    \[
        \Phi \colon M^{\sigma} \to \widetilde{M_1} \xrightarrow{\Psi} M,
    \]
    where $M^{\sigma} \to \widetilde{M_1}$ is the morphism from \Cref{rmk:pairs-inclusions}.
    Note that $\Phi$ becomes an isomorphism after inverting $p$.

    We make the same definition for Dieudonné displays.
\end{definition}

\subsection{Duals and twists} \label{sec:duals-twists}

\begin{definition} \label{def:dual-pairs}
    Let $(M, M_1)$ be a pair over $R$.
    Then we define its \emph{dual}
    \[
        (M, M_1)^{\vee} \coloneqq \roundbr[\big]{M^{\vee}, M_1^{\ast}}
    \]
    as follows.
    \begin{itemize}
        \item
        $M^{\vee} = \Hom_{W(R)}(M, W(R))$ is the dual of the finite projective $W(R)$-module $M$.

        \item
        $M_1^{\ast} \subseteq M$ is the $W(R)$-submodule of all $\omega \colon M \to W(R)$ such that $\omega(M_1) \subseteq I_R$.
        Equivalently it is the preimage under
        \[
            M^{\vee} \to M^{\vee}/I_R M^{\vee} \cong (M/I_R M)^{\vee}
        \]
        of the orthogonal complement $(M_1/I_R M)^{\perp} \subseteq (M/I_R M)^{\vee}$.
    \end{itemize}
    This endows the category of pairs over $R$ with a duality.
    We similarly define duals of $m$-truncated pairs and Dieudonné pairs.
\end{definition}

\begin{remark}
    Note that if $(M, M_1)$ is a pair of type $(h, d)$ over $R$ then $(M, M_1)^{\vee}$ is of type $(h, h - d)$.
\end{remark}

\begin{lemma} \label{lem:m-1-tilde-duality}
    Given a pair $(M, M_1)$ over $R$ there exists a unique natural isomorphism
    \[
        \widetilde{M_1^{\ast}} \to \widetilde{M_1}^{\vee}
    \]
    that is compatible with base change in $R$ and makes the diagram
    \[
    \begin{tikzcd}
        \widetilde{M_1^{\ast}} \squarebr[\big]{1/p} \ar[r] \ar[d]
        & \widetilde{M_1}^{\vee} \squarebr[\big]{1/p} \ar[d]
        \\
        M^{\sigma, \vee}[1/p] \ar[r, "p^{-1}"]
        & M^{\sigma, \vee}[1/p]
    \end{tikzcd}
    \]
    commutative.
    The same is true for Dieudonné pairs.

    Given an $m$-truncated pair $(M, M_1)$ over $R$ there exists a unique natural isomorphism
    \[
        \widetilde{M_1^{\ast}} \to \widetilde{M_1}^{\vee}
    \]
    that is compatible with base change in $R$ and with passing from pairs to $m$-truncated pairs.

    In other words, the various functors $(M, M_1) \mapsto \widetilde{M_1}$ from \Cref{prop:m-1-tilde} are compatible with dualities.
\end{lemma}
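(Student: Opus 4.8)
The plan is to construct the isomorphism from a normal decomposition and then to check its characterizing properties after reducing to the case where the relevant Witt ring is $p$-torsionfree, exactly as in the construction of the functors $(M, M_1) \mapsto \widetilde{M_1}$ themselves. Recall from \Cref{prop:m-1-tilde} that over a base with $W(R)$ (resp.\ $\What(R)$) $p$-torsionfree one has the concrete description $\widetilde{M_1} = \im(M_1^{\sigma} \to M^{\sigma})$, together with the inclusions $p M^{\sigma} \subseteq \widetilde{M_1} \subseteq M^{\sigma}$ of \Cref{rmk:pairs-inclusions}. Since the duality of \Cref{def:dual-pairs}, the functors $\widetilde{(\blank)}$ and the formation of module duals are all compatible with base change in $R$, and since the asserted square pins the map down after inverting $p$, it will suffice to produce a canonical map over $p$-torsionfree bases and to extend it by base change; this is also what forces uniqueness, because over such bases $\widetilde{M_1^{\ast}}$ and $\widetilde{M_1}^{\vee}$ are finite projective and therefore embed into their localizations at $p$.

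First I would compute the dual of a normal decomposition. Fix a normal decomposition $(L, T)$ of $(M, M_1)$, see \Cref{rmk:normal-decomposition}, so that $M = L \oplus T$ and $M_1 = L \oplus I_R T$. Splitting $M^{\vee} = L^{\vee} \oplus T^{\vee}$ accordingly and using that every $\omega \in T^{\vee}$ automatically satisfies $\omega(I_R T) \subseteq I_R$, one finds that $\omega = \omega_L + \omega_T \in M_1^{\ast}$ if and only if $\omega_L(L) \subseteq I_R$, i.e.\ $M_1^{\ast} = I_R L^{\vee} \oplus T^{\vee}$ (here I use that $L$ is finite projective, so $\Hom_{W(R)}(L, I_R) = I_R L^{\vee}$). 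Hence $(T^{\vee}, L^{\vee})$ is a normal decomposition of the dual pair $(M^{\vee}, M_1^{\ast})$. Feeding this into \Cref{prop:m-1-tilde} gives $\widetilde{M_1} = L^{\sigma} \oplus T^{\sigma}$ and $\widetilde{M_1^{\ast}} = (T^{\vee})^{\sigma} \oplus (L^{\vee})^{\sigma}$, while $\widetilde{M_1}^{\vee} = (L^{\sigma})^{\vee} \oplus (T^{\sigma})^{\vee}$. Since duality commutes with the base change $\sigma$ for finite projective modules, both $\widetilde{M_1^{\ast}}$ and $\widetilde{M_1}^{\vee}$ are canonically identified with $L^{\sigma, \vee} \oplus T^{\sigma, \vee}$, and I define the isomorphism $\widetilde{M_1^{\ast}} \to \widetilde{M_1}^{\vee}$ to be the resulting regrouping of summands.

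It remains to verify that this regrouping makes the displayed square commute and is independent of the chosen decomposition. For the square I would simply track the two composites in the coordinates above. In these coordinates the isomorphism $\widetilde{M_1}[1/p] \to M^{\sigma}[1/p]$ of \Cref{prop:m-1-tilde} is $\mathrm{diag}(1, p)$, so the right vertical map of the square, the induced isomorphism on duals in the direction $\widetilde{M_1}^{\vee} \to M^{\sigma, \vee}$, is $\mathrm{diag}(1, p^{-1})$; the left vertical map is the analogous isomorphism for the dual pair, sending $(a, b)$ to $a + p b$ with $a \in (T^{\vee})^{\sigma}$ and $b \in (L^{\vee})^{\sigma}$; and the bottom map is multiplication by $p^{-1}$. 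Going down and then right sends $(a, b)$ to $(b, p^{-1} a)$, so commutativity forces the top map to be the swap $(a, b) \mapsto (b, a)$, which is exactly the regrouping isomorphism; thus the square commutes, and the mismatch between $\mathrm{diag}(1, p)$ on the two sides is precisely what produces the factor $p^{-1}$. The main obstacle is the independence of the normal decomposition (equivalently, naturality): I expect to handle this not through the change-of-decomposition matrix calculus of \Cref{rmk:normal-decomposition}, but through the reduction of the first paragraph, since over a $p$-torsionfree base the map is the canonical one induced by the inclusions $p M^{\sigma} \subseteq \widetilde{M_1} \subseteq M^{\sigma}$ together with $p^{-1}$ inside $M^{\sigma, \vee}[1/p]$, hence manifestly independent of all choices, and the general case follows by base change.

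Finally, the Dieudonné case is identical after replacing $W(R)$ by the Zink ring $\What(R)$ and $I_R$ by $\Ihat_R$ throughout, using that $\What(R)$ is $p$-torsionfree in the relevant cases so that the same reduction applies. In the $m$-truncated case there is no localization at $p$ and hence no square to check; instead the asserted compatibility with passing from pairs to $m$-truncated pairs characterizes the isomorphism, and the construction by the regrouping of a normal decomposition is literally the truncation of the one above, so its well-definedness and uniqueness descend from the non-truncated statement.
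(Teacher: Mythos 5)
Your proposal is correct and takes essentially the same route as the paper: its proof likewise observes that $(T^{\vee}, L^{\vee})$ is a normal decomposition of $(M^{\vee}, M_1^{\ast})$ and defines the isomorphism as the regrouping $\widetilde{M_1^{\ast}} \cong T^{\vee, \sigma} \oplus L^{\vee, \sigma} \cong (L^{\sigma} \oplus T^{\sigma})^{\vee} \cong \widetilde{M_1}^{\vee}$. The additional verifications you supply (the coordinate check of the square, and uniqueness/naturality via reduction to bases with $p$-torsionfree Witt rings) are left implicit in the paper, consistent with how it treats \Cref{prop:m-1-tilde}.
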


\begin{proof}
    Let $(L, T)$ be a normal decomposition of $(M, M_1)$.
    Then $(T^{\vee}, L^{\vee})$ is a normal decomposition of $(M^{\vee}, M_1^{\ast})$.
    We then define the desired natural isomorphism to be the composition
    \[
        \widetilde{M_1^{\ast}} \cong T^{\vee, \sigma} \oplus L^{\vee, \sigma} \cong \roundbr[\big]{L^{\sigma} \oplus T^{\sigma}}^{\vee} \cong \widetilde{M_1}^{\vee}. \qedhere
    \]
\end{proof}

\begin{definition} \label{def:dual-displays}
    Let $(M, M_1, \Psi)$ be a display over $R$.
    Then we define its \emph{dual}
    \[
        (M, M_1, \Psi)^{\vee} \coloneqq \roundbr[\big]{M^{\vee}, M_1^{\ast}, \Psi^{\vee, -1}},
    \]
    where $\Psi^{\vee, -1}$ really denotes the composition
    \[
        \widetilde{M_1^{\ast}} \to \widetilde{M_1}^{\vee} \xrightarrow{\Psi^{\vee, -1}} M^{\vee},
    \]
    the first isomorphism being the one from \Cref{lem:m-1-tilde-duality}.
    This endows the category of pairs over $R$ with a duality.
    We similarly define duals of $(m, n)$-truncated displays and Dieudonné displays.
\end{definition}

\begin{definition} \label{def:twist-pairs}
    Let $(M, M_1)$ be a pair over $R$ and let $I$ be an invertible $W(R)$-module.
    Then we define the \emph{twist}
    \[
        I \otimes (M, M_1) \coloneqq (I \otimes_{W(R)} M, I \otimes_{W(R)} M_1).
    \]
    This endows the category of pairs over $R$ with an action of the symmetric monoidal category of invertible $W(R)$-modules and this action is compatible with the duality from \Cref{def:dual-pairs}.
    We similarly define twists of $m$-truncated displays and Dieudonné displays.
\end{definition}

\begin{lemma} \label{lem:m-1-tilde-action}
    Given a pair $(M, M_1)$ over $R$ and an invertible $W(R)$-module $I$ there exists a unique natural isomorphism
    \[
        \roundbr[\big]{I \otimes_{W(R)} M_1}^{\sim} \cong I^{\sigma} \otimes_{W(R)} \widetilde{M_1}
    \]
    that is compatible with base change in $R$ and makes the diagram
    \[
    \begin{tikzcd}
        (I \otimes_{W(R)} M_1)^{\sim} \squarebr[\big]{1/p} \ar[r] \ar[d]
        & \roundbr[\big]{I^{\sigma} \otimes_{W(R)} \widetilde{M_1}} \squarebr[\big]{1/p} \ar[d]
        \\
        (I \otimes_{W(R)} M)^{\sigma}[1/p] \ar[r]
        & I^{\sigma}[1/p] \otimes_{W(R)[1/p]} M^{\sigma}[1/p]
    \end{tikzcd}
    \]
    commutative.
    The same is true for Dieudonné pairs.

    Given an $m$-truncated pair $(M, M_1)$ over $R$ and an invertible $W_m(R)$-module $I$ there exists a unique natural isomorphism
    \[
        (I \otimes_{W_m(R)} M_1)^{\sim} \cong (W_n(R) \otimes_{\sigma, W_m(R)} I) \otimes_{W_n(R)} \widetilde{M_1}.
    \]
    that is compatible with base change in $R$ and with passing from pairs to $m$-truncated pairs.

    In other words, the various functors $(M, M_1) \mapsto \widetilde{M_1}$ from \Cref{prop:m-1-tilde} are equivariant with respect to the functors of symmetric monoidal categories
    \[
    \begin{array}{r c l}
        \curlybr[\big]{\text{invertible $W(R)$-modules}}
        & \to
        & \curlybr[\big]{\text{invertible $W(R)$-modules}}, \\[1 mm]
        I
        & \mapsto
        & I^{\sigma},
    \end{array}
    \]
    \[
    \begin{array}{r c l}
        \curlybr[\big]{\text{invertible $W_m(R)$-modules}}
        & \to
        & \curlybr[\big]{\text{invertible $W_n(R)$-modules}}, \\[1 mm]
        I
        & \mapsto
        & W_n(R) \otimes_{\sigma, W_m(R)} I,
    \end{array}
    \]
    and, when $R$ is a complete Noetherian local ring with residue field $\Fpbar$,
    \[
    \begin{array}{r c l}
        \curlybr[\big]{\text{invertible $\What(R)$-modules}}
        & \to
        & \curlybr[\big]{\text{invertible $\What(R)$-modules}}, \\[1 mm]
        I
        & \mapsto
        & I^{\sigma}.
    \end{array}
    \]
\end{lemma}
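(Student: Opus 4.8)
The plan is to reduce everything to the explicit description of $\widetilde{M_1}$ in terms of a normal decomposition, exactly as in the proof of \Cref{lem:m-1-tilde-duality}. First I would choose a normal decomposition $(L, T)$ of the pair $(M, M_1)$, so that $M = L \oplus T$ and $M_1 = L \oplus I_R T$. The key observation is that tensoring with an invertible module is compatible with normal decompositions: if $I$ is an invertible $W(R)$-module, then $(I \otimes_{W(R)} L,\, I \otimes_{W(R)} T)$ is a normal decomposition of the twist $I \otimes (M, M_1)$, because $I \otimes_{W(R)} M_1 = (I \otimes L) \oplus (I \otimes I_R T) = (I \otimes L) \oplus I_R(I \otimes T)$ using that $I \otimes_{W(R)} I_R = I_R \otimes_{W(R)} I$ as submodules of $I \otimes M$ (here $I$ is locally free, so tensoring is exact and preserves the inclusion $I_R T \subseteq T$). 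With this normal decomposition in hand, the definition from \Cref{prop:m-1-tilde} gives
\[
    (I \otimes_{W(R)} M_1)^{\sim} = (I \otimes_{W(R)} L)^{\sigma} \oplus (I \otimes_{W(R)} T)^{\sigma}.
\]

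Next I would use that Frobenius base change is monoidal, i.e.\ for any $W(R)$-module $N$ there is a canonical isomorphism $(I \otimes_{W(R)} N)^{\sigma} \cong I^{\sigma} \otimes_{W(R)} N^{\sigma}$, which is just the statement that $(-)^{\sigma} = W(R) \otimes_{\sigma, W(R)} (-)$ commutes with tensor products. Applying this to $N = L$ and $N = T$ yields
\[
    (I \otimes_{W(R)} M_1)^{\sim} \cong \roundbr[\big]{I^{\sigma} \otimes_{W(R)} L^{\sigma}} \oplus \roundbr[\big]{I^{\sigma} \otimes_{W(R)} T^{\sigma}} \cong I^{\sigma} \otimes_{W(R)} \roundbr[\big]{L^{\sigma} \oplus T^{\sigma}} = I^{\sigma} \otimes_{W(R)} \widetilde{M_1},
\]
which is the desired isomorphism. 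The compatibility with the isomorphism to $(I \otimes M)^{\sigma}[1/p]$ making the square commute is checked on the summands: on $I^{\sigma} \otimes L^{\sigma}$ the map is the identity and on $I^{\sigma} \otimes T^{\sigma}$ it is multiplication by $p$, matching the description $l + t \mapsto l + pt$ of the natural map $\widetilde{M_1}[1/p] \to M^{\sigma}[1/p]$ from \Cref{prop:m-1-tilde} tensored with $I^{\sigma}$.

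For the remaining two parts the argument is formally identical. For $m$-truncated pairs one uses the analogous normal decomposition together with the monoidality of the functor $I \mapsto W_n(R) \otimes_{\sigma, W_m(R)} I$, and for Dieudonné pairs one replaces $W(R)$ by $\What(R)$ throughout; in both cases the construction of $\widetilde{M_1}$ on a normal decomposition is the same, so no new input is needed. Finally, uniqueness follows from the stated compatibilities: after inverting $p$ (respectively after reducing to the case of a perfect residue field via \Cref{rmk:m-1-tilde-m-1-sigma} in the truncated setting) the isomorphism is forced, and the functors are determined on all pairs by base change from a universal situation, so any two natural isomorphisms satisfying the compatibilities agree.

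The main obstacle I anticipate is not any single computation but rather the bookkeeping needed to verify that the isomorphism constructed via a normal decomposition is genuinely \emph{independent} of the chosen decomposition, i.e.\ that it is natural. The clean way to handle this is to observe that any two normal decompositions are related by a morphism of pairs of the form $\begin{psmallmatrix} \id & b \\ c & \id \end{psmallmatrix}$, and then to invoke the functoriality of $(M, M_1) \mapsto \widetilde{M_1}$ established in \Cref{prop:m-1-tilde} together with the commutative square characterizing it after inverting $p$; since an isomorphism of finite projective modules compatible with the inclusion into $M^{\sigma}[1/p]$ is unique when it exists, the square pins down the map uniquely and naturality is automatic. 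This is precisely the same mechanism that makes \Cref{lem:m-1-tilde-duality} independent of choices, so I would simply refer to that pattern rather than repeat it.
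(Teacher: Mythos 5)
Your proposal is correct and follows essentially the same route as the paper: choose a normal decomposition $(L, T)$, note that $(I \otimes_{W(R)} L, I \otimes_{W(R)} T)$ is a normal decomposition of the twist, and obtain the isomorphism as the composite $(I \otimes_{W(R)} M_1)^{\sim} \cong (I \otimes_{W(R)} L)^{\sigma} \oplus (I \otimes_{W(R)} T)^{\sigma} \cong I^{\sigma} \otimes_{W(R)} \widetilde{M_1}$. The extra care you take with independence of the chosen decomposition and with the commutativity of the square after inverting $p$ is exactly the verification the paper leaves implicit, so nothing in your argument diverges from or adds a gap relative to the paper's proof.
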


\begin{proof}
    Let $(L, T)$ be a normal decomposition of $(M, M_1)$.
    Then $(I \otimes_{W(R)} L, I \otimes_{W(R)} T)$ is a normal decomposition of $I \otimes (M, M_1)$.
    We then define the desired natural isomorphism to be the composition
    \[
        (L \otimes_{W(R)} M_1)^{\sim} \cong (I \otimes_{W(R)} L)^{\sigma} \oplus (I \otimes_{W(R)} T)^{\sigma} \cong I^{\sigma} \otimes_{W(R)} (L^{\sigma} \oplus T^{\sigma}) \cong I^{\sigma} \otimes_{W(R)} \widetilde{M_1}. \qedhere
    \]
\end{proof}

\begin{definition} \label{def:twist-displays}
    Let $(M, M_1, \Psi)$ be a display over $R$ and let $(I, \iota)$ be a tuple consisting of an invertible $W(R)$-module $I$ and an isomorphism $\iota \colon I^{\sigma} \to I$.
    Then we define the twist
    \[
        (I, \iota) \otimes (M, M_1, \Psi) \coloneqq (I \otimes_{W(R)} M, I \otimes_{W(R)} M_1, \iota \otimes \Psi),
    \]
    where $\iota \otimes \Psi$ really denotes the composition
    \[
        (I \otimes_{W(R)} M_1)^{\sim} \to I^{\sigma} \otimes_{W(R)} \widetilde{M_1} \xrightarrow{\iota \otimes \Psi} M,
    \]
    the first isomorphism being the one from \Cref{lem:m-1-tilde-action}.
    This endows the category of displays over $R$ with an action of the symmetric monoidal category of tuples $(I, \iota)$ as above and this action is compatible with the duality from \Cref{def:dual-displays}.

    We similarly define twists of $(m, n)$-truncated displays and Dieudonné displays.
    For $(m, n)$-truncated displays over $R$ the twists are by tuples $(I, \iota)$ consisting of an invertible $W_m(R)$-module $I$ and an isomorphism $\iota \colon W_n(R) \otimes_{\sigma, W_m(R)} I \to I$.
\end{definition}

\subsection{Grothendieck-Messing Theory for Dieudonné displays}

\begin{notation} \label{not:grothendieck-messing}
    In this subsection $S \to R$ always denotes a surjection of complete Noetherian local rings with perfect residue field such that its kernel $\fraka \subseteq S$ is equipped with nilpotent divided powers that are compatible with the canonical divided powers on $p S \subseteq S$ and continuous in the sense that there exists a fundamental system of open ideals $\frakb \subseteq S$ such that the divided power structure on $S$ extends to the quotient $S/\frakb$.
\end{notation}

\begin{definition} \label{def:dieudonne-pair-grothendieck-messing}
    A \emph{Dieudonné pair for $S/R$} is a tuple $(M, M_1)$ consisting of a finite free $\What(S)$-module $M$ and a $\What(R)$-submodule $M_1 \subseteq \What(R) \otimes_{\What(S)} M$ such that $(\What(R) \otimes_{\What(S)} M, M_1)$ is a Dieudonné pair over $R$.
\end{definition}

\begin{lemma} \label{lem:m-1-grothendieck-messing}
    The functor
    \[
    \begin{array}{r c l}
        \curlybr[\big]{\text{Dieudonné pairs over $S$}}
        & \to
        & \curlybr[\big]{\text{finite free $\What(S)$-modules}}, \\[1mm]
        (M, M_1)
        & \mapsto
        & \widetilde{M_1}
    \end{array}
    \]
    admits a natural factorization over the category of Dieudonné pairs for $S/R$.
    We denote the induced functor again by
    \[
    \begin{array}{r c l}
        \curlybr[\big]{\text{Dieudonné pairs for $S/R$}}
        & \to
        & \curlybr[\big]{\text{finite free $\What(S)$-modules}}, \\[1mm]
        (M, M_1)
        & \mapsto
        & \widetilde{M_1}.
    \end{array}
    \]
    This construction is compatible with base change in $S/R$.
\end{lemma}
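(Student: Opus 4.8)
The plan is to construct the factoring functor by hand and to recognize the desired factorization as an instance of the crystalline nature of $\widetilde{(\blank)}$. First I would record the obvious forgetful functor $\curlybr[\big]{\text{Dieudonné pairs over } S} \to \curlybr[\big]{\text{Dieudonné pairs for } S/R}$ sending $(M, M_1)$ to the tuple $(M, \overline{M_1})$, where $\overline{M_1} \subseteq \What(R) \otimes_{\What(S)} M$ denotes the image of $M_1$; one checks directly from \Cref{def:dieudonne-pair-grothendieck-messing} that the target is a Dieudonné pair for $S/R$. To factor $\widetilde{(\blank)}$ through this functor I would, given a Dieudonné pair $(M, M_1)$ for $S/R$, lift the Hodge filtration: the direct summand $M_1/\Ihat_R(\What(R) \otimes_{\What(S)} M)$ of $R \otimes_{\What(S)} M$ lifts to a direct summand of $S \otimes_{\What(S)} M$ (since $\fraka$ is a nil ideal and $S$ is complete local), and after lifting the corresponding idempotent along $\What(S) \to S$ this produces a normal decomposition $(L, T)$ of an honest Dieudonné pair $(M, L \oplus \Ihat_S T)$ over $S$ restricting to $(M, M_1)$. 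I would then set $\bar{G}(M, M_1) \coloneqq L^{\sigma} \oplus T^{\sigma} = \widetilde{L \oplus \Ihat_S T}$. Everything then reduces to showing that this is independent of the chosen lift, which is the Grothendieck--Messing input.

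For the independence I would first treat the case that $\What(S)$ is $p$-torsionfree. Here the key point is the standard computation of Zink that the divided powers on $\fraka$ force the Frobenius to be divisible by $p$ on $J \coloneqq \ker(\What(S) \to \What(R))$, i.e.\ $\sigma(J) \subseteq p \What(S)$; this is where the nilpotence, compatibility and continuity hypotheses of \Cref{not:grothendieck-messing} all enter. Consequently, for $x \in J$ and $m \in M$ one has $1 \otimes (xm) = \sigma(x) \otimes m \in p M^{\sigma}$, so the image of $(JM)^{\sigma}$ in $M^{\sigma}$ lies in $p M^{\sigma}$. Writing $N \coloneqq M_1^S + JM$ for the full preimage in $M$ of the filtration $M_1$ attached to a lift $M_1^S = L \oplus \Ihat_S T$, and using \Cref{prop:m-1-tilde} together with the inclusions $p M^{\sigma} \subseteq \widetilde{M_1^S} \subseteq M^{\sigma}$ of \Cref{rmk:pairs-inclusions}, I would compute
\[
    \im \roundbr[\big]{N^{\sigma} \to M^{\sigma}} = \widetilde{M_1^S} + \im \roundbr[\big]{(JM)^{\sigma} \to M^{\sigma}} \subseteq \widetilde{M_1^S} + p M^{\sigma} = \widetilde{M_1^S},
\]
and the reverse inclusion is clear, so $\widetilde{M_1^S} = \im(N^{\sigma} \to M^{\sigma})$. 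Since $N$ is the full preimage of $M_1$, it depends only on $(M, M_1)$ as a Dieudonné pair for $S/R$ and not on the lift; this both proves independence and yields the clean formula $\bar{G}(M, M_1) = \im(N^{\sigma} \to M^{\sigma})$.

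It remains to remove the $p$-torsionfreeness assumption and to assemble functoriality and base change. For a general $S$ I would choose a surjection $\widetilde{S} \twoheadrightarrow S$ from a $p$-torsionfree complete Noetherian local ring with the same (perfect) residue field, lift $M$ and the two competing normal decompositions along $\What(\widetilde{S}) \to \What(S)$, apply the $p$-torsionfree case, and descend using that $\widetilde{(\blank)}$ is compatible with base change by \Cref{prop:m-1-tilde}; this gives independence of the lift in general, hence a well-defined $\bar{G}$. Functoriality is then immediate from the preimage description, since a morphism of Dieudonné pairs for $S/R$ carries one preimage module into the other, and compatibility with base change in $S/R$ follows likewise from \Cref{prop:m-1-tilde}. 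Finally, the natural isomorphism $\bar{G} \circ (\text{forget}) \cong \widetilde{(\blank)}$ is induced by the identity of $M^{\sigma}$, as $\widetilde{M_1^S} = \im(N^{\sigma} \to M^{\sigma})$ for any Dieudonné pair over $S$ restricting to the given one. I expect the main obstacle to be the Grothendieck--Messing divisibility $\sigma(J) \subseteq p \What(S)$: all the hypotheses on the divided power structure are consumed precisely here, and everything else is bookkeeping with normal decompositions.
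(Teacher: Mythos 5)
Your $p$-torsionfree argument is correct, and the divisibility $\sigma(J) \subseteq p\What(S)$ for $J = \ker(\What(S) \to \What(R))$ that you isolate is indeed the same input the paper uses, there in the equivalent form that $\sigma$ kills the PD-linearly embedded copy $\fraka \subseteq \What(S)$, so that $\sigma^{\divd}$ extends by zero to $\Ihat_{S/R} = \Ihat_S \oplus \fraka$. The genuine gap is your reduction of the general case to the $p$-torsionfree case; as written it fails. After lifting $M$ and the two competing filtrations along $\What(\widetilde{S}) \to \What(S)$, the two lifted Dieudonné pairs over $\widetilde{S}$ agree only after base change all the way down to $\What(R)$, so the ``$p$-torsionfree case'' you would need is the one for the composite surjection $\widetilde{S} \to R$ --- and its kernel carries no divided power structure, so the key divisibility is simply false for it. Concretely, take $S = k[\varepsilon]$, $R = k$ (with trivial divided powers on $(\varepsilon)$, exactly the situation in which the lemma is later applied, e.g.\ in \Cref{prop:deformation-problem-display-pro-representable} and Construction~\ref{con:universal-deformation}) and $\widetilde{S} = W(k)\doublesquarebr{x} \to S$, $x \mapsto \varepsilon$: then $[x] \in \ker(\What(\widetilde{S}) \to \What(k))$ but $\sigma([x]) = [x^p] \notin p\What(\widetilde{S})$, since $x^p \notin p\widetilde{S}$ and $\widetilde{S}$ is $p$-torsionfree (look at the zeroth ghost component). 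Hence $\im(\widetilde{N}^{\sigma} \to \widetilde{M}^{\sigma})$ is strictly larger than the modules you want to identify, and the comparison upstairs breaks down. Nor can $\widetilde{S}$ be chosen to repair this within the paper's framework: a divided power structure on $\ker(\widetilde{S} \to k) = \frakm_{\widetilde{S}}$ compatible with the canonical one on $p\widetilde{S}$ would force $x^p/p! \in \widetilde{S}$, which is impossible for a $p$-torsionfree Noetherian $\widetilde{S}$. So your argument covers exactly the rings ($\What(S)$ torsionfree) that the lemma's applications do not need, and excludes the Artinian ones it is needed for.

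A second, related point: outside the torsionfree case even the functoriality of your $\bar{G}$ is not ``immediate from the preimage description,'' because that description is only available there; and one cannot produce the comparison isomorphism between two lifts $M_1^S$, $M_1^{S,\prime}$ by applying $\widetilde{(\blank)}$ to an automorphism of $M$ carrying one to the other, since any such automorphism has off-diagonal matrix entries in $\Ihat_{S/R}$ rather than $\Ihat_S$, i.e.\ it is only a morphism of pairs \emph{for $S/R$}. That is precisely the missing ingredient, and it is what the paper's proof supplies: using $\Ihat_{S/R} = \Ihat_S \oplus \fraka$ and $\sigma(\fraka) = 0$, extend the divided Frobenius to $\sigma^{\divd} \colon \Ihat_{S/R}^{\sigma} \to \What(S)$ (zero on $\fraka^{\sigma}$, so that $p\sigma^{\divd}(1 \otimes x) = \sigma(x)$ still holds), and then rerun the normal-decomposition construction of \Cref{prop:m-1-tilde} verbatim on pairs for $S/R$, using relative normal decompositions $M = L \oplus T$ with preimage of $M_1$ equal to $L \oplus \Ihat_{S/R}T$ (these exist by the same idempotent-lifting you already invoke). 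This handles arbitrary $S$ uniformly; your lift-and-compare description then follows as a consequence in the torsionfree case, rather than serving as the construction.
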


\begin{proof}
    Write $\Ihat_{S/R}$ for the kernel of the projection $\What(S) \to R$.
    As described in \cite[Section 2]{zink-dieudonne} the divided powers give rise to a $\What(S)$-linear inclusion $\fraka \subseteq \What(S)$ and $\Ihat_{S/R}$ decomposes as $\Ihat_{S/R} = \Ihat_S \oplus \fraka$.
    We can therefore define a divided Frobenius
    \[
        \sigma^{\divd} \colon \Ihat_{S/R}^{\sigma} = \Ihat_S^{\sigma} \oplus \fraka^{\sigma} \xrightarrow{(\sigma^{\divd}, 0)} \What(S).
    \]
    As $\fraka$ is contained in the kernel of $\sigma \colon \What(S) \to \What(S)$ we still have $p \sigma^{\divd}(1 \otimes x) = \sigma(x)$ for $x \in \Ihat_{S/R}$.
    At this point we can now proceed as in the proof of \Cref{prop:m-1-tilde} to construct the desired functor.
\end{proof}

\begin{definition} \label{def:dieudonne-display-grothendieck-messing}
    A \emph{Dieudonné display for $S/R$} is a tuple $(M, M_1, \Psi)$ where $(M, M_1)$ is a Dieudonné pair for $S/R$ and $\Psi \colon \widetilde{M_1} \to M$ is an isomorphism of $\What(S)$-modules.
\end{definition}

\begin{theorem}[Zink] \label{thm:grothendieck-messing}
    The natural forgetful functor
    \[
        \curlybr[\big]{\text{Dieudonné displays for $S/R$}} \to \curlybr[\big]{\text{Dieudonné displays over $R$}}
    \]
    is an equivalence of categories.
\end{theorem}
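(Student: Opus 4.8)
The plan is to prove the equivalence by separately establishing essential surjectivity and full faithfulness, with the whole argument resting on one structural input extracted from the proof of \Cref{lem:m-1-grothendieck-messing}: via the divided powers the ideal $\fraka$ embeds $\What(S)$-linearly into $\Ihat_{S/R}$ as the kernel $J \coloneqq \ker(\What(S) \to \What(R))$, and this kernel is annihilated by the Frobenius, $\sigma(J) = 0$. Combined with the additivity of the functor $(M, M_1) \mapsto \widetilde{M_1}$, this makes the tilde-construction rigid: if $g \colon M \to M'$ is a morphism of Dieudonné pairs for $S/R$ whose matrix entries lie in $J$ (equivalently, whose reduction over $R$ vanishes), then inspecting the explicit description of $\widetilde{g}$ from \Cref{prop:m-1-tilde} — each entry of which is obtained by applying $\sigma$ (and possibly $\sigma^{\divd}$ or multiplication by $p$) to an entry of $g$ — one finds $\widetilde{g} = 0$. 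This single observation drives both halves of the proof.

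For essential surjectivity I would start from a Dieudonné display $(\overline{M}, M_1, \overline{\Psi})$ over $R$. Since $\What(R)$ is local, $\overline{M}$ is finite free, so I may choose a finite free $\What(S)$-module $M$ with $\What(R) \otimes_{\What(S)} M = \overline{M}$; keeping the filtration $M_1 \subseteq \overline{M}$ unchanged produces a Dieudonné pair $(M, M_1)$ for $S/R$. By the base-change compatibility in \Cref{lem:m-1-grothendieck-messing} its $\widetilde{M_1}$ reduces to the $\widetilde{M_1}$ computed over $R$, so I may lift $\overline{\Psi}$ to a $\What(S)$-linear map $\Psi \colon \widetilde{M_1} \to M$; as $J$ lies in the maximal ideal and $\overline{\Psi}$ is an isomorphism, $\Psi$ is an isomorphism as well. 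By construction $(M, M_1, \Psi)$ maps to $(\overline{M}, M_1, \overline{\Psi})$.

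For full faithfulness I fix Dieudonné displays $(M, M_1, \Psi)$ and $(M', M'_1, \Psi')$ for $S/R$ and recall that a morphism between them is a $\What(S)$-linear $f \colon M \to M'$ respecting the filtrations over $R$ and satisfying $f \circ \Psi = \Psi' \circ \widetilde{f}$. Faithfulness is then immediate: if $f$ reduces to $0$ then $f(M) \subseteq J M'$, hence $\widetilde{f} = 0$ by the rigidity above, hence $f \circ \Psi = \Psi' \circ \widetilde{f} = 0$ and $f = 0$ since $\Psi$ is an isomorphism. For fullness I lift a morphism $\overline{f}$ over $R$ to any $\What(S)$-linear $f_0 \colon M \to M'$; this $f_0$ automatically respects the filtrations (a condition living over $R$) and so is a morphism of Dieudonné pairs for $S/R$. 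Its defect $\delta \coloneqq f_0 \circ \Psi - \Psi' \circ \widetilde{f_0} \colon \widetilde{M_1} \to M'$ reduces to $0$ over $R$ (because $\overline{f}$ is a morphism over $R$), so it has image in $J M'$; setting $g \coloneqq -\delta \circ \Psi^{-1} \colon M \to M'$, which again has image in $J M'$, and $f \coloneqq f_0 + g$, additivity of the tilde-construction together with $\widetilde{g} = 0$ gives $\widetilde{f} = \widetilde{f_0}$, whence $f \circ \Psi - \Psi' \circ \widetilde{f} = \delta + g \circ \Psi = 0$. Thus $f$ is a morphism of Dieudonné displays for $S/R$ lifting $\overline{f}$.

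The main obstacle is the rigidity statement $\widetilde{g} = 0$, i.e.\ the claim that the relative functor $\widetilde{(-)}$ kills every morphism landing in $J M'$; this is exactly where the compatibility of the divided powers with the Frobenius — encoded in $\sigma(\fraka) = 0$ — is indispensable, and it is also what lets the correction term $g$ be found in a single step rather than by a successive-approximation argument along a filtration of $\fraka$. Once this is in hand, the remaining points are the routine facts that finite free modules, and isomorphisms between them, lift along the nilpotent thickening $\What(S) \to \What(R)$.
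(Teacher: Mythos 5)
Your argument has a genuine gap, and it sits exactly at the ``one structural input'' you extract: it is false that the divided powers identify $\fraka$ with the kernel $J = \ker(\What(S) \to \What(R))$, and false that $\sigma(J) = 0$. The kernel is $\What(\fraka) = W(\fraka) \cap \What(S)$, the Witt vectors with entries in $\fraka$; the divided powers embed $\fraka$ into this kernel only as a proper $\What(S)$-submodule (the ``logarithmic'' elements), and it is only that submodule which $\sigma$ kills --- this is precisely how the proof of \Cref{lem:m-1-grothendieck-messing} uses the decomposition $\Ihat_{S/R} = \Ihat_S \oplus \fraka$ and sets $\sigma^{\divd} = (\sigma^{\divd}, 0)$. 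On the full kernel the Frobenius is far from zero: for $a \in \fraka$ one has $V[a] \in J$ with $\sigma(V[a]) = p[a]$ and $\sigma^{\divd}(1 \otimes V[a]) = [a]$, and also $\sigma([a]) = [a^p]$. This kills your rigidity claim. Concretely, take $S = W(k)[\epsilon]/(\epsilon^2) \to R = W(k)$ with $\fraka = (\epsilon)$ and its trivial (nilpotent) divided powers, let $M = M' = \What(S)$ with $M_1 = \Ihat_R \subseteq \What(R)$, and let $g$ be multiplication by $V[\epsilon]$. Then $g$ is a morphism of Dieudonné pairs for $S/R$ whose reduction over $R$ vanishes (entries in $J$), but in the normal decomposition $(L, T) = (0, M)$ one computes $\widetilde{g} = g^{\sigma} = $ multiplication by $\sigma(V[\epsilon]) = p[\epsilon] \neq 0$, since $p\epsilon \neq 0$ in $S$.

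Consequently both halves of your full-faithfulness argument collapse (essential surjectivity is fine; it never used rigidity). In faithfulness you may only conclude $f = \Psi' \circ \widetilde{f} \circ \Psi^{-1}$, and in fullness your one-step correction $f = f_0 + g$ leaves the new defect $-\Psi' \circ \widetilde{g}$, which is nonzero in general. What is actually needed is exactly the successive approximation you claim to avoid: one iterates the operator $f \mapsto \Psi' \circ \widetilde{f} \circ \Psi^{-1}$ on morphisms vanishing over $R$ and proves it is topologically nilpotent, the convergence resting on the nilpotence of the divided powers (for instance $\sigma^{m+k}(V^m[a]) = p^m[a^{p^k}]$, which dies only because $\fraka$ is bounded nil). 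Your proof never invokes this hypothesis, even though it is built into \Cref{not:grothendieck-messing} and into Zink's theorem, which should itself have been a warning sign; compare the proof of \Cref{prop:dieudonne-display-canonical-iso}, which is an instance of the convergence argument that is genuinely required. Note finally that the paper does not reprove the statement at all: its proof is a citation of Zink's Theorem~3 in \cite{zink-dieudonne}, of which the theorem is a reformulation, so the burden your proposal takes on is precisely Zink's argument --- and the shortcut you propose in its place does not work.
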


\begin{proof}
    This is a reformulation of \cite[Theorem 3]{zink-dieudonne}.
\end{proof}

\begin{remark}
    It is again possible to define duals and twists of Dieudonné pairs for $S/R$ and Dieudonné displays for $S/R$, similarly as in Subsection \ref{sec:duals-twists}.
\end{remark}

\subsection{The universal deformation of a display}\hfill\\ \label{sec:universal-deformation-display}
The goal of this subsection is to give a construction of a universal deformation of a display over a perfect field $k$ to a Dieudonné display, see \Cref{thm:universal-deformation-display}.
Here we roughly follow \cite[Subsection 3.1]{kisin-pappas}.

\begin{notation}
    Let $F/\Qp$ be a complete discretely valued field with perfect residue field $k$ and write $W \coloneqq W(k)$.
    Let $(M_0, M_{0, 1}, \Psi_0)$ be a display of type $(h, d)$ over $k$.

    We consider the deformation problem $\Def$ over $\calO_F$, i.e. the set-valued functor on the category of complete Noetherian local $\calO_F$-algebras with residue field $k$, that is given by
    \[
        \Def \colon R \mapsto \curlybr[\Bigg]{\begin{gathered} \text{deformations $(M, M_1, \Psi)$ of $(M_0, M_{0, 1}, \Psi_0)$ over $R$} \\ \text{as a Dieudonné display} \end{gathered}}.
    \]
\end{notation}

\begin{proposition} \label{prop:deformation-problem-display-pro-representable}
    The deformation problem $\Def$ is pro-representable.
\end{proposition}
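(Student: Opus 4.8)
\emph{The plan} is to verify Schlessinger's criteria, using \Cref{thm:grothendieck-messing} to supply the linearization and a rigidity statement to upgrade a hull to a pro-representing object. First I would reduce to Artinian coefficients: since the formation of $\What(-)$, of finite projective modules, of the direct-summand condition defining the Hodge filtration, and of isomorphisms all commute with the inverse limit $R = \varprojlim_n R/\mathfrak{m}_R^n$ for $R$ complete Noetherian local, the functor $\Def$ is continuous, i.e.\ $\Def(R) = \varprojlim_n \Def(R/\mathfrak{m}_R^n)$. It therefore suffices to show that the restriction of $\Def$ to Artinian local $\calO_F$-algebras with residue field $k$ admits a hull and has no nontrivial infinitesimal automorphisms.

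To obtain a hull I would check Schlessinger's conditions (H1)--(H3). For (H1) and (H2), given a diagram $A' \to A \leftarrow A''$ with $A'' \to A$ surjective, the identification $\What(A' \times_A A'') = \What(A') \times_{\What(A)} \What(A'')$ (which follows from the corresponding property of the Witt vectors together with the pointwise description of the Zink ring) lets me glue the underlying modules, the Hodge filtrations (gluing compatible normal decompositions as in \Cref{rmk:normal-decomposition}), and the isomorphisms $\Psi$; this yields the surjectivity in (H1) and the bijectivity in (H2). For (H3) I would compute the tangent space: every small extension has square-zero, hence canonically divided-power, kernel, so \Cref{thm:grothendieck-messing} applies and identifies deformations along $k[\epsilon] \to k$ with lifts of the Hodge filtration (the underlying $\What(k[\epsilon])$-module and its $\Psi$ being rigid). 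Hence $\Def(k[\epsilon]) \cong \Hom_k(M_{0, 1}/\Ihat_k M_0, M_0/M_{0, 1}) \otimes_k \epsilon k$, which is finite-dimensional.

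It then remains to prove rigidity: any automorphism $\alpha$ of a deformation $(M, M_1, \Psi)$ over an Artinian $R$ that reduces to the identity of $(M_0, M_{0, 1}, \Psi_0)$ is the identity. Writing $\alpha = \id + \beta$ with $\beta(M) \subseteq \mathfrak{m}_R^i M$, the compatibility of $\alpha$ with $\Psi$ --- equivalently with the Frobenius $\Phi$ of \Cref{def:display-frobenius} --- together with the semilinearity of $\sigma$ forces $\beta(M) \subseteq \mathfrak{m}_R^{i + 1} M$. The key point is that $\sigma$ strictly raises the $\mathfrak{m}_R$-adic order of the error term: on the augmentation part of $\What(R)$ the Frobenius is topologically nilpotent (for a Teichmüller representative $\sigma([x]) = [x^p]$ and $\mathfrak{m}_R$ is nilpotent), so no divisibility by $p$ is needed and the argument applies uniformly, including to the étale part of the display where $\Phi$ is an isomorphism. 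Since $\mathfrak{m}_R$ is nilpotent, iterating gives $\beta = 0$.

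Finally, a hull together with the vanishing of infinitesimal automorphisms implies pro-representability: given the hull, this vanishing is equivalent to Schlessinger's condition (H4), so by Schlessinger's theorem $\Def$ is pro-representable. I expect the rigidity step to be the main obstacle, since it requires making precise --- uniformly over the slope decomposition of the display --- the claim that Frobenius semilinearity strictly increases the $\mathfrak{m}_R$-adic order of an automorphism congruent to the identity.
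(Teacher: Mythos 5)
Your hull step is sound and genuinely different from the paper's argument: you glue Dieudonné displays over $\What(R' \times_R R'') = \What(R') \times_{\What(R)} \What(R'')$ by Milnor patching to obtain (H1)--(H2), and invoke \Cref{thm:grothendieck-messing} only for the tangent space (H3). The paper never patches modules at all: it applies \Cref{thm:grothendieck-messing}, with the trivial divided powers on the square-zero kernels, to every fiber-product diagram, so that lifting a Dieudonné display across $R' \to R$ becomes \emph{exactly} lifting the Hodge filtration. Since a filtration over $R' \times_R R''$ is literally a compatible pair of filtrations, this gives the full bijectivity of $\Def(R' \times_R R'') \to \Def(R') \times_{\Def(R)} \Def(R'')$, i.e.\ Schlessinger's (H1), (H2) \emph{and} (H4) in one stroke, with no discussion of automorphisms whatsoever. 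Your route, by contrast, must supply (H4) through the rigidity statement, and that is where the proposal breaks down.

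The rigidity sketch is not correct as stated. First, $\Def$ is a functor on $\calO_F$-algebras, so $R$ is in general of mixed characteristic; there the Frobenius on $\What(\frakm_R)$ is not computed on Teichmüller representatives alone: one has $\sigma(V^i[a]) = p\, V^{i-1}[a]$, so on the image of the Verschiebung $\sigma$ is essentially multiplication by $p$, and the claim that ``no divisibility by $p$ is needed'' is false --- any convergence argument must use, for instance, that $\What(\frakm_R)$ is killed by a power of $p$. Second, and more seriously: writing the compatibility of $\alpha = \id + \beta$ with $\Psi$ as $\beta = \Psi \widetilde{\beta} \Psi^{-1}$ (note that compatibility with $\Phi$ is a priori weaker here, since $\What(R)$ has $p$-torsion; one must work with $\Psi$ and the functor $\beta \mapsto \widetilde{\beta}$), one has, in terms of a normal decomposition, $\widetilde{\beta} = \begin{psmallmatrix} a^{\sigma} & p b^{\sigma} \\ \dot{c} & d^{\sigma} \end{psmallmatrix}$ for $\beta = \begin{psmallmatrix} a & b \\ c & d \end{psmallmatrix}$, and the entry $\dot{c} = (\sigma^{\divd} \otimes \id) \circ c^{\sigma}$ exactly undoes a Verschiebung: if $\beta \equiv 0 \pmod{\What(\fraka)}$ for an ideal $\fraka \subseteq \frakm_R$, then the $c$-block has entries in $\What(\fraka) \cap \Ihat_R = V(\What(\fraka))$, and $\dot{c}$ has entries in $\What(\fraka)$ again --- no gain whatsoever. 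So the asserted one-step strict increase of $\frakm_R$-adic order fails precisely on the off-diagonal block; this is not a matter of ``making the claim precise''. Rigidity of Dieudonné displays over Artinian $R$ is in fact true, but the proofs available go through Zink's equivalence with $p$-divisible groups together with Drinfeld rigidity, or through the finite-support ($V$-adic separatedness) property of the Zink ring; both are substantially more than the formal semilinear iteration you describe, and this is exactly the step that the paper's use of Grothendieck--Messing theory is designed to avoid. As it stands, your argument establishes a hull but not pro-representability.
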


\begin{proof}
    We check that Schlessinger's criterion applies (see \cite[Theorem 2.11]{schlessinger}).

    \begin{itemize}
        \item
        Clearly we have $\Def(k) = \curlybr{\ast}$.

        \item
        Suppose that we are given a diagram
        \[
        \begin{tikzcd}
            & R'' \ar[d]
            \\
            R' \ar[r]
            & R
        \end{tikzcd}
        \]
        of Artinian local $\calO_F$-algebras with residue field $k$ with $R' \to R$ surjective and assume without loss of generality that  $R' \to R$ is a square zero extension.
        Also set $R''' \coloneqq R' \times_R R''$ and equip the kernels of the morphisms $R' \to R$ and $R''' \to R''$ with the trivial divided powers.

        We have to show that the map
        \[
            \Def(R''') \to \Def(R') \times_{\Def(R)} \Def(R'')
        \]
        is a bijection.

        So let $(M'', M''_1, \Psi'') \in \Def(R'')$ and write $(M, M_1, \Psi)$ for its base change to $R$.
        By \Cref{thm:grothendieck-messing} $(M'', M''_1, \Psi'')$ lifts uniquely to a Dieudonné display $(M''', M''_1, \Psi''')$ for $R'''/R''$.
        In the same way $(M, M_1, \Psi)$ lifts uniquely to a Dieudonné display $(M', M_1, \Psi')$ for $R'/R$ and in fact $(M', M_1, \Psi')$ is the base change of $(M''', M''_1, \Psi''')$ along the morphism $(R''' \to R'') \to (R' \to R)$.

        Now lifting $(M'', M''_1, \Psi'')$ to an object in $\Def(R''')$ is the same as giving a lift of $M''_1$ to a filtration $M'''_1 \subseteq M'''$ (making $(M''', M'''_1)$ a Dieudonné pair) while lifting $(M, M_1, \Psi)$ to an object in $\Def(R')$ is the same as giving a lift of $M_1$ to a filtration $M'_1 \subseteq M'$.
        Thus the claim follows because $R''' = R' \times_R R''$.

        \item
        \Cref{thm:grothendieck-messing} implies that the tangent space $T_{\ast} \Def$ at the base point $\ast \in \Def(k)$ is naturally isomorphic to the tangent space of the Grassmannian $\Grass_{M_0, d}$ at the point
        \[
            \roundbr[\big]{M_{0, 1}/pM_0 \subseteq M_0/pM_0 \cong M_{0, k}} \in \Grass_{M_0, d}(k).
        \]
        In particular it is finite-dimensional. \qedhere 
    \end{itemize}
\end{proof}

\begin{construction} \label{con:universal-deformation}
    Write $R^{\univ}$ for the completed local ring of $\Grass_{M_0, d, \calO_F}$ at the $k$-point corresponding to $M_{0, 1}/pM_0 \subseteq M_{0, k}$.
    Set
    \[
        M^{\univ} \coloneqq M_{0, \What(R^{\univ})}
    \]
    and let
    \[
        M^{\univ}_1 \subseteq M^{\univ}
    \]
    be the preimage of the universal direct summand of $M^{\univ}/\Ihat_{R^{\univ}} M^{\univ} \cong M_{0, R^{\univ}}$.
    We thus have a Dieudonné pair
    \[
        \roundbr[\big]{M^{\univ}, M^{\univ}_1}
    \]
    over $R^{\univ}$ that is a deformation of $(M_0, M_{0, 1})$.
    Set
    \[
        \fraka \coloneqq \frakm_{R^{\univ}}^2 + \frakm_F R^{\univ} \subseteq R^{\univ}
    \]
    and equip $\frakm_{R^{\univ}}/\fraka \subseteq R^{\univ}/\fraka$ with the trivial divided powers.
    
    Now consider the composition
    \[
        \What(R^{\univ}/\fraka) \otimes_{\What(R^{\univ})} \widetilde{M^{\univ}_1} \to \roundbr[\big]{\widetilde{M_{0, 1}}}_{\What(R^{\univ}/\fraka)} \xrightarrow{\Psi_0} M_{0, \What(R^{\univ}/\fraka)} \to \What(R^{\univ}/\fraka) \otimes_{\What(R^{\univ})} M^{\univ},
    \]
    where the first isomorphism is the one given by \Cref{lem:m-1-grothendieck-messing}.
    We choose a lift of this composition to an isomorphism
    \[
        \Psi^{\univ} \colon \widetilde{M^{\univ}_1} \to M^{\univ}
    \]
    so that we obtain a deformation $\roundbr[\big]{M^{\univ}, M^{\univ}_1, \Psi^{\univ}} \in \Def(R^{\univ})$.
\end{construction}

\begin{theorem} \label{thm:universal-deformation-display}
    The deformation $(M^{\univ}, M^{\univ}_1, \Psi^{\univ}) \in \Def(R^{\univ})$ from Construction~\ref{con:universal-deformation} is universal.
\end{theorem}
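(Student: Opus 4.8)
The plan is to show that the morphism of functors
\[
    \eta \colon \Hom_{\calO_F}\roundbr[\big]{R^{\univ}, \blank} \to \Def
\]
induced by the deformation $(M^{\univ}, M^{\univ}_1, \Psi^{\univ})$ from Construction~\ref{con:universal-deformation} is an isomorphism. By \Cref{prop:deformation-problem-display-pro-representable} the functor $\Def$ is pro-representable, say by a complete Noetherian local $\calO_F$-algebra $R_{\Def}$, and the universal object corresponds via Yoneda to a local $\calO_F$-morphism $\phi \colon R_{\Def} \to R^{\univ}$ with $\eta = \Hom(\phi, \blank)$. It therefore suffices to prove that $\phi$ is an isomorphism, and for this I would invoke the standard criterion that a local homomorphism of complete Noetherian local rings with common residue field $k$ which is formally smooth and induces a bijection on tangent spaces is an isomorphism. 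Thus the proof reduces to the two assertions that $\eta$ is formally smooth and that it is bijective on tangent spaces.

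The bijectivity on tangent spaces is immediate from the constructions. On the one hand $R^{\univ}$ is by definition the completed local ring of $\Grass_{M_0, d, \calO_F}$ at the point corresponding to $M_{0, 1}/pM_0 \subseteq M_{0, k}$, so that $\Hom_{\calO_F}(R^{\univ}, k[\epsilon])$ is canonically the tangent space of $\Grass_{M_0, d}$ at that point. On the other hand the tangent space computation carried out in the proof of \Cref{prop:deformation-problem-display-pro-representable} identifies $T_{\ast} \Def$ with the same tangent space. Since the filtration $M^{\univ}_1$ is the tautological one on the Grassmannian, one checks that $\eta$ realizes the identity under these two identifications.

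For formal smoothness I would argue as follows. Any surjection of Artinian local $\calO_F$-algebras with residue field $k$ factors into a finite chain of small extensions $A' \to A$ whose kernel $\fraka$ is isomorphic to $k$; in particular $\frakm_{A'} \fraka = 0$ and hence $p \fraka = 0$, so that the trivial divided powers on $\fraka$ are nilpotent and compatible with the canonical divided powers on $p A'$ in the sense of Notation~\ref{not:grothendieck-messing}. It therefore suffices to solve the lifting problem for such $A' \to A$: given $f \colon R^{\univ} \to A$ and a deformation $\xi' \in \Def(A')$ restricting over $A$ to $f^{\ast}(M^{\univ}, M^{\univ}_1, \Psi^{\univ})$, I must produce a lift $f' \colon R^{\univ} \to A'$ of $f$ with $f'^{\ast}(M^{\univ}, M^{\univ}_1, \Psi^{\univ}) \cong \xi'$. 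Applying Grothendieck-Messing theory (\Cref{thm:grothendieck-messing}) to the divided power thickening $A' \to A$, the display $f^{\ast}(M^{\univ}, M^{\univ}_1, \Psi^{\univ})$ over $A$ extends uniquely to a Dieudonné display for $A'/A$; since $M^{\univ} = M_{0, \What(R^{\univ})}$ is the constant module, this extension has underlying $\What(A')$-module the constant module $M_{0, \What(A')}$, with $\Psi$ the base change of $\Psi^{\univ}$ via \Cref{lem:m-1-grothendieck-messing}, and with Hodge filtration over $A$ the one determined by $f$. Under the equivalence of \Cref{thm:grothendieck-messing}, lifting this datum to a genuine Dieudonné display over $A'$ amounts precisely to lifting the Hodge filtration from a direct summand of $M_{0, A}$ to a direct summand of $M_{0, A'}$. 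Such lifts are exactly the $A'$-points of $\Grass_{M_0, d}$ refining the given $A$-point, hence exactly the liftings $f'$ of $f$, and by construction $f'^{\ast} M^{\univ}_1$ is the tautological filtration attached to $f'$. This establishes the formal smoothness (indeed the lift is unique, so $\eta$ is even formally étale) and completes the proof.

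The step I expect to be the main obstacle is the Grothendieck-Messing identification in the last paragraph: one must check carefully that, because $M^{\univ}$ is the constant module, the crystal attached by \Cref{thm:grothendieck-messing} to $f^{\ast}(M^{\univ}, M^{\univ}_1, \Psi^{\univ})$ is canonically the constant module $M_{0, \What(A')}$ and that $\Psi^{\univ}$ transports correctly under \Cref{lem:m-1-grothendieck-messing}, so that lifting the display is genuinely equivalent to lifting the Hodge filtration and matches the pullback $f'^{\ast}(M^{\univ}, M^{\univ}_1, \Psi^{\univ})$ on the nose.
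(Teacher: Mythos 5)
The first half of your proposal coincides with the paper's proof: pro-representability via \Cref{prop:deformation-problem-display-pro-representable}, and bijectivity on tangent spaces, which is unproblematic because every tangent vector of $\Spf(R^{\univ})$ kills $\fraka = \frakm_{R^{\univ}}^2 + \frakm_F R^{\univ}$ and hence factors through $R^{\univ}/\fraka$, where $\Psi^{\univ}$ is \emph{by construction} the canonical constant map, so that Grothendieck-Messing theory visibly returns the tautological filtration. After that the two arguments diverge. The paper never proves formal smoothness of $\eta$: writing $\phi \colon R_{\Def} \to R^{\univ}$ for the map corresponding to $\eta$, bijectivity on tangent spaces already makes $\phi$ surjective (completeness and Nakayama), and a surjection of complete Noetherian local $\calO_F$-algebras onto a power series ring that is bijective on cotangent spaces is an isomorphism. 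This soft ending is designed precisely so that the rigidity of $\Psi^{\univ}$ is needed only at first order, where it is trivial. You instead prove that $\eta$ is formally smooth (indeed formally étale), a stronger statement that must be verified at every Artinian level; your reduction to small extensions with their trivial divided powers is fine, and so is your criterion (formally smooth plus bijective on tangent spaces implies isomorphism), but the smoothness argument itself has a gap.

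The gap is the step you yourself flag, and it is not merely a verification: the phrase ``with $\Psi$ the base change of $\Psi^{\univ}$ via \Cref{lem:m-1-grothendieck-messing}'' does not parse, since $\Psi^{\univ}$ lives over $\What(R^{\univ})$ and base changing it to $\What(A')$ requires a ring map $R^{\univ} \to A'$ lifting $f$ --- precisely the object you are trying to construct. The repair is to choose an auxiliary lift $f'' \colon R^{\univ} \to A'$ of $f$ (possible since $R^{\univ}$ is a power series ring) and to use $f''^{\ast}(M^{\univ}, M^{\univ}_1, \Psi^{\univ})$, viewed as a Dieudonné display for $A'/A$, as the model of the Grothendieck-Messing extension; transporting the filtration of $\xi'$ through the unique GM-isomorphism then produces a lift $f'$ of $f$. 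But the desired conclusion $f'^{\ast}(M^{\univ}, M^{\univ}_1, \Psi^{\univ}) \cong \xi'$ now needs that $\widetilde{f'^{\ast}\Psi^{\univ}}$ and $\widetilde{f''^{\ast}\Psi^{\univ}}$ agree under the canonical identifications of \Cref{lem:m-1-grothendieck-messing}, i.e.\ independence of the choice of lift. This is a genuine claim, and it is exactly where the normalization of $\Psi^{\univ}$ from Construction~\ref{con:universal-deformation} --- that it lifts the canonical map over $\What(R^{\univ}/\fraka)$ --- must enter: any two lifts of $f$ agree on $\fraka$ (the kernel of $A' \to A$ is killed by $\frakm_{A'}$), hence $\What(f')$ and $\What(f'')$ agree on $\ker(\What(R^{\univ}) \to \What(R^{\univ}/\fraka))$, and $\Psi^{\univ}$ is congruent to a constant modulo this kernel; one also has to check that the transition between the normal decompositions induced by $f'$ and $f''$ is killed by the divided Frobenius of the PD-ideal. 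Your write-up never invokes this normalization anywhere --- the only feature of the universal object you use is that $M^{\univ}$ is a constant module --- and no argument avoiding it can be correct: for an isomorphism $\Psi$ chosen without the normalization, universality (hence smoothness of $\eta$) fails in general, because the induced map on tangent spaces acquires an uncontrolled correction. So either carry out the independence argument above, or drop the smoothness step entirely and close the proof as the paper does.
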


\begin{proof}
    By \Cref{prop:deformation-problem-display-pro-representable} we know that $\Def$ is pro-representable.
    From the construction and \Cref{thm:grothendieck-messing} it follows that the morphism $\Spf(R^{\univ}) \to \Def$ induces an isomorphism on tangent spaces at the respective unique $k$-points (see also the third bullet point in the proof of \Cref{prop:deformation-problem-display-pro-representable}).
    As $R^{\univ}$ is a power series ring over $\calO_F$ this is enough to conclude that in fact $\Spf(R^{\univ}) \to \Def$ is an isomorphism.
\end{proof}

\subsection{\texorpdfstring{Displays and $p$-divisible groups}{Displays and p-divisible groups}}

\begin{theorem}[Lau, Zink] \label{thm:p-div-groups-disps}
    There exists a natural functor
    \[
        \curlybr[\big]{\text{$p$-divisible groups over $R$}}^{\op} \to \curlybr[\big]{\text{displays over $R$}}
    \]
    that is compatible with dualities and coincides with (contravariant) Dieudonné theory when $R$ is a perfect $\Fp$-algebra.
    If $X$ is a $p$-divisible group of height $h$ and dimension $d$ then the associated display is of type $(h, d)$.

    Let $R$ be a complete Noetherian local ring with perfect residue field.
    Then there is a natural equivalence
    \[
        \curlybr[\big]{\text{$p$-divisible groups over $R$}}^{\op} \to \curlybr[\big]{\text{Dieudonné displays over $R$}}
    \]
    that is again compatible with dualities.
    The two constructions are compatible in the sense that the diagram
    \[
    \begin{tikzcd}[row sep = tiny]
        & \curlybr[\big]{\text{Dieudonné displays over $R$}} \ar[dd]
        \\
        \curlybr[\big]{\text{$p$-divisible groups over $R$}}^{\op} \ar[ru] \ar[rd] &
        \\
        & \curlybr[\big]{\text{displays over $R$}}
    \end{tikzcd}
    \]
    is commutative.
\end{theorem}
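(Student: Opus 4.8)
The plan is to reduce the entire statement to the existing display theory of Zink and Lau, and to treat the bulk of the work as a reconciliation of our non-standard formulation with the standard one. First I would recall Zink's construction \cite{zink}, which associates to a $p$-divisible group over $R$ a display in his original sense, i.e.\ a tuple $(M, M_1, \Phi, \Phi_1)$; this was extended from the formal (nilpotent) case to arbitrary $p$-divisible groups by Lau. As already observed before \Cref{def:display}, the passage between Zink's tuples $(\Phi, \Phi_1)$ and our isomorphisms $\Psi \colon \widetilde{M_1} \to M$ is exactly the content of the argument of \cite[Lemma 3.1.5]{kisin-pappas}: giving $(\Phi, \Phi_1)$ subject to Zink's compatibility is the same as giving $\Psi$. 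Composing Zink's/Lau's functor with this equivalence of definitions produces the desired functor $\curlybr[\big]{\text{$p$-divisible groups over $R$}}^{\op} \to \curlybr[\big]{\text{displays over $R$}}$. The type assertion then follows from the fact that the (contravariant) Dieudonné crystal of a $p$-divisible group of height $h$ and dimension $d$ is finite projective of rank $h$ and its Hodge filtration is a direct summand of the appropriate rank; the only point requiring care is to fix the contravariant normalization so that the dimension of $X$ corresponds to $d$ rather than to $h - d$.

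Next I would verify the comparison with classical Dieudonné theory over a perfect $\Fp$-algebra $R$. Here $W(R)$ is $p$-torsionfree, and by \Cref{rmk:m-1-tilde-m-1-sigma} the canonical surjection $M_1^{\sigma} \to \widetilde{M_1}$ is an isomorphism, so a display over $R$ is literally a tuple $(M, M_1, \Psi \colon M_1^{\sigma} \to M)$, which is precisely a Dieudonné module in the sense recalled at the beginning of this section. Under this identification I would check that the $\Phi$ and $\Phi_1$ produced by Zink's functor recover the usual Frobenius and (inverse) Verschiebung, so that our functor restricts to contravariant Dieudonné theory as claimed.

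For the Dieudonné display statement I would invoke Zink's main theorem of \cite{zink-dieudonne}, which gives a contravariant equivalence between $p$-divisible groups over a complete Noetherian local ring $R$ with perfect residue field and Dieudonné displays in his sense. Translating through the same \cite[Lemma 3.1.5]{kisin-pappas}-style argument, now using the functor $(M, M_1) \mapsto \widetilde{M_1}$ for Dieudonné pairs set up in \Cref{prop:m-1-tilde}, yields the stated equivalence into our Dieudonné displays. Commutativity of the triangle then reduces to the observation that the forgetful functor from Dieudonné displays to displays is base change along the inclusion $\What(R) \subseteq W(R)$, and that Zink's association of a (Dieudonné) display to a $p$-divisible group is compatible with this base change; this compatibility is built into the two constructions.

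The one genuinely non-formal point is the compatibility with dualities, and I expect this to be the main obstacle. The content is that Zink's and Lau's functors send Cartier--Serre duality $X \mapsto X^{\vee}$ to the display duality of \Cref{def:dual-displays}. Our duality was set up precisely so that the natural isomorphism $\widetilde{M_1^{\ast}} \cong \widetilde{M_1}^{\vee}$ of \Cref{lem:m-1-tilde-duality} feeds into the definition of $\Psi^{\vee, -1}$, so the verification amounts to checking that this isomorphism matches the canonical pairing between the (Dieudonné) displays of $X$ and $X^{\vee}$. The delicate bookkeeping lies in keeping the contravariant conventions and the factor-of-$p$ twist consistent throughout; this twist is exactly the one made explicit by the map $p^{-1}$ in the commutative square of \Cref{lem:m-1-tilde-duality}, and a sign or twist error here is the most likely source of trouble. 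Everything outside of this duality check is a direct application of the cited results.
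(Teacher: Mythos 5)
Your proposal is correct and follows essentially the same route as the paper: the paper's proof is simply the citation to \cite[Proposition 2.1]{lau-truncated} and \cite{zink-dieudonne}, with the translation between Zink's $(\Phi, \Phi_1)$-formulation and the $(M, M_1, \Psi)$-formulation handled by the argument of \cite[Lemma 3.1.5]{kisin-pappas} (as noted in the introduction to the displays section) and the contravariant normalization recorded in the remark following the theorem. Your write-up merely makes explicit the bookkeeping (type, duality, compatibility of the triangle) that the paper leaves to the cited sources.
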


\begin{proof}
    This is proven by Lau and Zink in \cite[Proposition 2.1]{lau-truncated} and \cite{zink-dieudonne}.
\end{proof}

\begin{remark}
    The functors in \Cref{thm:p-div-groups-disps} are not exactly the ones from \cite{lau-truncated} and \cite{zink-dieudonne}, as we choose the contravariant normalization (following \cite{kisin-pappas}) instead of the covariant one.
\end{remark}
    \section{\texorpdfstring{$(\calG, \bmmu)$-displays}{(G, mu)-displays}} \label{sec:g-mu-displays}

In \cite{bueltel-pappas}, Bültel and Pappas introduced the notion of a $(\calG, \mu)$-display for a reductive group scheme $\calG$ over $\Zp$ and a minuscule cocharacter $\mu$ of $\calG$ defined over the ring of integers of some finite unramified extension of $\Qp$.
One recovers the usual notion of a display by setting $\calG = \GL_{h, \Zp}$.
In a Hodge type situation, Pappas generalized this in \cite{pappas} to the case where $\calG$ is a parahoric group scheme, but he only works over $p$-torsionfree $p$-complete base rings.

In this chapter we recall the definitions of (Dieudonné) $(\calG, \bmmu)$-displays from \cite{pappas} and generalize them to general $p$-complete base rings (respectively complete Noetherian local rings with perfect residue field).
We also introduce the new notion of an $(m, n)$-truncated $(\calG, \bmmu)$-display.
Then we study the deformation theory of Dieudonné $(\calG, \bmmu)$-displays and in particular explicitly construct a universal deformation for a $(\calG, \bmmu)$-display over a perfect field $k$.
Finally we give an application to the Kisin-Pappas integral models of Shimura varieties of Hodge type at parahoric level defined in \cite{kisin-pappas} and specifically to the EKOR stratification that was introduced by He and Rapoport in \cite{he-rapoport}.

\begin{notation} \label{not:g-mu-displays-group-setup}
    We fix a local model datum $(\calG, \bmmu)$, i.e.\ a tuple consisting of a parahoric group scheme $\calG$ over $\Zp$ with generic fiber $G = \calG_{\Qp}$ and a minuscule geometric conjugacy class of cocharacters of $G$.
    Write $E \subseteq \Qpbar$ for its reflex field and let $q$ be the cardinality of the residue field $\calOE/\frakm_E = \Fq$.
    Associated to $(\calG, \bmmu)$ we have, by the work of Rapoport-Zink, Pappas-Zhu, Scholze-Weinstein, and recently Anschütz-Gleason-Lourenço-Richarz and Fakhruddin-Haines-Lourenço-Richarz (and many more), the local model $\bbM^{\loc} = \bbM^{\loc}_{\calG, \bmmu}$, that is a flat projective normal $\calOE$-scheme that comes equipped with an action of $\calG$ (see \cite{anschuetz-gleason-lourenco-richarz} and \cite{fakhruddin-haines-lourenco-richarz}).
    We write $\rmM^{\loc}$ for its $p$-completion.

    We also fix a finite free $\Zp$-module $\Lambda$ of rank $h$ and a closed immersion of $\Zp$-algebraic groups $\iota \colon \calG \to \GL(\Lambda)$ such that the following conditions are satisfied.
    \begin{itemize}
        \item
        $\iota(\bmmu) = \bmmu_d$ for some $0 \leq d \leq h$.
        Here $\bmmu_d$ denotes the conjugacy class of cocharacters of $\GL(\Lambda)_{\Qpbar}$ that induce a weight decomposition $\Lambda_{\Qpbar} = \Lambda_0 \oplus \Lambda_{-1}$ with $\Lambda_{-1}$ of dimension $d$.

        \item
        $\bfG_{m, \Zp} \subseteq \iota(\calG)$.

        \item
        The induced closed immersion $X_{\bmmu}(G) \to X_{\bmmu_d}(\GL(\Lambda))_E$ extends to a closed immersion
        \[
            \bbM^{\loc} \to \roundbr[\big]{\bbM^{\loc}_{\GL(\Lambda), \bmmu_d}}_{\calOE} = \Grass_{\Lambda, d, \calOE}.
        \]
        This morphism is then automatically equivariant for $\iota \colon \calG \to \GL(\Lambda)$.
    \end{itemize}
    Whenever it is convenient we omit $\iota$ from our notation and simply view it as an inclusion $\calG \subseteq \GL(\Lambda)$.

    In this chapter $R$ always denotes a $p$-complete $\calO_E$-algebra.
    Moreover, $m$ always denotes a positive integer and $(m, n)$ always denotes a tuple of positive integers with $m \geq n + 1$.
    Sometimes (with explicit indication) we allow $n$ to take the additional value $\onerdt$ (recall that we write $L^{(\onerdt)} \calG$ for the reductive quotient of $\calG_{\Fp}$), in which case we require $m \geq 2$.
\end{notation}

\begin{remark} \label{rmk:existence-iota-condition}
    Note that in Notation~\ref{not:g-mu-displays-group-setup} the existence of $(\Lambda, \iota)$ with the required properties restricts the possible choices for $(\calG, \bmmu)$.
    By \cite[Proposition 3.1.6]{pappas}, the following conditions are sufficient for the existence of $(\Lambda, \iota)$.
    \begin{itemize}
        \item
        There exists a finite-dimensional $\Qp$-vector space $W$ and an injective morphism of $\Qp$-algebraic groups $\kappa \colon G \to \GL(W)$ such that $\kappa(\bmmu) = \bmmu_d$ for some $d$ and $\bfG_{m, \Qp} \subseteq \kappa(G)$.

        \item
        $G$ splits over a tamely ramified extension of $\Qp$.

        \item
        $p$ does not divide the order of $\pi_1(G_{\der, \Qpbar})$.

        \item
        $\calG$ is a parahoric stabilizer.
    \end{itemize}
\end{remark}

\subsection{\texorpdfstring{$(\calG, \bmmu)$-pairs and $(\calG, \bmmu)$-displays}{(G, mu)-pairs and (G, mu)-displays}}

\begin{definition} \label{def:g-mu-pairs}
    A \emph{$(\calG, \bmmu)$-pair over $R$} is a tuple $(\calP, q)$ consisting of a $\calG$-torsor $\calP$ over $W(R)$ and a $\calG$-equivariant morphism $q \colon \calP_R \to \rmM^{\loc}$.

    An \emph{$m$-truncated $(\calG, \bmmu)$-pair over $R$} is a tuple $(\calP, q)$ consisting of a $\calG$-torsor $\calP$ over $W_m(R)$ and a $\calG$-equivariant morphism $q \colon \calP_R \to \rmM^{\loc}$.

    Now let $R$ be a complete Noetherian local $\calOE$-algebra with perfect residue field.
    A \emph{Dieudonné $(\calG, \bmmu)$-pair over $R$} is a tuple $(\calP, q)$ consisting of a $\calG$-torsor $\calP$ over $\What(R)$ and a $\calG$-equivariant morphism $q \colon \calP_R \to \rmM^{\loc}$.
\end{definition}

\begin{remark} \label{rmk:g-mu-pair-pair}
    Giving a $(\calG, \bmmu)$-pair over $R$ is the same as giving a pair $(M, M_1)$ of type $(h, d)$ over $R$ (see \Cref{def:pair}) together with a $\calG$-structure on $M$ such that for every (or equivalently one) local trivialization of $M$ the $R$-point of $(\Grass_{\Lambda, d, \calOE})^{\wedge}_p$ parametrizing $M_1$ is contained in $\rmM^{\loc}$.
    Similar statements also hold for truncated $(\calG, \bmmu)$-pairs and Dieudonné $(\calG, \bmmu)$-pairs.

    In the following we will abuse notation and refer to a pair $(M, M_1)$ that is implicitly equipped with a $\calG$-structure as above as a $(\calG, \bmmu)$-pair.
\end{remark}

\begin{lemma} \label{lem:g-mu-pairs-stack}
    The groupoids of $(\calG, \bmmu)$-pairs over varying $p$-complete $\calO_E$-algebras naturally form a stack $\Pair_{\calG, \bmmu}$ over $\Spf(\calO_E)$ and we have a natural equivalence
    \[
        \Pair_{\calG, \bmmu} \cong \squarebr[\big]{L^+ \calG \backslash \rmM^{\loc}}.
    \]
    Similarly, also $m$-truncated $(\calG, \bmmu)$-pairs form a ($p$-adic formal algebraic) stack $\Pair^{(m)}_{\calG, \bmmu}$ over $\Spf(\calO_E)$ that is naturally equivalent to the quotient stack $[L^{(m)} \calG \backslash \rmM^{\loc}]$.
\end{lemma}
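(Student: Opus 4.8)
The plan is to identify, functorially in the $p$-complete $\calO_E$-algebra $R$, the groupoid of $(\calG, \bmmu)$-pairs over $R$ with the groupoid of $R$-points of the quotient stack $[L^+\calG \backslash \rmM^{\loc}]$, and then to deduce the stack property of $\Pair_{\calG,\bmmu}$ from the fact that a quotient stack is a stack. First I would recall the functor of points of the quotient. Since $L^+\calG$ is a flat affine group scheme over $\Zp$ and $\rmM^{\loc}$ is a $p$-adic formal scheme carrying a $\calG$-action, and hence an $L^+\calG$-action through the projection $\pi \colon L^+\calG \to \calG$ induced by $W(R) \to R$, the quotient $[L^+\calG \backslash \rmM^{\loc}]$ is an fppf stack whose $R$-points form the groupoid of pairs $(T, f)$, where $T$ is an $L^+\calG$-torsor over $\Spec R$ and $f \colon T \to \rmM^{\loc}$ is an $L^+\calG$-equivariant morphism for this twisted action.

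The heart of the argument is the equivalence of groupoids
\[
    \{\text{$L^+\calG$-torsors over }R\} \xrightarrow{\ \sim\ } \{\text{$\calG$-torsors over }W(R)\}, \qquad T \mapsto \calP,
\]
natural in $R$, where on the right one means torsors trivialized by an fppf cover pulled back from $\Spec R$. The functor sends a torsor trivialized over $R \to R'$, with descent cocycle in $(L^+\calG)(R' \otimes_R R') = \calG(W(R' \otimes_R R'))$, to the $\calG$-torsor over $W(R)$ obtained by gluing trivial torsors on $\Spec W(R')$ along this cocycle. Next I would check that under $T \mapsto \calP$ the base change $\calP_R$ along $W(R) \to R$ is canonically the pushout $T \times^{L^+\calG} \calG$ along $\pi$; comparing cocycles, both are glued from $\calG_{R'}$ via the image of the cocycle under $\calG(W(R' \otimes_R R')) \to \calG(R' \otimes_R R')$. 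Consequently, by the adjunction between pushout and restriction along $\pi$, an $L^+\calG$-equivariant map $f \colon T \to \rmM^{\loc}$ is precisely a $\calG$-equivariant map $q \colon \calP_R \to \rmM^{\loc}$. This produces the desired natural equivalence $[L^+\calG \backslash \rmM^{\loc}](R) \simeq \Pair_{\calG,\bmmu}(R)$, and since the source is a stack, so is $\Pair_{\calG,\bmmu}$.

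The $m$-truncated statement is proved by the same steps, replacing $W$ by $W_m$, the group $L^+\calG$ by $L^{(m)}\calG$, and $\pi$ by the projection $L^{(m)}\calG \to \calG$ induced by $W_m(R) \to R$, together with the analogous equivalence between $L^{(m)}\calG$-torsors over $R$ and $\calG$-torsors over $W_m(R)$. Because $L^{(m)}\calG$ is a smooth affine group scheme of finite type over $\Zp$ and $\rmM^{\loc}$ is the $p$-completion of a flat projective $\calO_E$-scheme, the quotient $[L^{(m)}\calG \backslash \rmM^{\loc}]$ is a $p$-adic formal algebraic stack of finite presentation over $\Spf(\calO_E)$, which yields the last assertion.

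The main obstacle is the torsor equivalence in the second step. Witt vectors do not commute with fppf base change, so in general $W(R' \otimes_R R') \neq W(R') \otimes_{W(R)} W(R')$, and one must take care that the descent datum of an $L^+\calG$-torsor really matches a genuine fppf descent datum for a $\calG$-torsor on $\Spec W(R)$. This is precisely why the relevant notion of a $\calG$-torsor over $W(R)$ is the one trivialized by covers pulled back from $\Spec R$; with that convention the two descent problems coincide on the nose, and the remaining verifications (compatibility of the isomorphisms with base change in $R$ and with the $\calG$-structure of Remark~\ref{rmk:g-mu-pair-pair}) are formal.
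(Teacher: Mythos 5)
Your proof is correct and takes essentially the same route as the paper, whose entire proof is the statement that the lemma is immediate from the definitions: one unwinds the functor of points of $[L^+\calG \backslash \rmM^{\loc}]$ and identifies $L^+\calG$-torsors over $R$ with $\calG$-torsors over $W(R)$ (trivializable along covers pulled back from $\Spec(R)$), under which pushout along $L^+\calG \to \calG$ matches base change along $W(R) \to R$. Your explicit flagging of the Witt-vector descent subtlety (that $W(R' \otimes_R R') \neq W(R') \otimes_{W(R)} W(R')$ in general, so the torsor notion must be the one trivialized by covers coming from $\Spec(R)$) is exactly the point the paper's ``immediate'' suppresses.
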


\begin{proof}
    This is immediate from the definition.
\end{proof}

%\begin{proof}
%    The equivalence is given by $(\calP, q) \mapsto (L^{(\infty)} \calP, q')$ where $q'$ denotes the composition
%    \[
%        q' \colon L^{(\infty)} \calP \to \calG \otimes_{L^{(\infty)} \calG} L^{(\infty)} \calP \cong \overline{\calP} \xlongrightarrow{q} \rmM^{\loc}. \qedhere
%    \]
%\end{proof}

\begin{proposition} \label{prop:g-mu-m-1-tilde}
    There are unique functors
    \[
    \begin{array}{r c l}
        \curlybr[\big]{\text{$(\calG, \bmmu)$-pairs over $R$}}
        & \to
        & \curlybr[\big]{\text{$\calG$-torsors over $W(R)$}}, \\[1mm]
        (M, M_1)
        & \mapsto
        & \widetilde{M_1}
    \end{array}
    \]
    together with natural isomorphisms $\widetilde{M_1}[1/p] \to M^{\sigma}[1/p]$ that are compatible with base change in $R$ and with passing to pairs (without $(\calG, \bmmu)$-structure), see \Cref{rmk:g-mu-pair-pair} and \Cref{prop:m-1-tilde}.

    There exist unique functors
    \[
    \begin{array}{r c l}
        \curlybr[\big]{\text{$m$-truncated $(\calG, \bmmu)$-pairs over $R$}}
        & \to
        & \curlybr[\big]{\text{$\calG$-torsors over $W_n(R)$}}, \\[1mm]
        (M, M_1)
        & \mapsto
        & \widetilde{M_1}
    \end{array}
    \]
    that are compatible with base change and with passing from $(\calG, \bmmu)$-pairs to $m$-truncated $(\calG, \bmmu)$-pairs.
    These functors are also compatible with passing from $m$-truncated $(\calG, \bmmu)$-pairs to $m$-truncated pairs.

    When we restrict to complete Noetherian local $\calOE$-algebras $R$ with perfect residue field there also exist unique functors
    \[
    \begin{array}{r c l}
        \curlybr[\big]{\text{Dieudonné $(\calG, \bmmu)$-pairs over $R$}}
        & \to
        & \curlybr[\big]{\text{$\calG$-torsors over $\What(R)$}}, \\[1mm]
        (M, M_1)
        & \mapsto
        & \widetilde{M_1}
    \end{array}
    \]
    together with natural isomorphisms $\widetilde{M_1}[1/p] \to M^{\sigma}[1/p]$ that are compatible with base change in $R$ and with passing to Dieudonné pairs.
    This construction is compatible with passing from Dieudonné $(\calG, \bmmu)$-pairs to $(\calG, \bmmu)$-pairs.
\end{proposition}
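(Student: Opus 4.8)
The plan is to bootstrap from Pappas' functor $(\calP, q) \mapsto (\calP, q)^{\sim}$ on $p$-torsionfree base rings (\cite{pappas}) to arbitrary $R$ by fppf descent along the local model. Since $\calG$-torsors over $W(R)$ (and over $W_n(R)$, and over $\What(R)$) form fppf stacks in $R$, and since the construction is required to commute with base change, it suffices to exhibit the functor, together with its comparison isomorphism, on a suitable cover. For uniqueness I observe that the underlying module $\widetilde{M_1}$ and the comparison isomorphism $\widetilde{M_1}[1/p] \to M^{\sigma}[1/p]$ are already determined by \Cref{prop:m-1-tilde} via \Cref{rmk:g-mu-pair-pair}, so the only datum left to pin down is the $\calG$-structure on $\widetilde{M_1}$; two functors with the stated properties therefore agree as soon as they agree on a $p$-torsionfree atlas, where both are forced to equal Pappas' functor.

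For existence I use the presentation $\Pair_{\calG, \bmmu} \cong [L^+ \calG \backslash \rmM^{\loc}]$ from \Cref{lem:g-mu-pairs-stack}. The crucial point is that the whole \v{C}ech nerve of the atlas $\rmM^{\loc} \to \Pair_{\calG, \bmmu}$ is $p$-torsionfree: the scheme $\bbM^{\loc}$ is flat over $\calOE$, so its $p$-completion $\rmM^{\loc}$ has no $p$-torsion, and each term $(L^+ \calG)^{\times k} \times \rmM^{\loc}$ of the nerve is flat over $\Zp$ because $L^+ \calG = \varprojlim_n L^{(n)} \calG$ is a cofiltered limit of smooth affine $\Zp$-group schemes. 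Over the atlas the tautological $(\calG, \bmmu)$-pair consists of the trivial torsor together with $q = \id \colon \rmM^{\loc} \to \rmM^{\loc}$, and since its base is $p$-torsionfree, Pappas' functor applies and yields a $\calG$-torsor $\widetilde{M_1}$ refining the module of \Cref{prop:m-1-tilde} together with the comparison isomorphism. Applying the same functor over the (again $p$-torsionfree) higher terms of the nerve and invoking its functoriality in morphisms of pairs turns the $L^+ \calG$-action into a descent datum satisfying the cocycle condition; descending produces the sought functor over every $p$-complete $\calOE$-algebra $R$, and the compatibilities with base change and with passage to pairs are inherited from the atlas.

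The $m$-truncated case is identical after replacing $\Pair_{\calG, \bmmu}$ by $[L^{(m)} \calG \backslash \rmM^{\loc}]$ and composing with the base change $W(-) \to W_n(-)$; the groups $L^{(m)} \calG$ are smooth affine over $\Zp$, so the nerve is again $p$-torsionfree, and over the atlas the $m$-truncated pair is the truncation of the tautological untruncated pair, which makes the compatibility with passage from $(\calG, \bmmu)$-pairs to $m$-truncated $(\calG, \bmmu)$-pairs automatic. For Dieudonné $(\calG, \bmmu)$-pairs one argues in the same spirit, reducing to Pappas' Dieudonné construction over $p$-torsionfree complete Noetherian local $\calOE$-algebras with perfect residue field.

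I expect the main difficulty to be organizational rather than conceptual: one must verify carefully that every term of the relevant nerve really is $p$-torsionfree, so that Pappas' construction is genuinely available at each level and its functoriality does supply a descent datum compatible with the comparison isomorphism $\widetilde{M_1}[1/p] \to M^{\sigma}[1/p]$. The most delicate point is the Dieudonné variant, where the admissible base rings do not organize into a convenient fppf site; here one instead presents a general such $R$ as a quotient of a $p$-torsionfree regular complete local ring, for instance a power series ring over $W(k)$, and descends the $\calG$-structure along that presentation, which requires checking the requisite $p$-torsionfreeness for the Zink ring of the cover.
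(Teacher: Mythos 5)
Your treatment of the first two assertions (pairs over general $p$-complete $R$, and the truncated variant) is, in substance, the paper's own reduction: the paper likewise ``passes to the universal case'', i.e.\ constructs the $\calG$-structure on $\widetilde{M_1}$ equivariantly over the local model --- which is $p$-torsionfree by flatness of $\bbM^{\loc}$ --- and transports it to arbitrary $R$ by base change; the resulting equivariant object is exactly $\rmM^{\loc, (\infty)}$ of \Cref{rmk:mloc-infty}. The two differences are these. First, you black-box the $p$-torsionfree case by citing Pappas, while the paper re-proves it for completeness (reduction to $\calO_K$-points via Pappas' Cor.~3.2.6/Rmk.~3.2.7, then \Cref{lem:g-mu-m-1-tilde-key-lemma}: Breuil--Kisin modules, Beauville--Laszlo gluing, Ansch\"utz's purity); that is a legitimate shortcut. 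Second, your appeal to ``fppf descent'' is not quite right as stated: $W_n(-)$ and $W(-)$ do not carry faithfully flat maps of $R$'s to flat maps of Witt rings (already $W_2(\Fp) \to W_2(\Fp[t])$ fails to be flat), so $R \mapsto \{\text{$\calG$-torsors over $W(R)$}\}$ is not formally an fppf stack, and the atlas $\rmM^{\loc} \to \Pair_{\calG, \bmmu}$ is not an fppf cover. What one actually uses is that $L^+\calG$-torsors over affine bases are \emph{\'etale}-locally trivial (the kernels of $L^{(n+1)}\calG \to L^{(n)}\calG$ have vanishing $H^1$ on affines) together with \'etale descent for torsors over Witt vectors (Langer--Zink). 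This is a fixable imprecision, not a gap.

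The genuine gap is in your Dieudonn\'e case. You propose to realize a complete Noetherian local $\calOE$-algebra $R$ with perfect residue field $k$ as a quotient of a power series ring $S$ over $W(k)$ and to ``descend the $\calG$-structure along that presentation''. A surjection $S \to R$ is not a cover, so there is nothing to descend along: your plan in fact requires \emph{lifting} the Dieudonn\'e $(\calG, \bmmu)$-pair from $R$ to $S$, applying Pappas over $S$, and base changing back. Such a lift need not exist: the filtration $M_1$ is classified by an $R$-point of $\rmM^{\loc}$, and in the parahoric case $\rmM^{\loc}$ is \emph{not} formally smooth over $\calOE$, so the $R$-point need not lift along $S \to R$. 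This fails already for $R = k$: a $k$-point of the special fiber of $\bbM^{\loc}$ need not admit an unramified lift, and in general lifts only to $\calO_K$-points for suitable \emph{ramified} finite extensions $K/E$ --- which is precisely why the paper (following Kisin--Pappas) must argue via $\calO_K$-points and the key lemma rather than via an auxiliary smooth presentation. Moreover, even where lifts exist you would still have to show that the induced $\calG$-structure on $\widetilde{M_1}$ is independent of the chosen lift; comparisons of exactly this kind are the delicate issue the paper isolates (cf.\ \Cref{rmk:g-mu-grothendieck-messing} and Assumption \ref{hyp:grothendieck-messing}), so they cannot be waved through. The repair is the paper's move: do not choose an auxiliary presentation at all, but use the canonical one --- after trivializing the torsor (possible over such $R$ up to an unramified/\'etale extension), the pair is the base change of the tautological pair along the map from $\Spf$ of the completed local ring of $\rmM^{\loc}$ at the corresponding point, and that ring is itself $p$-torsionfree, complete Noetherian local with perfect residue field, so the $p$-torsionfree Dieudonn\'e construction applies to it directly and no lifting is needed.
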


\begin{proof}
    The statement about truncated $(\calG, \bmmu)$-pairs follows immediately from the one about non-truncated $(\calG, \bmmu)$-pairs.
    The remaining part of the claim is essentially \cite[Proposition 4.1.10]{pappas}; for completeness we give a sketch of the argument.

    After passing to the universal case and applying \cite[Corollary 3.2.6 and Remark 3.2.7]{pappas} we are reduced to considering Dieudonné $(\calG, \bmmu)$-pairs in the case $R = \calO_K$ for some finite extension $K/E$.
    We then conclude by the following key lemma that is a reformulation of \cite[Lemma 3.2.9]{kisin-pappas}.
\end{proof}

\begin{lemma} \label{lem:g-mu-m-1-tilde-key-lemma}
    Let $K/E$ be a complete discretely valued extension with perfect residue field and let $(M, M_1)$ be a Dieudonné $(\calG, \bmmu)$-pair over $\calO_K$.
    Then there exists a (unique) $\calG$-structure on $\widetilde{M_1}$ such that the isomorphism $\widetilde{M_1}[1/p] \to M^{\sigma}[1/p]$ is compatible with $\calG$-structures.
\end{lemma}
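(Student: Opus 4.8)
The plan is to split the statement into the (easy) uniqueness and the (substantial) existence of the $\calG$-structure, to reduce existence to the claim that the minuscule modification $\widetilde{M_1}$ of the $\calG$-lattice $M^{\sigma}$ is again a $\calG$-lattice, and to isolate the point where the hypothesis that $q$ factors through $\rmM^{\loc}$ (rather than through the ambient $\Grass_{\Lambda, d}$) is indispensable. Throughout I use that $\calO_K$, and hence $\What(\calO_K)$, is $p$-torsionfree, so that by \Cref{rmk:pairs-inclusions} we have canonical inclusions $p M^{\sigma} \subseteq \widetilde{M_1} \subseteq M^{\sigma}$, and that $M^{\sigma}$ carries a $\calG$-structure, namely the Frobenius twist of the one on $M$ (legitimate since $\calG$ is defined over $\Zp$).

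First I would dispatch \emph{uniqueness}. Since $\What(\calO_K)$ is $p$-torsionfree, the generic fibre $\Spec \What(\calO_K)[1/p]$ is schematically dense in $\Spec \What(\calO_K)$. As $\calG \subseteq \GL(\Lambda)$ is a closed subgroup, a reduction to $\calG$ of the $\GL(\Lambda)$-torsor underlying $\widetilde{M_1}$ is determined by its restriction to this dense open: any two reductions restricting to the same one over $\What(\calO_K)[1/p]$ differ by a comparison element lying in $\calG(\What(\calO_K)[1/p]) \cap \GL(\Lambda)(\What(\calO_K)) = \calG(\What(\calO_K))$, so they coincide. Thus there is at most one $\calG$-structure on $\widetilde{M_1}$ compatible with the given one on $\widetilde{M_1}[1/p] = M^{\sigma}[1/p]$, and only existence remains.

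For \emph{existence} I would first make the modification explicit. Choosing a normal decomposition $(L, T)$ of $(M, M_1)$ as in \Cref{rmk:normal-decomposition} and unwinding the construction in the proof of \Cref{prop:m-1-tilde}, one finds that, under the canonical identification $\widetilde{M_1}[1/p] = M^{\sigma}[1/p]$, the lattice $\widetilde{M_1}$ equals $L^{\sigma} \oplus p T^{\sigma}$ inside $M^{\sigma}[1/p] = L^{\sigma}[1/p] \oplus T^{\sigma}[1/p]$. Equivalently $\widetilde{M_1} = \lambda(p) \cdot M^{\sigma}$, where $\lambda$ is the cocharacter having $L^{\sigma}$ and $T^{\sigma}$ as its weight-$0$ and weight-$1$ spaces and which (up to the standard sign and Frobenius twist) represents $\bmmu$. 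Hence it suffices to arrange that the normal decomposition be $\calG$-split, i.e.\ that $\lambda$ be a cocharacter of $\calG$ over $\What(\calO_K)$: given this, $\lambda(p) \in \calG(\What(\calO_K)[1/p])$ is an automorphism of $M^{\sigma}[1/p]$ carrying the $\calG$-lattice $M^{\sigma}$ onto $\widetilde{M_1}$, and the transported $\calG$-structure is the one we seek.

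So the whole matter reduces to splitting the Hodge filtration by a cocharacter of $\calG$. Over the generic fibre this is automatic: after trivialising $\calP$, the filtration is classified by the point $q \in \rmM^{\loc}(\calO_K)$, whose generic fibre lies in $X_{\bmmu}(G)(K)$ and hence cuts out a parabolic of $G_K$ of type $\bmmu^{-1}$, which is split by a cocharacter in $\bmmu$. \emph{The main obstacle} I expect is to propagate this across the special fibre $V(p) \subseteq \Spec \What(\calO_K)$, that is, to see that the $\GL(\Lambda)$-modification $\widetilde{M_1} \subseteq M^{\sigma}[1/p]$ reduces to $\calG$ integrally and not merely generically. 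This is exactly where one must use that $q$ factors through the local model $\rmM^{\loc}$ rather than through the ambient $\Grass_{\Lambda, d}$: combined with the minuscule hypothesis on $\bmmu$, this forces the relative position of $\widetilde{M_1}$ and $M^{\sigma}$ to be a genuine coweight of $\calG$, so that the modification is of $\calG$-type and the splitting cocharacter $\lambda$ can indeed be taken in $\calG$ over $\What(\calO_K)$. Making this precise is the reformulation of \cite[Lemma 3.2.9]{kisin-pappas}, whose proof, resting on the theory of local models and the good behaviour of minuscule $\calG$-lattices over the complete discrete valuation ring $\calO_K$, I would invoke to conclude.
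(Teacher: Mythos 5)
Your uniqueness argument is fine, and so is your identification of the modification: under the canonical map one has $\widetilde{M_1} = L^{\sigma} \oplus p\,T^{\sigma}$ inside $M^{\sigma}[1/p]$. The gap is in the reduction you make next: you reduce existence to the claim that the normal decomposition can be split by a cocharacter $\lambda$ of $\calG$ defined \emph{over $\What(\calO_K)$}, and this claim is false precisely in the cases the lemma is about. Indeed, reducing such a $\lambda$ modulo $\Ihat_{\calO_K}$ and then modulo $\frakm_K$ would yield a cocharacter of $\calG_k$ splitting the special fibre of the point $q \in \rmM^{\loc}(\calO_K)$. Take $\calG$ an Iwahori group scheme of $\GL_2$ with the standard minuscule $\bmmu$: the special fibre of $\bbM^{\loc}$ is two projective lines crossing at a point $x$, and $x$ is fixed by $\calG_k$. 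Since $\calG_k$ is connected solvable, every cocharacter of $\calG_k$ is $\calG_k(\bar{k})$-conjugate into the diagonal torus; a diagonal cocharacter splitting $x$ would have to satisfy contradictory weight inequalities on the two members of the lattice chain, and by the fixed-point property conjugation does not help, so no cocharacter of $\calG_k$ splits $x$. As there are $\calO_K$-points of $\bbM^{\loc}$ (for ramified $K/\Qp$) with generic fibre in the open orbit and special fibre $x$, the corresponding Dieudonné $(\calG,\bmmu)$-pair admits no $\calG$-split normal decomposition. So factoring through $\rmM^{\loc}$ plus minusculeness does \emph{not} force the relative position of $\widetilde{M_1}$ and $M^{\sigma}$ to be $\lambda(p)$ for an integral cocharacter of $\calG$; your reduction is exactly why the hyperspecial (reductive) case of Bültel--Pappas is easy, but it breaks for genuinely parahoric $\calG$. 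Compounding this, your last step defers to \cite[Lemma 3.2.9]{kisin-pappas}, which \emph{is} the statement being proved (the paper introduces the present lemma as a reformulation of it), and whose content is the existence of the $\calG$-structure itself, not the integral-splitting claim you want to extract from it; so the argument is circular where it is not false.

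The paper's actual proof is structured to avoid any global splitting over $\What(\calO_K)$. It descends the pair to a Breuil--Kisin-type module $(N, N_1)$ over the two-dimensional regular local ring $\frakS = W(k)\doublesquarebr{u}$ via $u \mapsto [\varpi_K]$ (the computation $\sigma^{\divd}(1 \otimes P([\varpi_K])) \in \What(\calO_K)^{\times}$ is what guarantees that $\What(\calO_K) \otimes_{\frakS} N_1^{\sigma}$ recovers $\widetilde{M_1}$). It then uses a representative $\mu$ of $\bmmu$ to define a $\calG$-structure only after completing $\frakS[1/p]$ along $P(u)$ --- this is where your correct observation that the splitting exists generically enters, since there one is essentially over the field $K$ --- glues by Beauville--Laszlo with the tautological $\calG$-structure away from $P(u)$, and finally extends from $N_1[1/p]$ and $N_1[1/P(u)]$ to all of $N_1$ by Anschütz's purity theorem \cite[Proposition 10.3]{anschuetz}. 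The passage to $\frakS$ is not cosmetic: purity of $\calG$-torsors over a two-dimensional regular local ring is what replaces the splitting cocharacter you were hoping for, and no such tool is available over $\What(\calO_K)$ itself.
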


\begin{proof}
    After possibly enlarging $K$ we may assume that there exists a trivialization $M \cong \Lambda_{\What(\calO_K)}$ and that there exists a representative $\mu \colon \bfG_{m, K} \to G_K$ of $\bmmu$ that gives rise to the point in $\bbM^{\loc}(K) \cong \rmM^{\loc}(\calO_K)$ corresponding to $M_1$.

    At this point we set up some notation.
    Let $k$ be the residue field of $K$ and write $W \coloneqq W(k)$ and $K_0 \coloneqq W[1/p]$.
    Set $\frakS \coloneqq W \doublesquarebr{u}$ and let $\sigma \colon \frakS \to \frakS$ be the Frobenius lift given by
    \[
        \sigma \roundbr[\Big]{\sum_i a_i u^i} \coloneqq \sum_i \sigma(a_i) u^{pi}.
    \]
    Fix a uniformizer $\varpi_K \in K$ and let $P(u) \in \frakS$ be its minimal Eisenstein polynomial over $K_0$.
    Then we have the $\sigma$-equivariant morphism
    \[
        \alpha \colon \frakS \to \What(\calO_K), \qquad \sum_i a_i u^i \mapsto \sum_i a_i [\varpi_K^i].
    \]

    Note that we have $\alpha(P(u)) = P([\varpi_K]) \in \Ihat_{\calO_K}$ because its image in $\calO_K$ is $P(\varpi_K) = 0$.
    Moreover we have
    \[
        \sigma^{\divd} (1 \otimes P([\varpi_K])) \in \What(\calO_K)^{\times}.
    \]
    Indeed its image in $W$ is given by $\sigma^{\divd}(1 \otimes b_0) \in W^{\times}$ where $b_0 \in p W^{\times}$ is the zeroth coefficient of $P(u)$.
    Thus we see that
    \[
        \sigma \roundbr[\big]{P([\varpi_K])} \What(\calO_K) = p \What(\calO_K).
    \]

    Now set $N \coloneqq \Lambda_{\frakS}$ and let $N_1 \subseteq N$ be the preimage of the direct summand of $N/P(u) N \cong \Lambda_{\calO_K}$ corresponding to the given point in $M^{\loc}(\calO_K)$.
    Then $N_1$ is again a free $\frakS$-module of rank $h$ and the above computation implies that the tautological isomorphism $\What(R) \otimes_{\frakS} N^{\sigma} \cong M^{\sigma}$ restricts to an isomorphism $\What(R) \otimes_{\frakS} N_1^{\sigma} \cong \widetilde{M_1}$.
    Thus to prove the claim it suffices to show that there exists a $\calG$-structure on $N_1$ such that the isomorphism $N_1[1/P(u)] \cong N[1/P(u)]$ is compatible with $\calG$-structures.

    Next we observe that the morphism $\frakS \to \frakS/P(u) \frakS \cong \calO_K$ induces a morphism
    \[
        \widehat{\frakS}_0 \coloneqq \frakS[1/p]^{\wedge}_{P(u)} \to K
    \]
    that admits a unique section by Hensel's Lemma, making $\widehat{\frakS}_0$ into a $K$-algebra.
    Thus it makes sense to consider the element $P(u) \mu(P(u)) \in G (\widehat{\frakS}_0[1/P(u)])$.
    By definition of $N_1$ this element induces an isomorphism
    \[
        P(u) \mu(P(u)) \colon \widehat{\frakS}_0 \otimes_{\frakS} N \to \widehat{\frakS}_0 \otimes_{\frakS} N_1
    \]
    that we use to define a $\calG$-structure on $\widehat{\frakS}_0 \otimes_{\frakS} N_1$.
    Over $\widehat{\frakS}_0[1/P(u)]$ this $\calG$-structure agrees with the one coming from the isomorphism $N_1[1/P(u)] \cong N[1/P(u)]$ so that we can apply Beauville-Laszlo gluing (see for example \cite[Tag 05ET]{stacks-project}) to obtain a $\calG$-structure on $N_1[1/p]$ that is compatible with the given one on $N_1[1/P(u)]$.
    By the purity result \cite[Proposition 10.3]{anschuetz} by Anschütz it follows that these $\calG$-structures extend to a $\calG$-structure on $N_1$ as desired.
\end{proof}

\begin{remark} \label{rmk:g-mu-m-1-tilde-independent-iota}
    The functors $(M, M_1) \mapsto \widetilde{M_1}$ from \Cref{prop:g-mu-m-1-tilde} are independent of the choice of embedding $\iota$ (see \cite[Remark 4.1.11]{pappas}).
\end{remark}

\begin{remark} \label{rmk:g-mu-m-1-tilde-pel}
    Let $(M, M_1)$ be a $(\calG, \bmmu)$-pair.
    The definition of the $\calG$-structure on $\widetilde{M_1}$ is rather indirect.
    When $W(R)$ is $p$-torsionfree then it is given by extending the given $\calG$-structure on $\widetilde{M_1}[1/p] \cong M^{\sigma}[1/p]$, but if that is not the case then one first needs to lift $(M, M_1)$ to some $(\calG, \bmmu)$-pair over a $p$-complete $\calOE$-algebra $R' \to R$ such that $W(R')$ is $p$-torsionfree.

    Now suppose that $\calG \subseteq \GL(\Lambda)$ is cut out by a family of endomorphisms and (possibly) a homogeneous polarization (this includes a lot of PEL-cases).
    Then the situation is much nicer in the sense that it is possible to give an explicit description of the $\calG$-structure on $\widetilde{M_1}$, in particular without having to lift to a situation where $W(R)$ is $p$-torsionfree:
    The $\calG$-structure on a $(\calG, \bmmu)$-pair $(M, M_1)$ is encoded by a family of endomorphisms and a homogeneous polarization on that pair $(M, M_1)$.
    By functoriality (and using Subsection \ref{sec:duals-twists}) these then induce a family of endomorphisms and a homogeneous polarization on $\widetilde{M_1}$ that encode the $\calG$-structure from \Cref{prop:g-mu-m-1-tilde}.
    The same is also true for truncated $(\calG, \bmmu)$-pairs and Dieudonné $(\calG, \bmmu)$-pairs.

    In general $\calG \subseteq \GL(\Lambda)$ will be cut out by a family of tensors, each one of which is an endomorphism of $\Lambda^{\otimes \ell}$ for some $\ell$.
    Now the problem with generalizing the approach above seems to be that in general there is no good way to produce an endomorphism of $\widetilde{M_1}^{\otimes \ell}$ from an endomorphism of $M^{\otimes \ell}$ (that maybe preserves some sort of filtration that is induced from $M_1 \subseteq M$).
\end{remark}

\begin{remark} \label{rmk:mloc-infty}
    By \Cref{lem:g-mu-pairs-stack} the datum of the functors
    \[
        \curlybr[\big]{\text{$(\calG, \bmmu)$-pairs over $R$}} \to \curlybr[\big]{\text{$\calG$-torsors over $W(R)$}}
    \]
    from \Cref{prop:g-mu-m-1-tilde} for varying $R$ can be equivalently described as an $L^+ \calG$-equivariant $L^+ \calG$-torsor over $\rmM^{\loc}$ that we denote by $\rmM^{\loc, (\infty)}$.

    We write $\rmM^{\loc, (n)}$ for the reduction of this $L^+ \calG$-torsor to an $L^{(n)} \calG$-torsor.
    The $L^+ \calG$-equivariant structure on $\rmM^{\loc, (n)}$ factors through $L^{(n + 1)} \calG$.

    We also write $\rmM^{\loc, (\onerdt)}$ for the reduction of $\rmM^{\loc, (\infty)}_{\Fq}$ to an $L^{(\onerdt)} \calG$-torsor over $\rmM^{\loc}_{\Fq}$ (where we recall that $L^{(\onerdt)} \calG = \calG_{\Fp}^{\rdt}$ denotes the reductive quotient of the special fiber of $\calG$).
    The $L^+ \calG$-equivariant structure on $\rmM^{\loc, (\onerdt)}$ factors through $L^{(2)} \calG$.
\end{remark}

\begin{definition} \label{def:g-mu-disps}
    A \emph{$(\calG, \bmmu)$-display over $R$} is a tuple $(M, M_1, \Psi)$ where $(M, M_1)$ is a $(\calG, \bmmu)$-pair over $R$ and $\Psi \colon \widetilde{M_1} \to M$ is an isomorphism of $\calG$-torsors over $W(R)$.

    An \emph{$(m, n)$-truncated $(\calG, \bmmu)$-display over $R$} is a tuple $(M, M_1, \Psi)$ where $(M, M_1)$ is an $m$-truncated $(\calG, \bmmu)$-pair over $R$ and $\Psi \colon \widetilde{M_1} \to M_{W_n(R)}$ is an isomorphism of $\calG$-torsors over $W_n(R)$.

    Similarly, when $R$ is an $\Fq$-algebra, an \emph{$(m, \onerdt)$-truncated $(\calG, \bmmu)$-display over $R$} is a tuple $(M, M_1, \Psi)$ where $(M, M_1)$ is an $m$-truncated $(\calG, \bmmu)$-pair over $R$ and $\Psi \colon \calG_{\Fp}^{\rdt} \times^{\calG_{\Fp}} (\widetilde{M_1})_R \to \calG_{\Fp}^{\rdt} \times^{\calG_{\Fp}} M_R$ is an isomorphism of $\calG_{\Fp}^{\rdt}$-torsors over $R$.

    Let $R$ be a complete Noetherian local $\calO_{\Ebrev}$-algebra with residue field $\Fpbar$.
    A \emph{Dieudonné $(\calG, \bmmu)$-display over $R$} is a tuple $(M, M_1, \Psi)$ where $(M, M_1)$ is a Dieudonné $(\calG, \bmmu)$-pair over $R$ and $\Psi \colon \widetilde{M_1} \to M$ is an isomorphism of $\calG$-torsors over $\What(R)$.
\end{definition}

\begin{remark}
    Giving a $(\calG, \bmmu)$-display over $R$ is the same as giving a display $(M, M_1, \Psi)$ of type $(h, d)$ over $R$ (see \Cref{def:display}) together with a $\calG$-structure on $M$ such that $(M, M_1)$ is actually a $(\calG, \bmmu)$-pair and the isomorphism $\Psi \colon \widetilde{M_1} \to M$ is compatible with the $\calG$-structures on both sides.

    In the situation where $W(R)$ is $p$-torsionfree this last condition is equivalent to saying that the Frobenius $\Phi \colon M^{\sigma}[1/p] \to M[1/p]$ (see \Cref{def:display-frobenius}) is compatible with the $\calG$-structures on both sides.

    The same remark also applies to Dieudonné $(\calG, \bmmu)$-displays.
\end{remark}

\begin{lemma} \label{lem:g-mu-disps-stack}
    The groupoids of $(\calG, \bmmu)$-displays over varying $p$-complete $\calO_E$-algebras naturally form a stack $\Disp_{\calG, \bmmu}$ over $\Spf(\calO_E)$ and we have a natural equivalence
    \[
        \Disp_{\calG, \bmmu} \cong \squarebr[\big]{(L^+ \calG)_{\Delta} \backslash \rmM^{\loc, (\infty)}}
    \]
    where the subscript $\Delta$ indicates that we take the quotient by the diagonal action of $L^+ \calG$ on $\rmM^{\loc, (\infty)}$.

    Similarly, also $(m, n)$-truncated $(\calG, \bmmu)$-displays form a $p$-adic formal algebraic stack $\Disp_{\calG, \bmmu}^{(m, n)}$ and we have a natural equivalence
    \[
        \Disp_{\calG, \bmmu}^{(m, n)} \cong [(L^{(m)} \calG)_{\Delta} \backslash \rmM^{\loc, (n)}].
    \]
    This also applies when $n = \onerdt$, in which case $\Disp_{\calG, \bmmu}^{(m, \onerdt)}$ is an algebraic stack over $\Spec(\Fq)$.
\end{lemma}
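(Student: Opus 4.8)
The plan is to realize $\Disp_{\calG, \bmmu}$ as a stack of isomorphisms over $\Pair_{\calG, \bmmu}$ and then to compute this isomorphism stack by descent along the atlas $\rmM^{\loc} \to \Pair_{\calG, \bmmu}$. By \Cref{lem:g-mu-pairs-stack} we have $\Pair_{\calG, \bmmu} \cong [L^+ \calG \backslash \rmM^{\loc}]$, and this stack carries two tautological $L^+ \calG$-torsors: the universal torsor $\calP^{\univ}$, whose fiber at a $(\calG, \bmmu)$-pair $(M, M_1)$ is $M$, and the torsor $\widetilde{M_1}^{\univ}$ obtained by applying the functor of \Cref{prop:g-mu-m-1-tilde}. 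By \Cref{rmk:mloc-infty} the latter is, by its very definition, the $L^+ \calG$-torsor $\rmM^{\loc, (\infty)}$ viewed over $\Pair_{\calG, \bmmu}$ through its $L^+ \calG$-equivariant structure. Unwinding \Cref{def:g-mu-disps}, a $(\calG, \bmmu)$-display is exactly a point of $\Pair_{\calG, \bmmu}$ together with an isomorphism $\widetilde{M_1}^{\univ} \to \calP^{\univ}$ of $\calG$-torsors, so that
\[
    \Disp_{\calG, \bmmu} \cong \underline{\operatorname{Isom}}_{\Pair_{\calG, \bmmu}}\roundbr[\big]{\widetilde{M_1}^{\univ}, \calP^{\univ}}.
\]

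To identify the right-hand side, I would pull back along the atlas $\pi \colon \rmM^{\loc} \to \Pair_{\calG, \bmmu}$, which is itself an $L^+ \calG$-torsor. Over $\rmM^{\loc}$ the torsor $\pi^* \calP^{\univ}$ is canonically trivialized, while $\pi^* \widetilde{M_1}^{\univ}$ is identified with the total space $\rmM^{\loc, (\infty)}$ together with its fiberwise torsor action. Since for any $L^+ \calG$-torsor $\calQ$ the sheaf $\underline{\operatorname{Isom}}(\calQ, \text{trivial})$ is represented by $\calQ$ itself, the pullback of the isomorphism stack along $\pi$ is represented by $\rmM^{\loc, (\infty)}$. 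It remains to descend this identification along $\pi$: an element $g \in L^+ \calG$ acting on the atlas simultaneously retrivializes $\pi^* \calP^{\univ}$ (which translates the represented point by the fiberwise torsor action) and transports $\pi^* \widetilde{M_1}^{\univ} = \rmM^{\loc, (\infty)}$ through its $L^+ \calG$-equivariant structure (which translates the point by the base action). The combined action is exactly the diagonal one, whence $\Disp_{\calG, \bmmu} \cong [(L^+ \calG)_{\Delta} \backslash \rmM^{\loc, (\infty)}]$.

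The truncated cases run along identical lines, starting from $\Pair^{(m)}_{\calG, \bmmu} \cong [L^{(m)} \calG \backslash \rmM^{\loc}]$ and replacing $\rmM^{\loc, (\infty)}$ by $\rmM^{\loc, (n)}$, which is an $L^{(n)} \calG$-torsor carrying an $L^{(m)} \calG$-equivariant structure factoring through $L^{(n+1)} \calG$. The diagonal $L^{(m)} \calG$-action is well-defined precisely because $m \geq n + 1$: the base action is pulled back along $L^{(m)} \calG \to L^{(n+1)} \calG$ and the fiberwise action along $L^{(m)} \calG \to L^{(n)} \calG$. For $n = \onerdt$ the same argument works over $\Fq$ with $\rmM^{\loc, (\onerdt)}$, whose equivariant structure factors through $L^{(2)} \calG$, using $m \geq 2$. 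Since $\rmM^{\loc, (n)}$ is affine over the finitely presented $p$-adic formal scheme $\rmM^{\loc}$ and $L^{(m)} \calG$ is smooth affine of finite presentation, the resulting quotient is a $p$-adic formal algebraic stack of finite presentation over $\Spf(\calO_E)$, and an algebraic stack of finite presentation over $\Fq$ in the case $n = \onerdt$.

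The main obstacle is the careful bookkeeping of the two commuting $L^+ \calG$-actions on $\rmM^{\loc, (\infty)}$ and the verification that the descent of the $\underline{\operatorname{Isom}}$-stack genuinely recombines them into the diagonal action, rather than yielding one of the two single actions; concretely this amounts to checking that the $L^+ \calG$-equivariant structure of \Cref{rmk:mloc-infty} matches the descent datum of $\calP^{\univ}$ on the nose. Everything else — the representability of $\underline{\operatorname{Isom}}(-, \text{trivial})$ by the torsor and the finiteness and smoothness bookkeeping in the truncated cases — is formal once the two actions have been correctly matched.
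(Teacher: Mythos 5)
Your proposal is correct and is essentially the paper's own (implicit) argument: the paper states this lemma without proof, treating it as immediate from the definitions once \Cref{rmk:mloc-infty} repackages the functor $(M, M_1) \mapsto \widetilde{M_1}$ as the $L^+ \calG$-equivariant $L^+ \calG$-torsor $\rmM^{\loc, (\infty)}$ over $\rmM^{\loc}$. Your Isom-stack/descent computation is precisely the verification the paper leaves to the reader, including the two points that genuinely need checking --- that the retrivialization of $\calP^{\univ}$ and the equivariant structure on $\rmM^{\loc, (\infty)}$ combine into the diagonal action, and that the truncated diagonal action is well defined exactly because $m \geq n + 1$ (resp.\ $m \geq 2$ when $n = \onerdt$).
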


\begin{remark}
    Suppose that we are given a second local model datum $(\calG', \bmmu')$ and a closed immersion $(\calG, \bmmu) \to (\calG', \bmmu')$.
    Suppose furthermore that we have a closed immersion $\iota' \colon \calG' \to \GL(\Lambda)$ that satisfies the assumptions from Notation \ref{not:g-mu-displays-group-setup} and makes the diagram
    \[
    \begin{tikzcd}
        \calG \ar[rr] \ar[rd, swap, "\iota"]
        && \calG' \ar[ld, "\iota'"]
        \\
        & \GL(\Lambda)
    \end{tikzcd}
    \]
    commutative.

    Then there are natural functors
    \[
        \curlybr[\big]{\text{$(\calG, \bmmu)$-pairs over $R$}} \to \curlybr[\big]{\text{$(\calG', \bmmu')$-pairs over $R$}}
    \]
    and the construction $(M, M_1) \mapsto \widetilde{M_1}$ is compatible with these change-of-group functors.
    Consequently we also obtain natural functors
    \[
        \curlybr[\big]{\text{$(\calG, \bmmu)$-displays over $R$}} \to \curlybr[\big]{\text{$(\calG', \bmmu')$-displays over $R$}}.
    \]
    The same remark also applies to the truncated and Dieudonné versions of displays.
\end{remark}

\begin{lemma} \label{lem:restriction-smooth}
    For $(m, n) \leq (m', n')$ the natural forgetful morphism
    \[
        \Disp_{\calG, \bmmu}^{(m', n')} \to \Disp_{\calG, \bmmu}^{(m, n)}
    \]
    is smooth.
    This is also true when $n$ and/or $n'$ take the value $\onerdt$, in which case the domain of the morphism may need to be base changed to $\Fq$.
\end{lemma}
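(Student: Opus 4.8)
The plan is to work with the quotient-stack presentation $\Disp_{\calG,\bmmu}^{(m,n)} \cong [(L^{(m)}\calG)_\Delta \backslash \rmM^{\loc,(n)}]$ from \Cref{lem:g-mu-disps-stack} and to split the forgetful morphism into two morphisms of quotient stacks, one that only lowers the torsor level and one that only shrinks the acting group. Concretely, I would factor it as
\[
    \Disp_{\calG,\bmmu}^{(m',n')} \to \Disp_{\calG,\bmmu}^{(m',n)} \to \Disp_{\calG,\bmmu}^{(m,n)},
\]
where the intermediate stack is legitimate because $m' \geq n'+1 \geq n+1$ (and $m' \geq 2$ in the $\onerdt$ case); if any of $n, n'$ equals $\onerdt$ the whole diagram is formed over $\Fq$, base changing the domain as necessary. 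The first arrow is induced by the $L^{(m')}\calG$-equivariant reduction-of-torsor map $\rmM^{\loc,(n')} \to \rmM^{\loc,(n)}$ of \Cref{rmk:mloc-infty}, and the second is the group-change map attached to the truncation surjection $L^{(m')}\calG \twoheadrightarrow L^{(m)}\calG$. The action of $L^{(m')}\calG$ on $\rmM^{\loc,(n)}$ indeed descends to $L^{(m)}\calG$: its torsor part factors through $L^{(n)}\calG$ and its equivariant part through $L^{(n+1)}\calG$ (resp.\ $L^{(2)}\calG$), and $L^{(m)}\calG$ surjects onto these since $m \geq n+1$ (resp.\ $m \geq 2$).

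For the first arrow I would use that, for a $G$-equivariant morphism $f \colon Y' \to Y$, the square
\[
\begin{tikzcd}
    Y' \ar[r, "f"] \ar[d] & Y \ar[d] \\
    {[G\backslash Y']} \ar[r] & {[G\backslash Y]}
\end{tikzcd}
\]
with vertical quotient ($G$-torsor) maps is Cartesian, so that smoothness of the bottom arrow may be checked after the smooth surjective base change $Y \to [G\backslash Y]$ and reduces to smoothness of $f$. Here $f$ is the reduction of the $L^{(n')}\calG$-torsor $\rmM^{\loc,(n')}$ to the $L^{(n)}\calG$-torsor $\rmM^{\loc,(n)}$ (or the further reduction to the $\calG_{\Fp}^{\rdt}$-torsor $\rmM^{\loc,(\onerdt)}$ over $\Fq$), hence a torsor under the kernel of the corresponding surjection of loop groups; since this kernel is smooth, $f$ is smooth.

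For the second arrow I would use the complementary fact that for a surjection $G' \twoheadrightarrow G$ of smooth affine group schemes with kernel $K$, with $G$ acting on $Y$ and $G'$ acting through $G' \to G$, the base change of $[G'\backslash Y] \to [G\backslash Y]$ along $Y \to [G\backslash Y]$ is the projection $Y \times BK \to Y$, where $BK$ is the classifying stack of $K$; this is smooth precisely when $BK \to \Spec$ is smooth, i.e.\ when $K$ is smooth. Applying this with $G' = L^{(m')}\calG$, $G = L^{(m)}\calG$, and $K = \ker(L^{(m')}\calG \to L^{(m)}\calG)$ finishes the second step.

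Thus everything rests on the smoothness of the relevant kernels, and this is the main point to secure: the kernels of the truncation maps $L^{(m')}\calG \to L^{(m)}\calG$ and $L^{(n')}\calG \to L^{(n)}\calG$, as well as (in the $\onerdt$ case) the unipotent radical $R_u(\calG_{\Fp})$ appearing in $\calG_{\Fp} \to \calG_{\Fp}^{\rdt}$, are all smooth affine group schemes. This follows from the smoothness of $\calG$ over $\Zp$ together with the standard description of the graded pieces of the Witt-vector loop groups as Frobenius twists of vector groups, but it is exactly the place where the geometry of $\calG$ enters. A secondary, purely bookkeeping obstacle is to keep every intermediate index pair inside the admissible region $m \geq n+1$ (resp.\ $m \geq 2$ for $\onerdt$) and to base change to $\Fq$ precisely when a value $\onerdt$ occurs.
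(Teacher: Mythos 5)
Your proof is correct and takes essentially the same route as the paper: the paper's one-line proof invokes the smoothness of the projection $\rmM^{\loc, (n')} \to \rmM^{\loc, (n)}$, and the remark immediately following the lemma records exactly your factorization $\Disp_{\calG, \bmmu}^{(m', n')} \to \Disp_{\calG, \bmmu}^{(m', n)} \to \Disp_{\calG, \bmmu}^{(m, n)}$ into an affine bundle followed by a gerbe banded by a smooth (unipotent) group. Your write-up merely makes explicit the descent along $Y \to [G \backslash Y]$ and the smoothness of the relevant kernels, which the paper leaves implicit.
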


\begin{proof}
    This follows directly from the smoothness of the projection $\rmM^{\loc, (n')} \to \rmM^{\loc, (n)}$.
\end{proof}

\begin{remark}
    In fact one can be more precise than just saying that the morphism $\Disp_{\calG, \bmmu}^{(m', n')} \to \Disp_{\calG, \bmmu}^{(m, n)}$ is smooth.
    It factors as
    \[
        \Disp_{\calG, \bmmu}^{(m', n')} \to \Disp_{\calG, \bmmu}^{(m', n)} \to \Disp_{\calG, \bmmu}^{(m, n)}
    \]
    where the first morphism is an affine bundle and the second one is a gerbe banded by a unipotent group.
\end{remark}

\begin{lemma} \label{lem:restriction-smooth-dieudonne}
    Let $R' \to R$ be a surjection of complete Noetherian local $\calO_{\Ebrev}$-algebras with residue field $\Fpbar$.
    Suppose that we are given a Dieudonné $(\calG, \bmmu)$-display $(M, M_1, \Psi)$ over $R$, an $(m, n)$-truncated $(\calG, \bmmu)$-display $(\overline{M'}, \overline{M'_1}, \overline{\Psi'})$ over $R'$ and an isomorphism between the two induced $(m, n)$-truncated $(\calG, \bmmu)$-displays over $R$.
    Then $(M, M_1, \Psi)$ and $(\overline{M'}, \overline{M'_1}, \overline{\Psi'})$ admit a compatible lift to a Dieudonné $(\calG, \bmmu)$-display over $R'$.

    The statement is also true for $n = \onerdt$ when $R, R'$ are $\Fpbar$-algebras.
\end{lemma}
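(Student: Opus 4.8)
The plan is to lift in two stages, first the underlying $(\calG,\bmmu)$-pair and then the isomorphism $\Psi$; the only structural inputs are that $\calG$ is smooth affine over $\Zp$ (being parahoric) and the compatibility of the functor $(M,M_1)\mapsto\widetilde{M_1}$ with truncation and base change recorded in \Cref{prop:g-mu-m-1-tilde}. The first observation is that the filtration morphism costs nothing: by \Cref{def:g-mu-pairs} the morphism $q$ of a (Dieudonné, truncated) $(\calG,\bmmu)$-pair lives over the base ring itself, and for the truncated object over $R'$ it is already a map $\overline{M'}_{R'}\to\rmM^{\loc}$ defined over $R'$. Hence once the torsor has been lifted compatibly, the map $q'$ is simply inherited from the truncated datum, and its reduction to $R$ agrees with $q$ via the given isomorphism of induced truncated displays.

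To lift the torsor I would first record the ring-theoretic facts that drive everything. Since $R'\to R$ is a surjection of complete Noetherian local rings with perfect residue field, the maps $\What(R')\to\What(R)$, $W_m(R')\to W_m(R)$ and $\What(R)\to W_m(R)$ are all surjective (for the last, lift a truncated Witt vector by padding with zeros). I would then form $A:=\What(R)\times_{W_m(R)}W_m(R')$ and glue $M$ over $\What(R)$ with $\overline{M'}$ over $W_m(R')$ — which agree over $W_m(R)$ via the pair part of the given isomorphism, used here as gluing datum — into a single $\calG$-torsor over $A$ by Milnor patching, valid because $\What(R)\to W_m(R)$ is surjective and $\calG$ is flat affine. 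The canonical map $\What(R')\to A$ is surjective (lift a class in $\What(R)$, then correct the discrepancy in $W_m(R')$ by a tail-zero Witt lift landing in $\ker(\What(R')\to\What(R))$), and its kernel is built from the Witt components of index $\geq m$ of the topologically nilpotent ideal $\ker(R'\to R)\subseteq\frakm_{R'}$. As $\What(R')$ is complete and $\calG$ is smooth, the torsor over $A$ lifts along $\What(R')\to A$, producing a $\calG$-torsor $M'$ over $\What(R')$ restricting to $M$ and to $\overline{M'}$; equipping it with $q'$ as above yields a Dieudonné $(\calG,\bmmu)$-pair over $R'$ lifting both.

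It remains to lift the isomorphism. By \Cref{prop:g-mu-m-1-tilde} the torsor $\widetilde{M'_1}$ is already determined and base-changes to $\widetilde{M_1}$ over $\What(R)$ and to $\widetilde{\overline{M'_1}}$ over $W_n(R')$, so the scheme $Y:=\underline{\mathrm{Isom}}(\widetilde{M'_1},M')$ of isomorphisms of $\calG$-torsors is a smooth affine $\What(R')$-scheme (a form of $\calG$) whose relevant base changes receive the points $\Psi$ and $\overline{\Psi'}$, which agree over $W_n(R)$ by the $\Psi$-part of the given isomorphism. Because $Y$ is affine we have $Y(\What(R)\times_{W_n(R)}W_n(R'))=Y(\What(R))\times_{Y(W_n(R))}Y(W_n(R'))$, so $(\Psi,\overline{\Psi'})$ determines a point of $Y$ over $\What(R)\times_{W_n(R)}W_n(R')$; smoothness of $Y$ together with completeness of $\What(R')$ then lifts it along the surjection $\What(R')\to\What(R)\times_{W_n(R)}W_n(R')$ exactly as in the previous step. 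Any such lift is automatically an isomorphism since it is one modulo the radical, so the resulting tuple $(M',M'_1,\Psi')$ is the sought Dieudonné $(\calG,\bmmu)$-display over $R'$.

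The main obstacle is the ring-theoretic step underlying both lifts: verifying that $\What(R')\to\What(R)\times_{W_k(R)}W_k(R')$ for $k\in\{m,n\}$ is surjective with topologically nilpotent kernel, so that smoothness of $\calG$ genuinely yields the lift in the complete (rather than merely Artinian) setting. I would handle this by filtering the kernel via the Verschiebung and lifting through the resulting pro-nilpotent tower, using completeness of $\What(R')$ to pass to the limit. The case $n=\onerdt$ is formally identical: one works over $\Fpbar$-algebras (so $\What=W$), replaces $\calG$ by its reductive quotient $\calG_{\Fp}^{\rdt}$, which is again smooth affine, and replaces the $W_n$-reduction by the reductive-quotient truncation of \Cref{rmk:mloc-infty}; the same patching-and-lifting argument then applies verbatim.
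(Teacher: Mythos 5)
Your argument for the main case $n \neq \onerdt$ is, up to the level of detail, exactly the paper's proof: the paper also proceeds in the same two stages, first producing a compatible lift of the underlying pairs to a Dieudonné $(\calG, \bmmu)$-pair $(M', M'_1)$ over $R'$ using the smoothness of $\calG$ (leaving implicit the patching over $\What(R) \times_{W_m(R)} W_m(R')$ that you spell out), and then lifting the isomorphism $(\Psi, \overline{\Psi'})$, viewed as defined over the fiber product $\What(R) \times_{W_n(R)} W_n(R')$, along the surjection from $\What(R')$, again by smoothness of $\calG$ together with completeness. So for $n \neq \onerdt$ your proposal is correct and matches the paper.

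Where you genuinely deviate, and where your write-up has a gap, is the case $n = \onerdt$. You claim the patching-and-lifting argument applies ``verbatim'' after replacing $\calG$ by $\calG_{\Fp}^{\rdt}$, but it does not: $\overline{\Psi'}$ is an isomorphism of $\calG_{\Fp}^{\rdt}$-torsors obtained by a \emph{change of group} along $\calG_{\Fp} \to \calG_{\Fp}^{\rdt}$ (and base change to $R'$), not merely by a change of rings, so the pair $(\Psi, \overline{\Psi'})$ is not a point of the single scheme of isomorphisms of $\calG$-torsors $Y$ over any fiber-product ring, and the universal-property argument you use in the second stage breaks down. One needs the additional input that the given section of the pushout $\calG_{\Fp}^{\rdt}$-Isom scheme lifts to a section of $Y$ over $R'$, compatibly with the lift over $R$ determined by $\Psi$ — that is, lifting along the smooth surjection from $Y_{R'}$ to its $\calG_{\Fp}^{\rdt}$-pushout, a torsor under the (smooth, unipotent) kernel of $\calG_{\Fp} \to \calG_{\Fp}^{\rdt}$. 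This is fixable, and it is precisely what the paper's treatment packages away: the paper deduces the $n = \onerdt$ case from the case $n \neq \onerdt$ using the smoothness of the forgetful morphism $\Disp_{\calG, \bmmu}^{(m, 1)} \to \Disp_{\calG, \bmmu}^{(m, \onerdt)}$ from \Cref{lem:restriction-smooth}, rather than rerunning the torsor argument with the reductive quotient. Two further small inaccuracies, neither of which affects the structure of your argument: for a complete Noetherian local $\Fpbar$-algebra $R$ one has $\What(R) \subsetneq W(R)$ in general (the Zink ring is not the full Witt ring), and zero-padding a truncated Witt vector does not in general produce an element of $\What(R)$ — surjectivity of $\What(R) \to W_m(R)$ does hold, but one sees it by using the decomposition of $\What(R)$ into the image of the canonical section $W(\Fpbar) \to \What(R)$ plus $\widehat{W}(\frakm_R)$, padding only the latter component.
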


\begin{proof}
    We prove the statement in the case $n \neq \onerdt$ (that case then follows from \Cref{lem:restriction-smooth}).
    Using the smoothness of $\calG$ we can certainly find a compatible lift of $(M, M_1)$ and $(\overline{M'}, \overline{M'_1})$ to a Dieudonné $(\calG, \bmmu)$-pair $(M', M'_1)$ over $R'$.
    Then $\Psi$ and $\overline{\Psi'}$ together induce an isomorphism
    \[
        \roundbr[\big]{\Psi, \overline{\Psi'}} \colon \roundbr[\big]{\What(R) \times_{W_n(R)} W_n(R')} \otimes_{\What(R')} \widetilde{M'_1} \to \roundbr[\big]{\What(R) \times_{W_n(R)} W_n(R')} \otimes_{\What(R')} M'.
    \]
    Again using the smoothness of $\calG$ we can lift this isomorphism to an isomorphism $\Psi' \colon \widetilde{M'_1} \to M'$ to obtain the desired Dieudonné $(\calG, \bmmu)$-display $(M', M'_1, \Psi')$ over $R'$.
\end{proof}

The following Proposition is a variant of \cite[Lemma 3.1.17 and Lemma 3.2.13]{kisin-pappas} and will be needed later.

\begin{proposition} \label{prop:dieudonne-display-canonical-iso}
    Let $R$ be a complete Noetherian local ring with perfect residue field $k$ that is additionally $p$-torsionfree and satisfies $\frakm_R = \sqrt{pR}$, and write $W \coloneqq W(k)$ and $K_0 \coloneqq W[1/p]$.

    Let $(M_0, \Phi_0)$ be a tuple consisting of a finite free $W$-module (of rank $h$) and a morphism of $W$-modules $\Phi_0 \colon M_0^{\sigma} \to M_0$ such that $p M_0 \subseteq \im(\Phi_0)$ (note that this implies that $\Phi_0$ becomes an isomorphism after inverting $p$).

    Let $(M, \Phi)$ be a tuple consisting of a finite free $\What(R)$-module $M$ and a morphism of $\What(R)$-modules $\Phi \colon M^{\sigma} \to M$ that becomes an isomorphism after inverting $p$, and suppose we are given an identification $(M_0, \Phi_0) = W \otimes_{\What(R)} (M, \Phi)$.

    Then there exists a unique morphism of $\What(R)[1/p]$-modules
    \[
        \What(R)[1/p] \otimes_{K_0} M_0[1/p] \to M[1/p]
    \]
    that lifts the identity on $M_0[1/p]$ and is compatible with $\Phi_0$ and $\Phi$ in the sense that the diagram
    \[
    \begin{tikzcd}
        \What(R)[1/p] \otimes_{K_0} M_0[1/p]^{\sigma} \ar[r] \ar[d, "\Phi_0"]
        & M[1/p]^{\sigma} \ar[d, "\Phi"]
        \\
        \What(R)[1/p] \otimes_{K_0} M_0[1/p] \ar[r]
        & M[1/p]
    \end{tikzcd}
    \]
    is commutative.
    Furthermore, this morphism is in fact an isomorphism.

    Now suppose that $M_0$ and $M$ are additionally equipped with compatible $\calG$-structures such that $\Phi_0$ and $\Phi$ give isomorphisms of $\calG$-torsors after inverting $p$.
    Then also the above isomorphism is compatible with the $\calG$-structures on both sides.
\end{proposition}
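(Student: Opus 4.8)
The plan is to reinterpret the sought map as the unique fixed point of a Frobenius-conjugation operator and to produce it by a convergent iteration, following the method of \cite[Lemma 3.1.17]{kisin-pappas}. Giving a $\What(R)[1/p]$-linear map $\What(R)[1/p]\otimes_{K_0}M_0[1/p]\to M[1/p]$ is the same as giving a $K_0$-linear map $\alpha\colon M_0[1/p]\to M[1/p]$, and the commutativity of the displayed diagram is precisely the equation $\alpha=\Phi\circ\alpha^{\sigma}\circ\Phi_0^{-1}$, where $\Phi_0^{-1}$ makes sense after inverting $p$; write $T(\alpha)\coloneqq\Phi\circ\alpha^{\sigma}\circ\Phi_0^{-1}$. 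Writing $J\coloneqq\ker(\What(R)\to W)$, the condition that $\alpha$ lift the identity on $M_0[1/p]$ says exactly that $\alpha\equiv\id\pmod{J}$. Thus the first three assertions reduce to showing that $T$ has a unique fixed point among the $K_0$-linear lifts of the identity, and that this fixed point is an isomorphism.

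To set up the iteration, note that from $pM_0\subseteq\im(\Phi_0)$ one gets $\Phi_0^{-1}(M_0)\subseteq p^{-1}M_0^{\sigma}$, while $\Phi$ is integral; iterating, $T^{n}(\alpha)=\Phi^{(n)}\circ\alpha^{\sigma^{n}}\circ(\Phi_0^{(n)})^{-1}$, where $\Phi^{(n)}=\Phi\circ\Phi^{\sigma}\circ\cdots\circ\Phi^{\sigma^{n-1}}$ denotes the $n$-fold Frobenius iterate, and the denominators of $(\Phi_0^{(n)})^{-1}$ are bounded by $p^{-n}$. The crucial technical input — and the step I expect to be the main obstacle — is the topological nilpotence of Frobenius on $J$ provided by the hypothesis $\frakm_R=\sqrt{pR}$: for every fixed $c\geq 0$ one has $p^{-cn}\sigma^{n}(J)\to 0$ $p$-adically in $\What(R)$. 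This is the analogue for our situation of the estimate underlying \cite[Lemma 3.1.17]{kisin-pappas} and is where the standing assumptions on $R$ are really used.

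Given this, existence and uniqueness are formal. The operator $T$ preserves the set of lifts of the identity (since $\Phi,\Phi_0$ both reduce to $\Phi_0$ modulo $J$), and for any two lifts the difference of their images under $T^{n}$ lands in $p^{-n}\sigma^{n}(J)\cdot\Hom(M_0,M)[1/p]$; hence for a fixed initial lift $\alpha_0$ the sequence $T^{n}(\alpha_0)$ is Cauchy and converges to a fixed point $\beta$, which still lifts the identity. For uniqueness, the difference $\gamma$ of two solutions is $T$-invariant and lies in $J\cdot\Hom(M_0,M)[1/p]$, so $\gamma=T^{n}(\gamma)\to 0$, i.e.\ $\gamma=0$. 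To see that $\beta$ is an isomorphism I will run the same construction backwards: since $\Phi$ is an isomorphism after inverting $p$, the element $\det\Phi$ is a unit of $\What(R)[1/p]$, so that $(\det\Phi)^{-1}\in\What(R)[1/p]$ and $\Phi^{-1}$ also has bounded denominators, and the operator $\delta\mapsto\Phi_0\circ\delta^{\sigma}\circ\Phi^{-1}$ produces by the same argument a $\Phi$-equivariant lift $\delta\colon M[1/p]\to\What(R)[1/p]\otimes_{K_0}M_0[1/p]$ of the identity. Then $\delta\circ\beta$ and $\beta\circ\delta$ are Frobenius-equivariant endomorphisms lifting the identity, hence equal to the identity by the uniqueness just shown; thus $\beta$ is invertible.

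Finally, for the $\calG$-compatibility I would argue Tannakially using the defining tensors of $\calG\subseteq\GL(\Lambda)$. A compatible $\calG$-structure on $M_0$ and $M$ amounts to tensors $t_{0,\gamma}\in M_0^{\otimes}[1/p]$ and $t_\gamma\in M^{\otimes}[1/p]$ with $t_{0,\gamma}=t_\gamma\bmod J$, and the hypothesis that $\Phi_0,\Phi$ are isomorphisms of $\calG$-torsors after inverting $p$ says exactly that these tensors are $\Phi_0$- respectively $\Phi$-invariant. Now both $t_\gamma$ and $\beta^{\otimes}(t_{0,\gamma})$ are $\Phi$-invariant tensors in $M^{\otimes}[1/p]$ reducing to $t_{0,\gamma}$ modulo $J$ (the former by assumption, the latter because $\beta$ intertwines $\Phi_0$ and $\Phi$ and reduces to the identity). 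Their difference is a $\Phi^{\otimes}$-invariant element of $J\cdot M^{\otimes}[1/p]$, and the same $T^{n}\to 0$ argument — now on the tensor space, where the iterated Frobenius has denominators growing only like $p^{cn}$ for a constant $c$ depending on the tensor degree, which is absorbed by the estimate $p^{-cn}\sigma^{n}(J)\to 0$ — forces it to vanish. Hence $\beta^{\otimes}(t_{0,\gamma})=t_\gamma$ for all $\gamma$, so $\beta$ is an isomorphism of $\calG$-torsors, as desired.
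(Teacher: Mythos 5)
Your reduction to a fixed-point problem and your existence--uniqueness argument are correct and coincide with the paper's own proof: the paper's series $T=1+\sum_{m}T_m$ has partial sums equal to your iterates $T^{n}(\alpha_0)$ with $\alpha_0=\mathrm{id}$, and that part only ever needs the case $c=1$ of your estimate, because $p\Phi_0^{-1}$ is integral, so the denominators of $(\Phi_0^{(n)})^{-1}$ grow exactly like $p^{n}$. The genuine gap is the strengthened estimate itself: the claim that $p^{-cn}\sigma^{n}(J)\to 0$ for \emph{every} fixed $c\geq 0$ is false for $c>1$. An element $u\in J=\What(\frakm_R)$ has Witt coordinates tending to $0$ at an arbitrarily slow rate, and Frobenius gains only the linear factor $p^{n}$ on the most recent coordinates: writing $u=\sum_{j}V^{j}[u_j]$ and using $\sigma^n V^j=p^{\min(n,j)}\sigma^{\max(n-j,0)}$ one gets
\[
\sigma^{n}(u)=\sum_{j\leq n}p^{j}\,\bigl[u_j^{\,p^{n-j}}\bigr]+p^{n}\sum_{j>n}V^{j-n}[u_j],
\]
and the summand $p^{n}[u_n]$ has $p$-adic valuation only about $n$ plus the valuation of $u_n$. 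Concretely, over $R=\Zp$ the element $u=\sum_{i}V^{4^{i}}\bigl[p^{2^{i}}\bigr]$ lies in $J$, and along $n=4^{i}$ the valuation of $\sigma^{n}(u)$ equals $n+\sqrt{n}+O(1)$; hence $p^{-cn}\sigma^{n}(u)$ has unbounded denominators for every $c>1$.

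This breaks exactly the two steps where you invoke $c>1$. For invertibility, your reverse operator $\delta\mapsto\Phi_0\circ\delta^{\sigma}\circ\Phi^{-1}$ accumulates denominators $p^{cn}$, where $p^{c}$ bounds the denominators of $\Phi^{-1}$; the hypotheses are asymmetric here, since only $p\Phi_0^{-1}$ is assumed integral, and $c>1$ genuinely occurs (e.g.\ over $R=\Zp$, $\Phi=p+V[p]$ lifts $\Phi_0=p$ and is invertible after inverting $p$, but $p\Phi^{-1}$ is not integral), so the reverse iteration need not converge. For the $\calG$-compatibility, the same defect is worse: your tensor argument iterates conjugation by $\Phi^{\otimes}$, whose inverse contributes denominators $p^{c\ell n}$. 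The paper gets both statements using only the $c=1$ estimate. For invertibility: $1-T$ is a \emph{fixed} matrix over $J[1/p]$, so $\sigma^{m}(1-T)\to 0$ and hence $\sigma^{m}(T)\in 1+\M_h\bigl(\What(\frakm_R)\bigr)\subseteq\GL_h\bigl(\What(R)\bigr)$ for $m\gg 0$; the iterated fixed-point equation $T=A\dotsb\sigma^{m-1}(A)\,\sigma^{m}(T)\,\sigma^{m-1}(A_0^{-1})\dotsb A_0^{-1}$ then exhibits $T$ as a product of matrices invertible over $\What(R)[1/p]$. For the $\calG$-compatibility: one first chooses an isomorphism $\What(R)\otimes_W M_0\to M$ of $\calG$-torsors lifting the identity and computes in the resulting basis; then each factor $A$, $A'$, $A_0^{-1}$, and hence every term $T_m$ of the series, intertwines the defining tensors of $\calG$, so the limit $T$ does as well --- no estimate beyond the convergence already established is needed. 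You should replace your last two steps by arguments of this kind (or prove a decay estimate with the correct, $c=1$, normalization and restructure so that only it is used).
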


\begin{proof}
    Choose a $\What(R)$-basis of $M$.
    Then the morphism $\Phi$ is given by a matrix
    \[
        A \in \GL_h(\What(R)[1/p]) \cap \M_h(\What(R))
    \]
    that we write as $A = A_0 + A'$ with $A_0 \in \GL_h(K_0) \cap \M_h(W)$ and $A' \in \M_h(\What(\frakm_R))$ (note that our assumptions imply that $p A_0^{-1} \in \M_h(W)$).
    Our task is now to show that there exists a unique matrix
    \[
        T \in \roundbr[\Big]{1 + \M_h \roundbr[\big]{\What(\frakm_R)[1/p]}} \quad \text{such that} \quad T = A \sigma(T) A_0^{-1}
    \]
    and that this matrix $T$ is furthermore contained in $\GL_h(\What(R)[1/p])$.
    \begin{itemize}
        \item
        For the uniqueness statement, suppose we are given $T$ and $T'$ as above.
        Then the difference
        \[
            U = T - T' \in \M_h(\What(\frakm_R)[1/p])
        \]
        again satisfies the equation $U = A \sigma(U) A_0^{-1}$ and thus
        \[
            U = A \dotsb \sigma^{m - 1}(A) \roundbr[\big]{p^{-m} \sigma^m(U)} \sigma^{m - 1}(p A_0^{-1}) \dotsb p A_0^{-1}
        \]
        for all $m \in \ZZ_{\geq 0}$.
        Now all the factors except for the middle one are valued in $\What(R)$ while the middle factor converges to $0$ for $m \to \infty$.
        Thus we have $U = 0$ as desired.

        \item
        For the existence statement, we set
        \[
            T_m \coloneqq A \dotsb \sigma^{m - 1}(A) \sigma^m(A' A_0^{-1}) \sigma^{m - 1}(A_0^{-1}) \dotsb A_0^{-1} \in \M_h \roundbr[\big]{\What(\frakm_R)[1/p]}.
        \]
        for $m \in \ZZ_{\geq 0}$ so that $T_m \to 0$ for $m \to \infty$ similarly to before.
        Note that we have the identities
        \[
            A A_0^{-1} = 1 + T_0 \quad \text{and} \quad A \sigma(T_m) A_0^{-1} = T_{m + 1}.
        \]
        Thus the matrix
        \[
            T \coloneqq 1 + \sum_{m = 0}^{\infty} T_m \in \roundbr[\Big]{1 + \M_h \roundbr[\big]{\What(\frakm_R)[1/p]}}
        \]
        satisfies the equation $T = A \sigma(T) A_0^{-1}$.

        \item
        It remains for us to show that we have $T \in \GL_h(\What(R)[1/p])$.
        As $1 - T \in \M_h(\What(\frakm_R)[1/p])$ we in fact have
        \[
            1 - \sigma^m(T) = \sigma^m(1 - T) \in \M_h \roundbr[\big]{\What(\frakm_R)}
        \]
        for $m \gg 0$ so that
        \[
            \sigma^m(T) \in \roundbr[\Big]{1 + \M_h \roundbr[\big]{\What(\frakm)}} \subseteq \GL_h \roundbr[\big]{\What(R)} \subseteq \GL_h \roundbr[\big]{\What(R)[1/p]}.
        \]
        But now we have
        \[
            T = A \dotsb \sigma^{m - 1}(A) \sigma^m(T) \sigma^{m - 1}(A_0^{-1}) \dotsb A_0^{-1}
        \]
        so that $T \in \GL_h(\What(R)[1/p])$ as well.
    \end{itemize}

    Now suppose that $M_0$ and $M$ are equipped with $\calG$-structures as described in the statement of the proposition.
    Fix a family of tensors $(s_{\alpha})_{\alpha}$ of tensors $s_{\alpha} \colon \Lambda^{\otimes \ell_{\alpha}} \to \Lambda^{\otimes \ell_{\alpha}}$ that cut out $\calG \subseteq \GL(\Lambda)$.
    Then the $\calG$-structures on $M_0$ and $M$ are given by families of tensors $(m_{0, \alpha})_{\alpha}$ and $(m_{\alpha})_{\alpha}$.
    There now certainly exists an isomorphism of $\What(R)$-modules $\What(R) \otimes_W M_0 \to M$ that lifts the identity of $M_0$ and is compatible with $\calG$-structures.
    Choose a $W$-basis of $M_0$ and use this isomorphism to obtain a $\What(R)$-basis of $M$.
    When we use this basis in the construction above, then every term in the series expansion
    \[
        T = 1 + \sum_{m = 0}^{\infty} T_m \colon \What(R)[1/p] \otimes_{K_0} M_0[1/p] \to M[1/p]
    \]
    is compatible with the tensors in the sense that the diagram
    \[
    \begin{tikzcd}
        \What(R)[1/p] \otimes_{K_0} M_0[1/p] \ar[r] \ar[d, "m_{0, \alpha}"]
        & M[1/p] \ar[d, "m_{\alpha}"]
        \\
        \What(R)[1/p] \otimes_{K_0} M_0[1/p] \ar[r]
        & M[1/p]
    \end{tikzcd}
    \]
    is commutative for all $\alpha$.
\end{proof}

\begin{corollary} \label{cor:g-structure-unique}
    Let $R$ be a complete Noetherian local $\calOE$-algebra with perfect residue field $k$ that is additionally $p$-torsionfree and satisfies $\frakm_R = \sqrt{pR}$.
    Let $(M_0, M_{0, 1}, \Psi_0)$ be a $(\calG, \bmmu)$-display over $k$ and let $(M, M_1, \Psi)$ be a deformation to a Dieudonné display over $R$.

    Then there exists at most one $\calG$-structure on $M$ that is compatible with the given $\calG$-structure on $M_0$ and makes $(M, M_1, \Psi)$ into a Dieudonné $(\calG, \bmmu)$-display over $R$.
\end{corollary}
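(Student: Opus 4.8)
The plan is to show that any two $\calG$-structures on $M$ satisfying the hypotheses already agree after inverting $p$, and then to upgrade this to an equality of integral $\calG$-structures using that $\What(R)$ is $p$-torsionfree. Fix a family of tensors $(s_\alpha)_\alpha$ cutting out $\calG \subseteq \GL(\Lambda)$, as in the proof of \Cref{prop:dieudonne-display-canonical-iso}. A $\calG$-structure on the finite free $\What(R)$-module $M$ is then recorded by the induced tensors on $M$, i.e.\ by sections of the relevant tensor bundles built from $M$. Since $R$, and hence $\What(R)$ and every such tensor bundle, is $p$-torsionfree, these sections are determined by their images after inverting $p$. Thus a $\calG$-structure on $M$ is determined by the $\calG$-structure it induces on $M[1/p]$, and it suffices to prove that the latter is forced by the fixed $\calG$-structure on $M_0$.

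First I would record that the Frobenius $\Phi \colon M^\sigma \to M$ of the Dieudonné $(\calG, \bmmu)$-display (see \Cref{def:display-frobenius}) is, after inverting $p$, an isomorphism of $\calG$-torsors. Indeed $\Phi$ factors as $M^\sigma \to \widetilde{M_1} \xrightarrow{\Psi} M$; the map $\Psi$ is by hypothesis an isomorphism of $\calG$-torsors over $\What(R)$, while the first map becomes, after inverting $p$, the canonical isomorphism of \Cref{prop:g-mu-m-1-tilde} up to the central scalar $p$. Both are compatible with $\calG$-structures — the scalar $p$ because $\bfG_{m, \Zp} \subseteq \iota(\calG)$ by \Cref{not:g-mu-displays-group-setup} — so $\Phi[1/p]$ is an isomorphism of $\calG$-torsors. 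The same reasoning over the perfect field $k$ shows that the Frobenius $\Phi_0$ of $(M_0, M_{0, 1}, \Psi_0)$ is a $\calG$-torsor isomorphism after inverting $p$, and since $\Phi_0$ is the Frobenius of a display over a perfect field it satisfies $p M_0 \subseteq \im(\Phi_0)$; hence $(M_0, \Phi_0)$ and $(M, \Phi)$ meet the hypotheses of \Cref{prop:dieudonne-display-canonical-iso}.

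With this in place I would invoke the last assertion of \Cref{prop:dieudonne-display-canonical-iso}. The canonical isomorphism
\[
    T \colon \What(R)[1/p] \otimes_{K_0} M_0[1/p] \to M[1/p]
\]
depends only on the Frobenius data $(M_0, \Phi_0)$ and $(M, \Phi)$ and \emph{not} on any choice of $\calG$-structure on $M$. Given a $\calG$-structure making $(M, M_1, \Psi)$ a Dieudonné $(\calG, \bmmu)$-display (and compatible with the fixed one on $M_0$), the previous paragraph verifies the hypotheses of that assertion, so $T$ is an isomorphism of $\calG$-torsors carrying the base change of the fixed $\calG$-structure on $M_0[1/p]$ to the induced $\calG$-structure on $M[1/p]$. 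As both $T$ and the $\calG$-structure on $M_0$ are independent of the chosen $\calG$-structure on $M$, the resulting $\calG$-structure on $M[1/p]$ is uniquely determined; combined with the first paragraph this yields uniqueness of the $\calG$-structure on $M$.

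The hard part is really the content of \Cref{prop:dieudonne-display-canonical-iso}, which we are free to use; granting it, the two genuinely new points are the identification of $\Phi[1/p]$ as a $\calG$-torsor isomorphism and the fact that a $\calG$-structure on the $p$-torsionfree module $M$ is pinned down by its generic fibre. The one place to take care is that the tensor formalism encoding $\calG$-structures is compatible both with passage to $M[1/p]$ and with the identification furnished by $T$, but this is a formal consequence of the functoriality of the tensor construction.
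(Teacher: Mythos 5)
Your proposal is correct and follows exactly the route the paper intends: the paper's proof of \Cref{cor:g-structure-unique} is simply ``immediate from \Cref{prop:dieudonne-display-canonical-iso}'', and your argument is precisely the intended unwinding — the canonical isomorphism $T$ depends only on the Frobenius data, so by the final assertion of that proposition any admissible $\calG$-structure on $M[1/p]$ is the transport along $T$ of the fixed one on $M_0[1/p]$, and $p$-torsionfreeness of $\What(R)$ (via the tensors cutting out $\calG$) pins down the integral $\calG$-structure. Your two supporting verifications (that $\Phi[1/p]$ is a $\calG$-torsor isomorphism, using $\bfG_{m,\Zp} \subseteq \iota(\calG)$ for the scalar $p$, and that $pM_0 \subseteq \im(\Phi_0)$) are exactly the details the paper leaves implicit.
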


\begin{proof}
    This is immediate from \Cref{prop:dieudonne-display-canonical-iso}.
\end{proof}

\subsection{\texorpdfstring{Grothendieck-Messing Theory for Dieudonné $(\calG, \bmmu)$-displays}{Grothendieck-Messing Theory for Dieudonné (G, mu)-displays}}

\begin{notation} \label{not:g-mu-grothendieck-messing}
    In this subsection $S \to R$ always denotes a surjection of complete Noetherian local $\calOE$-algebras with perfect residue field such that $S$ is $p$-torsionfree and satisfies $\frakm_S = \sqrt{pS}$ and such that the kernel $\fraka$ of $S \to R$ has nilpotent divided powers (that are automatically continuous).
\end{notation}

\begin{definition} \label{def:dieudonne-g-mu-pair-grothendieck-messing}
    A \emph{Dieudonné $(\calG, \bmmu)$-pair for $S/R$} is a tuple $(M, M_1)$ consisting of a finite free $\What(S)$-module $M$ that is equipped with a $\calG$-structure and a $\What(R)$-submodule $M_1 \subseteq \What(R) \otimes_{\What(S)} M$ such that $(\What(R) \otimes_{\What(S)} M, M_1)$ is a Dieudonné $(\calG, \bmmu)$-pair over $R$.

    A Dieudonné $(\calG, \bmmu)$-pair for $S/R$ (or over $R$) is called \emph{liftable (to $S$)} if it admits a lift to a Dieudonné $(\calG, \bmmu)$-pair over $S$.
    A Dieudonné $(\calG, \bmmu)$-display over $R$ is called \emph{liftable (to $S$)} if its underlying $(\calG, \bmmu)$-pair has this property.
\end{definition}

\begin{lemma} \label{lem:g-mu-m-1-grothendieck-messing}
    There are unique functors
    \[
    \begin{array}{r c l}
        \curlybr[\big]{\text{liftable Dieudonné $(\calG, \bmmu)$-pairs for $S/R$}}
        & \to
        & \curlybr[\big]{\text{$\calG$-torsors over $\What(S)$}}, \\[1mm]
        (M, M_1)
        & \mapsto
        & \widetilde{M_1}
    \end{array}
    \]
    together with natural isomorphisms $\widetilde{M_1}[1/p] \to M^{\sigma}[1/p]$ that are compatible with passing to Dieudonné pairs for $S/R$, see \Cref{lem:m-1-grothendieck-messing}.
    This construction is compatible with base change in $S/R$.
\end{lemma}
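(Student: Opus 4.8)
The plan is to deduce the statement from the non-equivariant Grothendieck-Messing functor of \Cref{lem:m-1-grothendieck-messing} together with the $\calG$-equivariant construction of \Cref{prop:g-mu-m-1-tilde}, using the liftability hypothesis to import a $\calG$-structure and the $p$-torsionfreeness of $S$ to show that it is canonical. The crucial observation is that, since $S$ is $p$-torsionfree by \Cref{not:g-mu-grothendieck-messing}, the ring $\What(S)$ is $p$-torsionfree as well, so that $\What(S) \to \What(S)[1/p]$ is injective; consequently any reduction of the $\GL(\Lambda)$-torsor underlying $\widetilde{M_1}$ to a $\calG$-torsor is determined by its generic fibre over $\What(S)[1/p]$.

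First I would attach to a liftable Dieudonné $(\calG, \bmmu)$-pair $(M, M_1)$ for $S/R$ the finite free $\What(S)$-module $\widetilde{M_1}$ together with the isomorphism $\widetilde{M_1}[1/p] \to M^{\sigma}[1/p]$, simply by applying \Cref{lem:m-1-grothendieck-messing} to the underlying Dieudonné pair for $S/R$. To endow $\widetilde{M_1}$ with a $\calG$-structure I would choose, using liftability, a lift of $(M, M_1)$ to a Dieudonné $(\calG, \bmmu)$-pair $(M, \widehat{M}_1)$ over $S$. Applying the Dieudonné version of \Cref{prop:g-mu-m-1-tilde} to this genuine pair over $S$ produces a $\calG$-torsor $\widetilde{\widehat{M}_1}$ over $\What(S)$, whose underlying module is canonically identified with $\widetilde{M_1}$ by the factorization recorded in \Cref{lem:m-1-grothendieck-messing}. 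This transports the $\calG$-structure onto $\widetilde{M_1}$.

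The heart of the matter is to check that this $\calG$-structure does not depend on the chosen lift $\widehat{M}_1$. Here I would invoke \Cref{rmk:g-mu-m-1-tilde-pel}: over the $p$-torsionfree base $\What(S)$ the $\calG$-structure produced by \Cref{prop:g-mu-m-1-tilde} is precisely the extension of the $\calG$-structure on $\widetilde{M_1}[1/p] \cong M^{\sigma}[1/p]$ transported from $M$. Since this generic $\calG$-structure is visibly a function of $M$ and of the isomorphism $\widetilde{M_1}[1/p] \to M^{\sigma}[1/p]$ alone, and not of $\widehat{M}_1$, two lifts yield the same integral $\calG$-structure. The same characterization immediately gives the remaining claims: uniqueness of the functor holds because any admissible $\calG$-structure must extend the forced generic one; functoriality holds because an isomorphism of $S/R$-pairs induces a map of modules $\widetilde{M_1}$ that is $\calG$-equivariant after inverting $p$, hence $\calG$-equivariant integrally; compatibility with passing to Dieudonné pairs for $S/R$ is built into the construction; and compatibility with base change follows since the admissible base changes remain within the $p$-torsionfree setting of \Cref{not:g-mu-grothendieck-messing}, so that the generic characterization is preserved.

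The step I expect to demand the most attention is the identification underlying \Cref{rmk:g-mu-m-1-tilde-pel} in the Dieudonné setting, namely that the indirect construction of the $\calG$-structure in \Cref{lem:g-mu-m-1-tilde-key-lemma}, via Breuil-Kisin modules, Beauville-Laszlo gluing and Anschütz's purity, really coincides, over a $p$-torsionfree base, with the naive integral extension of the generic $\calG$-structure on $M^{\sigma}[1/p]$. Granting this, every remaining assertion is a formal consequence of the injectivity of $\What(S) \to \What(S)[1/p]$.
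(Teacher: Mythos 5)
Your proposal is correct and is essentially the paper's own argument: uniqueness of the $\calG$-structure on $\widetilde{M_1}$ follows from the $p$-torsionfreeness of $\What(S)$ (so a reduction to $\calG$ is determined by its generic fibre), and existence follows from liftability together with the Dieudonné case of \Cref{prop:g-mu-m-1-tilde}. The step you flag as delicate carries no extra burden: the $\calG$-structure produced by \Cref{prop:g-mu-m-1-tilde} via \Cref{lem:g-mu-m-1-tilde-key-lemma} is, by that lemma's uniqueness clause, exactly the one compatible with the isomorphism $\widetilde{M_1}[1/p] \to M^{\sigma}[1/p]$, which is what \Cref{rmk:g-mu-m-1-tilde-pel} records, so independence of the chosen lift is immediate.
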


\begin{proof}
    As $\What(S)$ is $p$-torsionfree, given a liftable Dieudonné $(\calG, \bmmu)$-pair $(M, M_1)$ for $S/R$ there exists at most one $\calG$-structure on the finite free $\What(S)$-module $\widetilde{M_1}$ from \Cref{lem:m-1-grothendieck-messing} that is compatible with the $\calG$-structure on $M^{\sigma}$ under the isomorphism $\widetilde{M_1}[1/p] \to M^{\sigma}[1/p]$.
    The existence of such a $\calG$-structure follows immediately from the liftablility of $(M, M_1)$ to $S$ and \Cref{prop:g-mu-m-1-tilde}.
\end{proof}

\begin{definition}
    A \emph{liftable Dieudonné $(\calG, \bmmu)$-display for $S/R$} is a tuple $(M, M_1, \Psi)$ where $(M, M_1)$ is a liftable Dieudonné $(\calG, \bmmu)$-pair for $S/R$ and $\Psi \colon \widetilde{M_1} \to M$ is an isomorphism of $\calG$-torsors over $\What(S)$.
\end{definition}

\begin{theorem} \label{thm:g-mu-grothendieck-messing}
    The natural forgetful functor
    \[
        \curlybr[\Bigg]{\begin{gathered} \text{liftable Dieudonné $(\calG, \bmmu)$-displays} \\ \text{for $S/R$} \end{gathered}} \to \curlybr[\Bigg]{\begin{gathered} \text{liftable Dieudonné $(\calG, \bmmu)$-displays} \\ \text{over $R$} \end{gathered}}
    \]
    is an equivalence.
\end{theorem}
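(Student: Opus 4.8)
The plan is to deduce the statement from Zink's non-group-theoretic Grothendieck--Messing equivalence (\Cref{thm:grothendieck-messing}) by promoting the $\calG$-structures, using throughout that forgetting the $\calG$-structure is compatible with the forgetful functors on both sides and with the formation of $\widetilde{M_1}$ (see \Cref{prop:g-mu-m-1-tilde} and \Cref{lem:g-mu-m-1-grothendieck-messing}). Faithfulness is then immediate: a morphism of liftable Dieudonné $(\calG, \bmmu)$-displays for $S/R$ is in particular a morphism of the underlying Dieudonné displays for $S/R$, so two such morphisms inducing the same morphism over $R$ already agree by the faithfulness half of \Cref{thm:grothendieck-messing}.

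For essential surjectivity I would argue directly. Let $(\overline{M}, \overline{M}_1, \overline{\Psi})$ be a liftable Dieudonné $(\calG, \bmmu)$-display over $R$. By liftability its underlying pair lifts to a Dieudonné $(\calG, \bmmu)$-pair over $S$; restricting this lift along $S \to R$ yields a liftable Dieudonné $(\calG, \bmmu)$-pair $(M, M_1)$ for $S/R$ lying over $(\overline{M}, \overline{M}_1)$, and \Cref{lem:g-mu-m-1-grothendieck-messing} equips both $M$ and $\widetilde{M_1}$ with $\calG$-structures over $\What(S)$ that are compatible with base change to $\What(R)$. It then remains to lift the isomorphism $\overline{\Psi} \colon \widetilde{\overline{M}_1} \to \overline{M}$ of $\calG$-torsors over $\What(R)$ to an isomorphism of $\calG$-torsors over $\What(S)$. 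As $\calG$ is smooth and $\What(S) \to \What(R)$ is a surjection with topologically nilpotent kernel, such a lift $\Psi$ exists by the same smoothness argument already used in the proof of \Cref{lem:restriction-smooth-dieudonne}. The triple $(M, M_1, \Psi)$ is then a liftable Dieudonné $(\calG, \bmmu)$-display for $S/R$ whose image over $R$ is $(\overline{M}, \overline{M}_1, \overline{\Psi})$.

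The remaining and genuinely harder point is fullness, which I expect to be the main obstacle. Given two liftable Dieudonné $(\calG, \bmmu)$-displays $(M, M_1, \Psi)$ and $(M', M'_1, \Psi')$ for $S/R$ together with an isomorphism between their images over $R$, \Cref{thm:grothendieck-messing} produces a unique isomorphism $f \colon (M, M_1, \Psi) \to (M', M'_1, \Psi')$ of the underlying Dieudonné displays for $S/R$ lifting it; the problem is that $f$ is a priori only $\What(S)$-linear and need not respect the $\calG$-structures. To remedy this I would transport the $\calG$-structure on $M'$ along $f$ to a second $\calG$-structure on $M$. Since $f$ intertwines the Frobenii and its reduction to the residue field is $\calG$-equivariant, this second $\calG$-structure is still compatible with the Frobenius of $M$ after inverting $p$ and agrees with the original one on the residue field. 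Because $S$ is $p$-torsionfree with $\frakm_S = \sqrt{pS}$, the uniqueness of such a $\calG$-structure --- this is the content of \Cref{cor:g-structure-unique}, resting on the rigidity of \Cref{prop:dieudonne-display-canonical-iso} and depending only on the underlying module-with-Frobenius, so that it applies with $S$ in place of $R$ --- forces the two $\calG$-structures to coincide, whence $f$ is $\calG$-equivariant. It is precisely here that the $p$-torsionfreeness of $S$ and the canonical-isomorphism rigidity of \Cref{prop:dieudonne-display-canonical-iso} are indispensable; both fail over $R$ itself, which is why the whole argument must be run over $S$.
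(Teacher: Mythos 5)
Your proposal is correct and takes essentially the same route as the paper: the paper also treats essential surjectivity as clear and proves full faithfulness by lifting the isomorphism via Zink's \Cref{thm:grothendieck-messing} and then concluding $\calG$-equivariance of the lift from the rigidity of \Cref{prop:dieudonne-display-canonical-iso}. Your transport-of-structure argument via the uniqueness statement of \Cref{cor:g-structure-unique} (applied over $S$, which is legitimate since that rigidity depends only on the module with Frobenius) is just a spelled-out version of that same appeal.
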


\begin{proof}
    The essential surjectivity of the functor is clear, so that it remains to check fully faithfulness.
    So suppose we are given two liftable Dieudonné $(\calG, \bmmu)$-displays $(M, M_1, \Psi)$ and $(M', M'_1, \Psi')$ for $S/R$ and an isomorphism between the induced Dieudonné $(\calG, \bmmu)$-displays over $R$.
    By \Cref{thm:grothendieck-messing} this isomorphism lifts uniquely to an isomorphism of Dieudonné displays $(M, M_1, \Psi) \to (M', M'_1, \Psi')$ for $S/R$ that automatically is compatible with $\calG$-structures by \Cref{prop:dieudonne-display-canonical-iso}.
\end{proof}

\begin{remark} \label{rmk:g-mu-grothendieck-messing}
    We expect that there exist natural functors
    \[
    \begin{array}{r c l}
        \curlybr[\big]{\text{liftable Dieudonné $(\calG, \bmmu)$-pairs for $S/R$}}
        & \to
        & \curlybr[\big]{\text{$\calG$-torsors over $\What(S)$}}, \\[1mm]
        (M, M_1)
        & \mapsto
        & \widetilde{M_1}
    \end{array}
    \]
    as in \Cref{lem:g-mu-m-1-grothendieck-messing}, but without the assumption that $S$ is $p$-torsionfree and $\frakm_S = \sqrt{pS}$ and that \Cref{thm:g-mu-grothendieck-messing} still holds true in that setting.
    \begin{itemize}
        \item
        The main obstruction to carrying this out seems to be the following.
        Given a Dieudonné $(\calG, \bmmu)$-pair $(M, M_1)$ for $S/R$ and two lifts $(M, M'_1)$ and $(M, M''_1)$ to Dieudonné $(\calG, \bmmu)$-pairs over $S$ we don't know if the natural isomorphism of finite free $\What(S)$-modules
        \[
            \widetilde{M'_1} \to \widetilde{M''_1}
        \]
        given by \Cref{lem:m-1-grothendieck-messing} is compatible with $\calG$-structures.

        \item
        Suppose that $\calG \subseteq \GL(\Lambda)$ is cut out by endomorphisms of $\Lambda$ and a homogeneous polarization.
        Then \Cref{lem:g-mu-m-1-grothendieck-messing} and \Cref{thm:g-mu-grothendieck-messing} are indeed true without the assumptions on $S$; the $\calG$-structure on $\widetilde{M_1}$ has a description similar to the one in \Cref{rmk:g-mu-m-1-tilde-pel}.
    \end{itemize}
\end{remark}

\subsection{\texorpdfstring{The universal deformation of a $(\calG, \bmmu)$-display}{The universal deformation of a (G, mu)-display}} \hfill\\ \label{subsec:universal-deformation-g-mu}
The goal of this subsection is to give a construction of a universal deformation of a $(\calG, \bmmu)$-display over a perfect field $k$ to a Dieudonné $(\calG, \bmmu)$-display, see \Cref{thm:universal-deformation-g-mu-display}, similarly to what was done in Subsection \ref{sec:universal-deformation-display}.
Some of our arguments are inspired by \cite[Subsection 3.2]{kisin-pappas}.

\begin{notation}
    Let $F/E$ be a complete discretely valued field with perfect residue field $k$ and write $W \coloneqq W(k)$.
    Let $(M_0, M_{0, 1}, \Psi_0)$ be a $(\calG, \bmmu)$-display over $k$.
    For simplicity we assume that there exists a trivialization $M_0 \to \Lambda_W$ (this is always possible after replacing $F$ by a finite unramified extension).
    
    We consider the deformation problem $\Def_{\calG}$ over $\calO_F$ given by
    \[
        \Def_{\calG} \colon R \mapsto \curlybr[\Bigg]{\begin{gathered} \text{deformations $(M, M_1, \Psi)$ of $(M_0, M_{0, 1}, \Psi_0)$ over $R$} \\ \text{as a Dieudonné $(\calG, \bmmu)$-display} \end{gathered}}.
    \]
    We also write $\Def_{\GL(\Lambda)}$ for the deformation problem that was called $\Def$ in Subsection \ref{sec:universal-deformation-display} and is given by
    \[
        \Def_{\GL(\Lambda)} \colon R \mapsto \curlybr[\Bigg]{\begin{gathered} \text{deformations $(M, M_1, \Psi)$ of $(M_0, M_{0, 1}, \Psi_0)$ over $R$} \\ \text{as a Dieudonné display} \end{gathered}}.
    \]
    Note that we have a natural morphism $\Def_{\calG} \to \Def_{\GL(\Lambda)}$.

    In this subsection $R$ and $R'$ always denote complete local Noetherian $\calO_F$-algebras with residue field $k$ and $K$ always denotes a finite extension of $F$ with uniformizer $\varpi_K$ and residue field $k_K$.
\end{notation}

\begin{construction} \label{con:universal-deformation-g-mu}
    Write
    \[
        R_{\GL(\Lambda)}, \qquad
        \roundbr[\big]{M^{\GL(\Lambda)}, M^{\GL(\Lambda)}_1} \quad \text{and} \quad
        \fraka_{\GL(\Lambda)}
    \]
    for the objects that were called $R^{\univ}$, $(M^{\univ}, M^{\univ}_1)$ and $\fraka$ respectively in Construction \ref{con:universal-deformation} and equip $\frakm_{R_{\GL(\Lambda)}}/\fraka_{\GL(\Lambda)} \subseteq R_{\GL(\Lambda)}/\fraka_{\GL(\Lambda)}$ with the trivial divided powers.
    Equip $M^{\GL(\Lambda)}$ with its natural $\calG$-structure (coming from the defining isomorphism $M^{\GL(\Lambda)} = \Lambda_{R_{\GL(\Lambda)}}$).

    Write $R_{\calG}$ for the completed local ring of $\rmM^{\loc}_{\calO_F}$ at the $k$-point corresponding to $M_{0, 1}/pM_0 \subseteq M_{0, k}$ so that $R_{\calG}$ is a quotient of $R_{\GL(\Lambda)}$.
    Write
    \[
        \roundbr[\big]{M^{\calG}, M^{\calG}_1} \coloneqq \roundbr[\big]{M^{\GL(\Lambda)}, M^{\GL(\Lambda)}_1}_{R_{\calG}}
    \]
    for the base change of the Dieudonné pair $(M^{\GL(\Lambda)}, M^{\GL(\Lambda)}_1)$ to $R_{\calG}$ and note that it is a Dieudonné $(\calG, \bmmu)$-pair that is a deformation of $(M_0, M_{0, 1})$.
    Finally write
    \[
        \fraka_{\calG} \coloneqq \frakm_{R_{\calG}}^2 + \frakm_{\Ebrev} R_{\calG}
    \]
    (or equivalently $\fraka_{\calG} \coloneqq \fraka_{\GL(\Lambda)} R_{\calG}$) and equip $\frakm_{R_{\calG}}/\fraka_{\calG} \subseteq R_{\calG}/\fraka_{\calG}$ with the trivial divided powers as well.
\end{construction}

We will now make the following assumption.

\begin{assumption} \label{hyp:grothendieck-messing}
    The natural isomorphism of finite free $\What(R_{\calG}/\fraka_{\calG})$-modules
    \[
        \What(R_{\calG}/\fraka_{\calG}) \otimes_{\What(R_{\calG})} \widetilde{M^{\calG}_1} \to \roundbr[\big]{\widetilde{M_{0, 1}}}_{\What(R_{\calG}/\fraka_{\calG})}
    \]
    given by \Cref{lem:m-1-grothendieck-messing} is compatible with $\calG$-structures.
\end{assumption}

\begin{remark}
    Assumption \ref{hyp:grothendieck-messing} is implicitly used in \cite[Subsection 3.2]{kisin-pappas}.
    We expect that it is always satisfied.

    \begin{itemize}
        \item
        Note that the assumption is a special case of the expectation formulated in \Cref{rmk:g-mu-grothendieck-messing}.
        In particular it certainly holds true when $\calG \subseteq \GL(\Lambda)$ is cut out by endomorphisms and a homogeneous polarization.

        \item
        The assumption would also follow if every $k[\varepsilon]$-point of $\Spf(R_{\calG})$ could be lifted to an $\calO_K[\varepsilon]$-point for some $K$, see \Cref{lem:g-mu-m-1-grothendieck-messing}.
        In fact it would even be sufficient to only lift a generating set of tangent vectors.

        \item
        There is work in progress by Pappas and Zhou that actually proves the assumption in general by studying the geometry of the special fiber of $\rmM^{\loc}$.
    \end{itemize}
\end{remark}

\begin{construction}[continues=con:universal-deformation-g-mu] \label{con:universal-deformation-g-mu-2}
    The composition
    \[
        \What(R_{\calG}/\fraka_{\calG}) \otimes_{\What(R_{\calG})} \widetilde{M^{\calG}_1} \to \roundbr[\big]{\widetilde{M_{0, 1}}}_{\What(R_{\calG}/\fraka_{\calG})} \xrightarrow{\Psi_0} M_{0, \What(R_{\calG}/\fraka_{\calG})} \to \What(R_{\calG}/\fraka_{\calG}) \otimes_{\What(R_{\calG})} M^{\calG}
    \]
    now is an isomorphism of $\calG$-torsors over $\What(R_{\calG}/\fraka_{\calG})$.
    We choose a lift of this isomorphism to an isomorphism of $\calG$-torsors over $\What(R_{\calG})$
    \[
        \Psi^{\calG} \colon \widetilde{M^{\calG}_1} \to M^{\calG}
    \]
    so that we obtain a deformation
    \[
        \roundbr[\big]{M^{\calG}, M^{\calG}_1, \Psi^{\calG}} \in \Def_{\calG}(R_{\calG}).
    \]
    Now $\Psi^{\calG}$ and the composition
    \begin{align*}
        \What(R_{\GL(\Lambda)}/\fraka_{\GL(\Lambda)}) &\otimes_{\What(R_{\GL(\Lambda)})} \widetilde{M^{\GL(\Lambda)}_1} \to \roundbr[\big]{\widetilde{M_{0, 1}}}_{\What(R_{\GL(\Lambda)}/\fraka_{\GL(\Lambda)})} \xrightarrow{\Psi_0} \\
        &\xrightarrow{\Psi_0} M_{0, \What(R_{\GL(\Lambda)}/\fraka_{\GL(\Lambda)})} \to \What(R_{\GL(\Lambda)}/\fraka_{\GL(\Lambda)}) \otimes_{\What(R_{\GL(\Lambda)})} M^{\GL(\Lambda)}
    \end{align*}
    together induce an isomorphism of finite free $\What(R_{\calG} \times_{R^{\calG}/\fraka_{\calG}} R_{\GL(\Lambda)}/\fraka_{\GL(\Lambda)})$-modules
    \[
        \What \roundbr[\big]{R_{\calG} \times_{R^{\calG}/\fraka_{\calG}} R_{\GL(\Lambda)}/\fraka_{\GL(\Lambda)}} \otimes_{\What(R^{\GL(\Lambda)})} \widetilde{M^{\GL(\Lambda)}_1} \to \What \roundbr[\big]{R_{\calG} \times_{R^{\calG}/\fraka_{\calG}} R_{\GL(\Lambda)}/\fraka_{\GL(\Lambda)}} \otimes_{\What(R^{\GL(\Lambda)})} M^{\GL(\Lambda)}.
    \]
    We further choose a lift of this isomorphism to an isomorphism of finite free $R_{\GL(\Lambda)}$-modules
    \[
        \Psi^{\GL(\Lambda)} \colon \widetilde{M^{\GL(\Lambda)}_1} \to M^{\GL(\Lambda)}
    \]
    so that we obtain a deformation
    \[
        \roundbr[\big]{M^{\GL(\Lambda)}, M^{\GL(\Lambda)}_1, \Psi^{\GL(\Lambda)}} \in \Def_{\GL(\Lambda)}(R_{\GL(\Lambda)})
    \]
    that is universal by \Cref{thm:universal-deformation-display}.

    In summary we have now constructed a commutative diagram
    \[
    \begin{tikzcd}
        \Spf(R_{\calG}) \ar[r] \ar[d]
        & \Spf \roundbr[\big]{R_{\GL(\Lambda)}} \ar[d]
        \\
        \Def_{\calG} \ar[r]
        & \Def_{\GL(\Lambda)}
    \end{tikzcd}
    \]
    where the right vertical morphism is an isomorphism.
    In the following we will abuse notation by identifying $\Spf(R_{\GL(\Lambda)})$ with $\Def_{\GL(\Lambda)}$ and viewing $\Spf(R_{\calG})$ as a sub-deformation problem of $\Def_{\calG}$.

    The goal is now to show that
    \[
        (M^{\calG}, M^{\calG}_1, \Psi^{\calG}) \in \Def_{\calG}(R_{\calG})
    \]
    is a universal deformation, i.e.\ that $\Def_{\calG} = \Spf(R_{\calG})$.
\end{construction}

\begin{lemma} \label{lem:def-g-fiber-products}
    Let
    \[
    \begin{tikzcd}
        R''' \ar[r] \ar[d]
        & R'' \ar[d]
        \\
        R' \ar[r]
        & R
    \end{tikzcd}
    \]
    be a fiber product diagram of complete Noetherian local $\calO_{\Ebrev}$-algebras with residue field $\Fpbar$ such that the following conditions are satisfied.
    \begin{itemize}
        \item
        $R'$ and $R'''$ are $p$-torsionfree and $\frakm_{R'} = \sqrt{pR'}$ and $\frakm_{R'''} = \sqrt{pR'''}$.

        \item
        The ideal $\ker(R' \to R) \subseteq R'$ has nilpotent divided powers (this implies that $\ker(R''' \to R'') \subseteq R'''$ has nilpotent divided powers as well).
    \end{itemize}
    Then the map
    \[
        \Def_{\calG}(R''') \to \Def_{\calG}(R') \times_{\Def_{\calG}(R)} \Def_{\calG}(R'')
    \]
    is a bijection.
\end{lemma}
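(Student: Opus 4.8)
The plan is to reduce the statement to a gluing problem and then feed it into Grothendieck--Messing theory, mirroring the argument for $\Def_{\GL(\Lambda)}$ in the second bullet of the proof of \Cref{prop:deformation-problem-display-pro-representable}, but with ordinary Grothendieck--Messing theory replaced by its $(\calG, \bmmu)$-equivariant refinement \Cref{thm:g-mu-grothendieck-messing}. First I would handle the underlying $(\calG, \bmmu)$-pairs, and only afterwards glue the Frobenius data $\Psi$; throughout I would use the notation of \Cref{rmk:g-mu-pair-pair} and write a $(\calG, \bmmu)$-display as a tuple $(M, M_1, \Psi)$.

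For the pairs I would first observe that, since $R' \to R$ is surjective and $R''' = R' \times_R R''$, the Zink ring construction turns this square into a fibre product square $\What(R''') = \What(R') \times_{\What(R)} \What(R'')$, which is a Milnor square along a surjection. Finite projective modules and, since $\calG$ is smooth and affine, $\calG$-torsors therefore descend, so a $\calG$-torsor over $\What(R''')$ is the same datum as $\calG$-torsors over $\What(R')$ and $\What(R'')$ together with an identification over $\What(R)$. For the Hodge filtration I would use that $\rmM^{\loc}$ is a ($p$-adic formal) scheme and that $R' \to R$ is surjective, so that $\Spf(R''')$ is the pushout of $\Spf(R') \leftarrow \Spf(R) \to \Spf(R'')$ and hence $\rmM^{\loc}(R''') = \rmM^{\loc}(R') \times_{\rmM^{\loc}(R)} \rmM^{\loc}(R'')$. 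Combining these, the restriction functor from $(\calG, \bmmu)$-pairs over $R'''$ to compatible pairs over $R'$ and $R''$ is an equivalence; in particular a compatible pair $(M'', M''_1)$ over $R''$ always lifts to the glued pair over $R'''$, so it is liftable in the sense of \Cref{def:dieudonne-g-mu-pair-grothendieck-messing} and \Cref{thm:g-mu-grothendieck-messing} becomes applicable.

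Given this, I would construct the inverse to the restriction map as follows. Starting from a compatible family consisting of $(M', M'_1, \Psi')$ over $R'$ and $(M'', M''_1, \Psi'')$ over $R''$ inducing the same $(M, M_1, \Psi)$ over $R$, I glue the pairs to a Dieudonné $(\calG, \bmmu)$-pair $(M''', M'''_1)$ over $R'''$ as above. By \Cref{lem:g-mu-m-1-grothendieck-messing}, applied to the surjections $R''' \to R''$ and $R' \to R$ (whose sources $R'''$ and $R'$ are $p$-torsionfree with $\frakm_{R'''} = \sqrt{pR'''}$ and $\frakm_{R'} = \sqrt{pR'}$), the associated $\calG$-torsor $\widetilde{M'''_1}$ over $\What(R''')$ is, compatibly with base change, the gluing of $\widetilde{M'_1}$ and $\widetilde{M''_1}$ along $\widetilde{M_1}$; the crucial point is that the $\calG$-structure on $\widetilde{M'''_1}$ matches the glued one, which is exactly the content of the relative theory together with the uniqueness of $\calG$-structures over $p$-torsionfree bases (\Cref{cor:g-structure-unique}). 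Since $\Psi'$ and $\Psi''$ are isomorphisms of $\calG$-torsors agreeing over $\What(R)$, they glue to an isomorphism $\Psi''' \colon \widetilde{M'''_1} \to M'''$, yielding a deformation $(M''', M'''_1, \Psi''') \in \Def_{\calG}(R''')$ that restricts to the given family. Uniqueness of the preimage follows because the pair $(M''', M'''_1)$ is forced by the scheme-theoretic gluing of the previous paragraph, while $\Psi'''$ is forced by the full faithfulness in \Cref{thm:g-mu-grothendieck-messing} for $R'''/R''$.

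I expect the main obstacle to be the interaction of the $\calG$-structure with the gluing, rather than the gluing of bare modules or filtrations. Because $\rmM^{\loc}$ is only flat and projective (not smooth), Hodge filtrations cannot be lifted freely; this is circumvented by the pushout description of $\Spf(R''')$, which simultaneously produces the lift and secures liftability. The genuinely delicate input is that the indirectly defined $\calG$-structure on $\widetilde{M_1}$ (\Cref{prop:g-mu-m-1-tilde}) is compatible with the fibre product, and it is precisely here that one must invoke the relative Grothendieck--Messing formalism of \Cref{lem:g-mu-m-1-grothendieck-messing} and \Cref{thm:g-mu-grothendieck-messing} --- exactly as ordinary Grothendieck--Messing theory \Cref{thm:grothendieck-messing} was invoked in \Cref{prop:deformation-problem-display-pro-representable}.
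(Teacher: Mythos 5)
Your construction is correct in substance, but it is not quite the paper's argument, and its uniqueness half has a gap that needs repair. The paper's proof is literally the second bullet of the proof of \Cref{prop:deformation-problem-display-pro-representable} with \Cref{thm:g-mu-grothendieck-messing} in place of \Cref{thm:grothendieck-messing}: fix the object over $R''$, lift it canonically to a display for $R'''/R''$ (and its restriction over $R$ to a display for $R'/R$), so that the fibre of $\Def_{\calG}(R''') \to \Def_{\calG}(R'')$ and the fibre of $\Def_{\calG}(R') \to \Def_{\calG}(R)$ are both identified with sets of lifts of the Hodge filtration on a fixed canonical module; the bijection then comes from $\rmM^{\loc}(R''') = \rmM^{\loc}(R') \times_{\rmM^{\loc}(R)} \rmM^{\loc}(R'')$, which is the only gluing that ever occurs. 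You instead glue everything --- torsor, filtration, and $\Psi$ --- over the square $\What(R''') = \What(R') \times_{\What(R)} \What(R'')$ (true, but it deserves a line of proof via the componentwise description of $\What \subseteq W$ and $\frakm_{R'''} = \frakm_{R'} \times_{\frakm_R} \frakm_{R''}$), reserving Grothendieck--Messing theory for uniqueness. This buys something real: \Cref{thm:g-mu-grothendieck-messing}, unlike \Cref{thm:grothendieck-messing}, carries a liftability hypothesis, and your gluing of pairs is exactly what verifies it for the objects at hand --- a point the paper's one-line proof leaves implicit. On the other hand, your existence step does not need \Cref{lem:g-mu-m-1-grothendieck-messing} or \Cref{cor:g-structure-unique} at all: that $\widetilde{M'''_1}$ restricts, as a $\calG$-torsor, to $\widetilde{M'_1}$ and $\widetilde{M''_1}$ compatibly is the base-change compatibility built into \Cref{prop:g-mu-m-1-tilde}, and Milnor patching then glues $\Psi'$ and $\Psi''$; \Cref{lem:g-mu-m-1-grothendieck-messing} concerns the relative construction, where the filtration lives only over the quotient, and says nothing about fibre products of base rings, so it is the wrong tool for the compatibility you single out as "crucial".

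The genuine gap is in uniqueness. Full faithfulness in \Cref{thm:g-mu-grothendieck-messing} for $R'''/R''$, applied to two preimages $y_1, y_2$ of the same element of the fibre product, produces an isomorphism $\theta \colon y_1 \to y_2$ of displays \emph{for $R'''/R''$}, i.e.\ one respecting the filtration over $R''$ only; the observation that "the pair is forced by gluing" does not yet imply that $\theta$ carries the $R'''$-filtration of $y_1$ to that of $y_2$, so $\Psi'''$ is not "forced" by this alone. To close the argument you must restrict $\theta$ to $R'$ and identify it, via full faithfulness for $R'/R$, with the given filtration-preserving isomorphism $y_1|_{R'} \cong y_2|_{R'}$; this identification requires the two given isomorphisms to agree after restriction to $R$, which uses that deformations of Dieudonné displays admit no nontrivial automorphisms (rigidity --- implicitly used by the paper as well, and available for instance through \Cref{thm:p-div-groups-disps} and rigidity of $p$-divisible groups). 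Once $\theta$ is known to respect the filtration over both $R'$ and $R''$, one concludes with $\rmM^{\loc}(R''') = \rmM^{\loc}(R') \times_{\rmM^{\loc}(R)} \rmM^{\loc}(R'')$ --- the same filtration-gluing step that constitutes the paper's entire proof. So the fibre-product property of $\rmM^{\loc}$-points cannot be confined to your existence half; it is equally the crux of uniqueness.
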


\begin{proof}
    This follows from the argument that was given in the second bullet point of the proof of \Cref{prop:deformation-problem-display-pro-representable}, using \Cref{thm:g-mu-grothendieck-messing} instead of \Cref{thm:grothendieck-messing}.
\end{proof}

\begin{remark}
    The assumptions in \Cref{lem:def-g-fiber-products} are in particular satisfied for the fiber products
    \begin{gather*}
        \calO_K[\varepsilon] \times_{\calO_K} \calO_K[t]/t^{\ell}, \qquad
        \calO_K[\varepsilon_1, \varepsilon_2] \times_{\calO_K} \calO_K[t]/t^{\ell}, \\
        \calO_K[t]/t^{\ell + 1} \times_{\calO_K[t]/t^{\ell}} \calO_K[t]/t^{\ell + 1},
    \end{gather*}
    for $\ell \in \ZZ_{> 0}$.

    Thus we have well-defined tangent spaces
    \[
        T_x \Def_{\calG}
    \]
    at points $x \in \Def_{\calG}(\calO_K)$, and given a point $y \in \Def_{\calG}(\calO_K[t]/t^{\ell})$ with image $x \in \Def_{\calG}(\calO_K)$, the fiber
    \[
        \fib_y \roundbr[\Big]{\Def_{\calG} \roundbr[\big]{\calO_K[t]/t^{\ell + 1}}}
    \]
    naturally has the structure of a $T_x \Def_{\calG}$-pseudotorsor (see for example \cite{schlessinger}).
\end{remark}

\begin{lemma} \label{lem:def-g-tangent-space}
    Let $x \in \Spf(R_{\calG})(\calO_K)$.
    Then we have
    \[
        T_x \Spf(R_{\calG}) = T_x \Def_{\calG}.
    \]
\end{lemma}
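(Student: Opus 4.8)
The plan is to compare tangent spaces along the commutative square from Construction~\ref{con:universal-deformation-g-mu-2}, in which the right vertical map $\Spf(R_{\GL(\Lambda)}) \to \Def_{\GL(\Lambda)}$ is an isomorphism by \Cref{thm:universal-deformation-display}. Applying $T_x$ to this square produces a commutative diagram
\[
\begin{tikzcd}
    T_x \Spf(R_{\calG}) \ar[r] \ar[d]
    & T_x \Spf(R_{\GL(\Lambda)}) \ar[d, "\cong"]
    \\
    T_x \Def_{\calG} \ar[r]
    & T_x \Def_{\GL(\Lambda)},
\end{tikzcd}
\]
and the statement of the lemma becomes the assertion that the left vertical arrow is a bijection.

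First I would record that the relevant test ring $\calO_K[\varepsilon]$ is $p$-torsionfree, has perfect residue field $k_K$, and satisfies $\frakm = \sqrt{p \calO_K[\varepsilon]}$, so that \Cref{cor:g-structure-unique} applies to it. This yields injectivity of the bottom horizontal arrow: a vector in $T_x \Def_{\calG}$ is a Dieudonné $(\calG, \bmmu)$-display over $\calO_K[\varepsilon]$ deforming $x$, and by the uniqueness of the compatible $\calG$-structure it is determined by its underlying Dieudonné display. The top horizontal arrow is injective since $R_{\GL(\Lambda)} \to R_{\calG}$ is surjective, so that $\Spf(R_{\calG}) \to \Spf(R_{\GL(\Lambda)})$ is a closed immersion. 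A diagram chase then shows that the left vertical arrow is injective as well.

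For surjectivity I would take $v \in T_x \Def_{\calG}$ and let $w \in T_x \Spf(R_{\GL(\Lambda)})$ be the vector corresponding, via the right vertical isomorphism, to the image of $v$ in $T_x \Def_{\GL(\Lambda)}$. The vector $w$ classifies the filtration $M_1$ of the pair underlying $v$. Since $v$ is a $(\calG, \bmmu)$-display, this pair is a $(\calG, \bmmu)$-pair, so by \Cref{rmk:g-mu-pair-pair} the corresponding $\calO_K[\varepsilon]$-point of $\Grass_{\Lambda, d}$ actually lands in $\rmM^{\loc}$; as $R_{\calG}$ is the completed local ring of $\rmM^{\loc}$, the classifying map $R_{\GL(\Lambda)} \to \calO_K[\varepsilon]$ factors through the quotient $R_{\calG}$, producing a vector $\widetilde{v} \in T_x \Spf(R_{\calG})$. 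By commutativity, the image of $\widetilde{v}$ in $T_x \Def_{\calG}$ and the vector $v$ have the same image $w$ under the injective bottom arrow, hence they coincide. This establishes surjectivity and completes the proof.

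The hard part is the surjectivity step, and its crux is the geometric input that the filtration of a $(\calG, \bmmu)$-display lands in the local model $\rmM^{\loc}$ rather than merely in the ambient Grassmannian (\Cref{rmk:g-mu-pair-pair}), combined with the uniqueness of the $\calG$-structure from \Cref{cor:g-structure-unique}, which is what lets me identify the produced lift $\widetilde{v}$ with the original $v$. Once these two inputs are in place, the remainder is a formal diagram chase; the verification that $\calO_K[\varepsilon]$ satisfies the hypotheses of \Cref{cor:g-structure-unique} is routine.
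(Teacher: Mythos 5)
Your two injectivity arguments are fine (including the check that $\calO_K[\varepsilon]$ satisfies the hypotheses of \Cref{cor:g-structure-unique}), and granting your factorization claim the concluding diagram chase would indeed work. But the factorization claim itself — that $w$ factors through $R_{\calG}$ — is a genuine gap, and it is the whole content of the lemma. The sentence ``the vector $w$ classifies the filtration $M_1$ of the pair underlying $v$'' hides a choice of identification of the underlying module $M$ of $v$ with $\Lambda_{\What(\calO_K[\varepsilon])}$. The identification implicit in $w$ comes from the isomorphism $\Spf(R_{\GL(\Lambda)}) \cong \Def_{\GL(\Lambda)}$, so it is constrained to intertwine $w^{*}\Psi^{\GL(\Lambda)}$ with $\Psi$, i.e.\ it matches the full Dieudonné \emph{display} structures; it has no reason to be compatible with the $\calG$-structure on $M$. \Cref{rmk:g-mu-pair-pair} places the point classifying $M_1$ inside $\rmM^{\loc}$ only for trivializations compatible with the $\calG$-structure, and $\rmM^{\loc} \subseteq \Grass_{\Lambda, d}$ is stable under $\calG$ but not under $\GL(\Lambda)$, so you cannot conclude that $w$ lands in $\rmM^{\loc}$. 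If you repair this by choosing a $\calG$-compatible trivialization instead (which exists by smoothness of $\calG$), you do get some $\widetilde{v} \in T_x \Spf(R_{\calG})$, but its image in $T_x \Def_{\GL(\Lambda)}$ is the class of the pair of $v$ equipped with the Frobenius $\widetilde{v}^{*}\Psi^{\calG}$ rather than $\Psi$; these need not be isomorphic as deformations, so the appeal to injectivity of the bottom arrow no longer identifies $\widetilde{v}$ with $v$. Note also that the statement you are trying to justify — that the image of $T_x \Def_{\calG} \to T_x \Spf(R_{\GL(\Lambda)})$ lies in $T_x \Spf(R_{\calG})$ — is essentially equivalent to the lemma, and in the paper it is only obtained a posteriori as part of \Cref{thm:universal-deformation-g-mu-display}, whose proof relies on this very lemma through \Cref{lem:def-g-spreading-out}; an argument along your lines is therefore in danger of being circular.

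The missing idea is a rank count via Grothendieck--Messing theory, which lets one avoid comparing the two trivializations altogether. The paper applies \Cref{thm:g-mu-grothendieck-messing} to the square-zero divided-power extension $\calO_K[\varepsilon] \to \calO_K$: deformations of the display at $x$ over $\calO_K[\varepsilon]$ correspond to lifts of the Hodge filtration, yielding an \emph{abstract} isomorphism $T_x \Spf(R_{\calG}) = T_x \rmM^{\loc} \to T_x \Def_{\calG}$, which the paper explicitly warns need not agree with the inclusion. This shows $T_x \Def_{\calG}$ is a finite free $\calO_K$-module of the same rank as $T_x \Spf(R_{\calG})$. Since $R_{\GL(\Lambda)} \to R_{\calG}$ is surjective, the composite $T_x \Spf(R_{\calG}) \subseteq T_x \Def_{\calG} \to T_x \Def_{\GL(\Lambda)} = T_x \Spf(R_{\GL(\Lambda)})$ is a split monomorphism, so the inclusion $T_x \Spf(R_{\calG}) \subseteq T_x \Def_{\calG}$ is a split inclusion of finite free $\calO_K$-modules of equal rank, hence an equality. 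Your proposal never invokes \Cref{thm:g-mu-grothendieck-messing}, and that is precisely the input it is missing.
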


\begin{proof}
    \Cref{thm:g-mu-grothendieck-messing} produces an isomorphism
    \[
        T_x \Spf(R_{\calG}) = T_x \rmM^{\loc} \to T_x \Def_{\calG}
    \]
    (that does not need to agree with the inclusion $T_x \Spf(R_{\calG}) \subseteq T_x \Def_{\calG}$).
    This implies that $T_x \Def_{\calG}$ is a finite free $\calO_K$-module of the same rank as $T_x \Spf(R_{\calG})$.
    Now the composition
    \[
        T_x \Spf(R_{\calG}) \subseteq T_x \Def_{\calG} \to T_x \Def_{\GL(\Lambda)} = T_x \Spf(R_{\GL(\Lambda)})
    \]
    is a split monomorphism of $\calO_K$-modules because $R_{\GL(\Lambda)} \to R_{\calG}$ is surjective.
    But this now implies that $T_x \Spf(R_{\calG}) = T_x \Def_{\calG}$ as desired.
\end{proof}

\begin{lemma} \label{lem:def-g-spreading-out}
    Let $(M, M_1, \Psi) \in \Def_{\calG}(\calO_K \doublesquarebr{t})$ such that
    \[
        x = (M, M_1, \Psi)_{\calO_K, t \mapsto 0} \in \Spf(R_{\calG})(\calO_K).
    \]
    Then we have
    \[
        (M, M_1, \Psi) \in \Spf(R_{\calG}) \roundbr[\big]{\calO_K \doublesquarebr{t}}.
    \]
\end{lemma}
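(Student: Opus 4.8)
The plan is to spread the point out order by order in $t$. Writing $S_{\ell} \coloneqq \calO_K[t]/t^{\ell}$ (so $S_1 = \calO_K$ and $\calO_K \doublesquarebr{t} = \varprojlim_\ell S_\ell$), I first observe that each $S_\ell$ is complete Noetherian local with perfect residue field, is $p$-torsionfree, and satisfies $\frakm_{S_\ell} = \sqrt{p S_\ell}$ (as $t$ is nilpotent in $S_\ell$). Since $R_{\calG}$ is complete, it suffices to show that $(M, M_1, \Psi)_{S_\ell} \in \Spf(R_{\calG})(S_\ell)$ for every $\ell$ and that these points are compatible; I will prove this by induction on $\ell$, the case $\ell = 1$ being the hypothesis. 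For the inductive step I consider the surjection $S_{\ell + 1} \to S_\ell$, whose kernel is square zero and carries the trivial (nilpotent) divided powers, so that it is one of the thickenings allowed in \Cref{lem:def-g-fiber-products}; I write $x \coloneqq (M, M_1, \Psi)_{\calO_K}$ and $y_\ell \coloneqq (M, M_1, \Psi)_{S_\ell} \in \Spf(R_{\calG})(S_\ell)$.

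The comparison of the two deformation problems along this thickening is governed by the torsor structures from \Cref{lem:def-g-fiber-products}. By the Remark following that lemma the fibre $\fib_{y_\ell}(\Def_{\calG}(S_{\ell + 1}))$ is a pseudotorsor under $T_x \Def_{\calG}$, and the analogous fibre for $\Spf(R_{\calG})$ is a pseudotorsor under $T_x \Spf(R_{\calG})$; the inclusion of the latter fibre into the former is equivariant along the map $T_x \Spf(R_{\calG}) \to T_x \Def_{\calG}$, which is an isomorphism by \Cref{lem:def-g-tangent-space}. Consequently, as soon as both fibres are nonempty the inclusion is a bijection. Since $(M, M_1, \Psi)_{S_{\ell + 1}}$ manifestly lies in the $\Def_{\calG}$-fibre, it then also lies in the $\Spf(R_{\calG})$-fibre, which is exactly the assertion $(M, M_1, \Psi)_{S_{\ell + 1}} \in \Spf(R_{\calG})(S_{\ell + 1})$ completing the induction.

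The one genuine point is the nonemptiness of the $\Spf(R_{\calG})$-fibre, i.e.\ that the classifying map $\chi_\ell \colon R_{\calG} \to S_\ell$ of $y_\ell$ admits a lift to $S_{\ell + 1}$; this cannot be extracted from smoothness because $\rmM^{\loc}$, and hence $R_{\calG}$, is in general singular. The key observation, which I expect to require the most care, is that such a lift is produced for free by the geometry of $(\calG, \bmmu)$-pairs: the Hodge filtration of the $(\calG, \bmmu)$-pair underlying $(M, M_1, \Psi)_{S_{\ell + 1}}$ automatically defines a point of $\rmM^{\loc}(S_{\ell + 1})$ (see \Cref{rmk:g-mu-pair-pair}), hence a map $R_{\calG} \to S_{\ell + 1}$. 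Because the filtration $M^{\calG}_1$ carried by the universal object over $R_{\calG}$ is the tautological one (\Cref{con:universal-deformation-g-mu}), this map reduces modulo $t^\ell$ to the classifying map $\chi_\ell$ of $y_\ell$, and is therefore the desired lift; it is precisely here that the hypothesis that we deal with a genuine $(\calG, \bmmu)$-display, rather than merely a display with compatible tangent data, enters. Finally, the points $y_\ell$ are unique lifts of one another — the map $\Spf(R_{\calG})(S_\ell) \to \Spf(R_{\GL(\Lambda)})(S_\ell)$ being injective since $R_{\GL(\Lambda)} \to R_{\calG}$ is surjective — so the corresponding maps $\chi_\ell$ are compatible and assemble to a single map $R_{\calG} \to \calO_K \doublesquarebr{t}$, giving $(M, M_1, \Psi) \in \Spf(R_{\calG})(\calO_K \doublesquarebr{t})$ as required.
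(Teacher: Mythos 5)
Your proposal has the same skeleton as the paper's proof: induction on $\ell$ over $S_\ell \coloneqq \calO_K[t]/t^{\ell}$, reduction of the inductive step --- via the pseudotorsor structures coming from \Cref{lem:def-g-fiber-products} and the identification $T_x \Spf(R_{\calG}) = T_x \Def_{\calG}$ of \Cref{lem:def-g-tangent-space} --- to the single claim that $\fib_{y_\ell}\roundbr[\big]{\Spf(R_{\calG})(S_{\ell + 1})}$ is non-empty, and finally the assembly of the compatible classifying maps. All of that part is correct and is exactly what the paper does.

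The gap is in the step you yourself flag as the crux, and as written it does not go through. A Dieudonné $(\calG, \bmmu)$-pair over $S_{\ell + 1}$ does not ``automatically'' define a point of $\rmM^{\loc}(S_{\ell + 1})$: by \Cref{def:g-mu-pairs} and \Cref{rmk:g-mu-pair-pair} it defines a $\calG$-equivariant morphism from the torsor to $\rmM^{\loc}$, i.e.\ a point of the quotient stack $\squarebr[\big]{L^+ \calG \backslash \rmM^{\loc}}$, and an honest $S_{\ell + 1}$-point of $\rmM^{\loc}$ is obtained only after choosing a trivialization of $\What(S_{\ell + 1}) \otimes_{\What(\calO_K \doublesquarebr{t})} M$; changing the trivialization moves the point by the $\calG$-action. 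Consequently your assertion that the resulting map $R_{\calG} \to S_{\ell + 1}$ reduces to $\chi_\ell$ modulo $t^{\ell}$ is unjustified: for an arbitrary trivialization the reduction is $g \cdot \chi_\ell$, where $g \in \calG(\What(S_\ell))$ measures the discrepancy between your trivialization mod $t^{\ell}$ and the one implicit in $y_\ell$, and the tautologicality of $M^{\calG}_1$ does nothing to remove this discrepancy. The missing idea --- which is the actual content of the paper's proof of non-emptiness --- is to use the inductive hypothesis more strongly: $y_\ell \in \Spf(R_{\calG})(S_\ell)$ furnishes an isomorphism of $(M, M_1, \Psi)_{S_\ell}$ with the pullback of the universal object, hence, via the tautological trivialization $M^{\calG} = \Lambda_{\What(R_{\calG})}$, a \emph{specific} trivialization of $\What(S_\ell) \otimes_{\What(\calO_K \doublesquarebr{t})} M$ under which the Hodge filtration classifies exactly $\chi_\ell$. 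One then lifts this particular trivialization along $\What(S_{\ell + 1}) \to \What(S_\ell)$, which is possible because $\calG$ is smooth and this surjection has square-zero kernel; the point of the Grassmannian cut out by the lifted trivialization lies in $\rmM^{\loc}$ precisely because $(M, M_1)$ is a $(\calG, \bmmu)$-pair, and it lifts $\chi_\ell$ by construction. Once this trivialization-lifting step is inserted, your argument coincides with the paper's.
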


\begin{proof}
    We inductively show that
    \[
        (M, M_1, \Psi)_{\calO_K[t]/t^{\ell}} \in \Spf(R_{\calG}) \roundbr[\big]{\calO_K[t]/t^{\ell}}
    \]
    for all $\ell \in \ZZ_{> 0}$.
    
    So assume that this holds for a fixed $\ell$.
    Setting $y = (M, M_1, \Psi)_{\calO_K[t]/t^{\ell}}$ we then have
    \[
        (M, M_1, \Psi)_{\calO_K[t]/t^{\ell + 1}} \in \fib_y \roundbr[\Big]{\Def_{\calG} \roundbr[\big]{\calO_K[t]/t^{\ell + 1}}}
    \]
    so that it suffices to show that the map
    \[
        \fib_y \roundbr[\Big]{\Spf(R_{\calG}) \roundbr[\big]{\calO_K[t]/t^{\ell + 1}}} \to \fib_y \roundbr[\Big]{\Def_{\calG} \roundbr[\big]{\calO_K[t]/t^{\ell + 1}}}
    \]
    is a bijection.
    As it is in fact a morphism of $(T_x \Spf(R_{\calG}) = T_x \Def_{\calG})$-pseudotorsors (see \Cref{lem:def-g-tangent-space}) it is furthermore sufficient to show that $\fib_y (\Spf(R_{\calG})(\calO_K[t]/t^{\ell + 1}))$ is non-empty.

    Now the assumption $y \in \Spf(R_{\calG})(\calO_K[t]/t^{\ell})$ provides us with a trivialization
    \[
        \What \roundbr[\big]{\calO_K[t]/t^{\ell}} \otimes_{\What(\calO_K \doublesquarebr{t})} M \to \What \roundbr[\big]{\calO_K[t]/t^{\ell}} \otimes_{y, \What(R_{\calG})} M^{\calG} \to \Lambda_{\What(\calO_K/t^{\ell})}
    \]
    such that $y \in \Spf(R_{\calG})(\calO_K[t]/t^{\ell})$ corresponds to the direct summand
    \begin{gather*}
        \What \roundbr[\big]{\calO_K[t]/t^{\ell}} \otimes_{\What(\calO_K \doublesquarebr{t})} \roundbr[\big]{M_1/\Ihat_{\calO_K \doublesquarebr{t}} M} \\
        \subseteq \What \roundbr[\big]{\calO_K[t]/t^{\ell}} \otimes_{\What(\calO_K \doublesquarebr{t})} \roundbr[\big]{M/\Ihat_{\calO_K \doublesquarebr{t}} M} \cong \Lambda_{\calO_K[t]/t^{\ell}}.
    \end{gather*}
    Lifting this trivialization to one of
    \[
        \What \roundbr[\big]{\calO_K[t]/t^{\ell + 1}} \otimes_{\What(\calO_K \doublesquarebr{t})} M
    \]
    then gives rise to a lift $y' \in \Spf(R_{\calG})(\calO_K[t]/t^{\ell + 1})$ of $y$ corresponding to
    \begin{gather*}
        \What \roundbr[\big]{\calO_K[t]/t^{\ell + 1}} \otimes_{\What(\calO_K \doublesquarebr{t})} \roundbr[\big]{M_1/\Ihat_{\calO_K \doublesquarebr{t}} M} \\
        \subseteq \What \roundbr[\big]{\calO_K[t]/t^{\ell + 1}} \otimes_{\What(\calO_K \doublesquarebr{t})} \roundbr[\big]{M/\Ihat_{\calO_K \doublesquarebr{t}} M} \cong \Lambda_{\calO_K[t]/t^{\ell + 1}}
    \end{gather*}
    so that $\fib_y (\Spf(R_{\calG})(\calO_K[t]/t^{\ell + 1}))$ is non-empty as desired.
\end{proof}

\begin{lemma} \label{lem:def-g-connecting-points}
    Let $(M', M'_1, \Psi'), (M'', M''_1, \Psi'') \in \Def_{\calG}(\calO_K)$ such that there exists an isomorphism of Dieudonné $(\calG, \bmmu)$-pairs over $\calO_K$
    \[
        (M', M'_1) \to (M'', M''_1)
    \]
    inducing the identity on $(M_0, M_{0, 1})_{k_K}$.
    Then there exists
    \[
        (M, M_1, \Psi) \in \Def_{\calG} \roundbr[\big]{\calO_K \doublesquarebr{t}}
    \]
    that specializes to $(M', M'_1, \Psi')$ respectively $(M'', M''_1, \Psi'')$ under the morphism
    \[
        \calO_K \doublesquarebr{t} \to \calO_K, \qquad t \mapsto \varpi_K \quad \text{respectively} \quad t \mapsto 0.
    \]
\end{lemma}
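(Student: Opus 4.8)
The plan is to reduce the statement to the construction of a single interpolating ``gauge transformation'' and then to build that transformation by a lifting argument along a well-chosen surjection of rings.

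First I would use the given isomorphism $\alpha \colon (M', M'_1) \to (M'', M''_1)$ of Dieudonné $(\calG, \bmmu)$-pairs to identify the two underlying pairs. Since $\Def_{\calG}$ is set-valued, i.e.\ it classifies deformations up to isomorphism, we may replace $(M', M'_1, \Psi')$ by the isomorphic display $(M'', M''_1, \alpha \circ \Psi' \circ \widetilde{\alpha}^{-1})$, where $\widetilde{\alpha}$ is the isomorphism of $\calG$-torsors induced by the functoriality of \Cref{prop:g-mu-m-1-tilde}. Writing $(M, M_1) \coloneqq (M'', M''_1)$ and renaming, we are reduced to the case of two isomorphisms $\Psi', \Psi'' \colon \widetilde{M_1} \to M$ of $\calG$-torsors over $\What(\calO_K)$, both reducing to $(\Psi_0)_{k_K}$ over $W(k_K)$. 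As $\calG$ is smooth and the residue torsor $(M_0)_{k_K}$ is trivial, $M$ is a trivial $\calG$-torsor over $\What(\calO_K)$, so $g \coloneqq \Psi' \circ (\Psi'')^{-1}$ defines an element $g \in \calG(\What(\calO_K))$. The hypotheses that $\alpha$ induces the identity on $(M_0, M_{0, 1})_{k_K}$ and that $\Psi', \Psi''$ deform $\Psi_0$ together show that $g$ maps to $1$ under $\calG(\What(\calO_K)) \to \calG(W(k_K))$.

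The core of the argument is then to produce an element $g_t \in \calG(\What(\calO_K \doublesquarebr{t}))$ with $g_t|_{t = 0} = 1$ and $g_t|_{t = \varpi_K} = g$. Given such a $g_t$ I would take the underlying pair of the desired family to be $(M, M_1)_{\calO_K \doublesquarebr{t}}$ and set $\Psi \coloneqq g_t \cdot (\Psi'')_{\calO_K \doublesquarebr{t}}$. Because the functor $(M, M_1) \mapsto \widetilde{M_1}$ commutes with base change, this is an isomorphism of $\calG$-torsors over $\What(\calO_K \doublesquarebr{t})$; it reduces to $(\Psi_0)_{k_K}$ over the residue field $k_K$ (so that $(M, M_1, \Psi) \in \Def_{\calG}(\calO_K \doublesquarebr{t})$), and its specializations are $g \cdot \Psi'' = \Psi'$ at $t = \varpi_K$ and $\Psi''$ at $t = 0$, as required. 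To build $g_t$ I would exploit the ring homomorphism
\[
    \calO_K \doublesquarebr{t} \to \calO_K \times_{k_K} \calO_K, \qquad f \mapsto \roundbr[\big]{f|_{t = \varpi_K}, f|_{t = 0}},
\]
which is surjective: any compatible pair $(a, b)$ with $a \equiv b \bmod \frakm_K$ is hit by $f(t) = b + \tfrac{a - b}{\varpi_K} t$, using $a - b \in (\varpi_K)$. Applying $\What$, using that it is compatible with this fibre product (so that $\What(\calO_K \times_{k_K} \calO_K) \cong \What(\calO_K) \times_{W(k_K)} \What(\calO_K)$) and preserves surjectivity, the pair $(g, 1)$ — which lies in $\calG(\What(\calO_K)) \times_{\calG(W(k_K))} \calG(\What(\calO_K))$ precisely because $g \equiv 1$ over $W(k_K)$ — becomes a point of $\calG$ over the quotient ring, and any lift to $g_t \in \calG(\What(\calO_K \doublesquarebr{t}))$ solves the problem.

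The main obstacle is exactly this last lifting step: the kernel of $\calO_K \doublesquarebr{t} \to \calO_K \times_{k_K} \calO_K$ is the principal ideal $(t(t - \varpi_K))$, which is not nilpotent, so one cannot invoke formal smoothness of $\calG$ in a single stroke. I would instead proceed by successive approximation, lifting compatibly through the (essentially square-zero) quotients attached to the $\frakm$-adic filtration of $\calO_K \doublesquarebr{t}$ and passing to the limit, using smoothness of $\calG$ at each finite stage and completeness of $\What(\calO_K \doublesquarebr{t})$. The two auxiliary inputs — that the Zink ring preserves the relevant surjection and is compatible with the fibre product $\calO_K \times_{k_K} \calO_K$ — are of the kind established in Zink's work (\cite{zink-dieudonne}) and used throughout \Cref{sec:displays}; verifying them in the present generality and making the limit argument precise is where the genuine work lies.
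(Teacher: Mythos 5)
Your proposal is correct and follows essentially the same route as the paper: identify the two pairs via the given isomorphism, base change to $\calO_K \doublesquarebr{t}$, and lift along the surjection $\calO_K \doublesquarebr{t} \to \calO_K \times_{k_K} \calO_K$, $t \mapsto (\varpi_K, 0)$, using smoothness of $\calG$. Your gauge-element formulation (producing $g_t$ with $g_t|_{t=0} = 1$, $g_t|_{t = \varpi_K} = g$) is just the trivialized version of the paper's direct lifting of the isomorphism $(\Psi', \Psi'')$ of $\calG$-torsors, and your successive-approximation argument through the $\frakm$-adic filtration merely spells out the lifting step that the paper leaves implicit.
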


\begin{proof}
    Choose an isomorphism $(M', M'_1) \cong (M'', M''_1)$ as above and define $(M, M_1)$ as the base change of any of these two Dieudonné $(\calG, \bmmu)$-pairs along $\calO_K \to \calO_K \doublesquarebr{t}$.
    Now consider the surjective morphism
    \[
        \calO_K \doublesquarebr{t} \to \calO_K \times_{k_K} \calO_K, \qquad t \mapsto (\varpi_K, 0).
    \]
    Then the isomorphisms $\Psi'$ and $\Psi''$ induce an isomorphism
    \[
        (\Psi', \Psi'') \colon (\calO_K \times_{k_K} \calO_K) \otimes_{\calO_K \doublesquarebr{t}} \widetilde{M_1} \to (\calO_K \times_{k_K} \calO_K) \otimes_{\calO_K \doublesquarebr{t}} M
    \]
    that we can lift to an isomorphism $\Psi \colon \widetilde{M_1} \to M$ to obtain an object
    \[
        (M, M_1, \Psi) \in \Def_{\calG} \roundbr[\big]{\calO_K \doublesquarebr{x}}
    \]
    with the desired properties.
\end{proof}

\begin{lemma} \label{lem:def-g-lift-reduced-p-torsionfree}
    Let $(M, M_1, \Psi) \in \Def_{\calG}(R)$.
    Then there exists a morphism $R' \to R$ with $R'$ reduced and $p$-torsionfree and a preimage
    \[
        (M', M'_1, \Psi') \in \Def_{\calG}(R')
    \]
    of $(M, M_1, \Psi)$.
\end{lemma}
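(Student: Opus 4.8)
The plan is to realize $(M, M_1, \Psi)$ as the pullback of a deformation living over a power series ring $R' = R_{\calG}\doublesquarebr{t_1, \dotsc, t_n}$, where $R_{\calG}$ is the completed local ring of the local model that already carries the (universal) filtration. The point is that $R_{\calG}$ is reduced and $p$-torsionfree, so the same holds for any power series ring over it, while the torsor and the isomorphism $\Psi$ can be transported by smoothness of $\calG$.

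First I would trivialize. Since $(M, M_1, \Psi)$ is a deformation of $(M_0, M_{0,1}, \Psi_0)$ with $M_0 \cong \Lambda_W$, and $\calG$ is smooth while $\What(R)$ is a complete (hence Henselian) local ring with residue field $k$, the $\calG$-torsor $M$ is trivial; fix a trivialization $M \cong \Lambda_{\What(R)}$ lifting the given one on $M_0$ (the lifting along $\What(R) \to W$ being obtained from smoothness of $\calG$ and completeness, exactly as in the proof of \Cref{lem:restriction-smooth-dieudonne}). Under this trivialization the filtration $M_1$ becomes a point of $\rmM^{\loc}(R)$ reducing to the base point $z_0 \in \rmM^{\loc}(k)$, i.e.\ a local $\calO_F$-morphism $z \colon R_{\calG} \to R$, where $R_{\calG}$ is the completed local ring of $\rmM^{\loc}_{\calO_F}$ at $z_0$ as in Construction~\ref{con:universal-deformation-g-mu}. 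By construction the base change along $z$ of the universal Dieudonné $(\calG, \bmmu)$-pair $(M^{\calG}, M^{\calG}_1)$ over $R_{\calG}$ recovers $(M, M_1)$.

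Because $\rmM^{\loc}$ is flat and normal over $\calO_E$, its completed local ring $R_{\calG}$ is $p$-torsionfree (by flatness) and reduced, in fact a normal domain; hence so is any power series ring over it. Now choose $r_1, \dotsc, r_n \in \frakm_R$ whose images generate $\frakm_R$ over the image of $R_{\calG}$, set $R' \coloneqq R_{\calG}\doublesquarebr{t_1, \dotsc, t_n}$, and let $R' \to R$ be the map extending $z$ by $t_i \mapsto r_i$. Then $R'$ is a complete Noetherian local $\calO_F$-algebra with residue field $k$ that is reduced and $p$-torsionfree, the natural inclusion $R_{\calG} \to R'$ composes with $R' \to R$ to give $z$, and by completeness and Nakayama the map $R' \to R$ is surjective. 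I then set $(M', M'_1) \coloneqq (M^{\calG}, M^{\calG}_1)_{R'}$, a Dieudonné $(\calG, \bmmu)$-pair over $R'$ whose base change along $R' \to R$ is $(M, M_1)$, and correspondingly $\widetilde{M'_1}$ base changes to $\widetilde{M_1}$ by \Cref{prop:g-mu-m-1-tilde}. Finally, the functor of $\calG$-isomorphisms $\widetilde{M'_1} \to M'$ over $\What(R')$ is a torsor under an inner form of $\calG$, hence smooth; since $R' \to R$ is surjective the map $\What(R') \to \What(R)$ is surjective, and $\What(R')$ is a complete local ring, so the section $\Psi$ lifts to a $\calG$-isomorphism $\Psi' \colon \widetilde{M'_1} \to M'$ with $\Psi'|_R = \Psi$, again by the smoothness argument of \Cref{lem:restriction-smooth-dieudonne}. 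Then $(M', M'_1, \Psi') \in \Def_{\calG}(R')$ is the desired preimage.

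The main obstacle is the geometric input that $R_{\calG}$ is reduced and $p$-torsionfree, i.e.\ that the completed local rings of the local model $\rmM^{\loc}$ are well-behaved: this is where the flatness and normality of $\bbM^{\loc}$ over $\calO_E$ enter, together with the fact that these properties survive the base change to $\calO_F$ and the passage to completed local rings. The lifting statements for the trivialization and for $\Psi$ are then routine consequences of the smoothness of $\calG$ and of completeness, handled exactly as elsewhere in this section.
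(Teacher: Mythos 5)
Your proof is correct and follows essentially the same route as the paper: the paper's (terse) proof likewise uses flatness and formal reducedness of $\rmM^{\loc}$ to produce a reduced, $p$-torsionfree lift of the underlying Dieudonné $(\calG, \bmmu)$-pair, adjoins power series variables $R' \rightsquigarrow R'\doublesquarebr{x_1, \dotsc, x_{\ell}}$ to make $R' \to R$ surjective, and then lifts $\Psi$ via smoothness. Your version merely makes explicit what the paper leaves implicit, namely trivializing the torsor so that the filtration becomes a map $z \colon R_{\calG} \to R$ and pulling back the universal pair, which is a faithful expansion rather than a different argument.
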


\begin{proof}
    Using flatness and formal reducedness of $\rmM^{\loc}$ we can find $R' \to R$ with $R'$ reduced and $p$-torsionfree and a Dieudonné $(\calG, \bmmu)$-pair $(M', M'_1)$ over $R'$ together with an isomorphism $(M', M'_1)_R \cong (M, M_1)$.
    Replacing $R'$ with $R' \doublesquarebr{x_1, \dotsc, x_{\ell}}$ for a suitable $\ell \in \ZZ_{> 0}$ if necessary, we may even assume that $R' \to R$ is surjective.
    Then $\Psi$ can be lifted to an isomorphism $\Psi' \colon \widetilde{M'_1} \to M'$ so that we obtain our desired lift.
\end{proof}

\begin{theorem} \label{thm:universal-deformation-g-mu-display}
    Assume that Assumption \ref{hyp:grothendieck-messing} is satisfied.
    Then the deformation $(M^{\calG}, M^{\calG}_1, \Psi^{\calG}) \in \Def_{\calG}(R_{\calG})$ from Construction \ref{con:universal-deformation-g-mu} is universal.
\end{theorem}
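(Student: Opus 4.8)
The plan is to show that the classifying morphism $f \colon \Spf(R_{\calG}) \to \Def_{\calG}$ attached to $(M^{\calG}, M^{\calG}_1, \Psi^{\calG})$ is an isomorphism of functors. First I would record that $f$ is a monomorphism. Indeed, by Construction \ref{con:universal-deformation-g-mu-2} the morphism $f$ fits into the commutative square whose composite $\Spf(R_{\calG}) \to \Def_{\calG} \to \Def_{\GL(\Lambda)}$ is identified, via \Cref{thm:universal-deformation-display}, with the closed immersion $\Spf(R_{\calG}) \hookrightarrow \Spf(R_{\GL(\Lambda)})$ induced by the surjection $R_{\GL(\Lambda)} \twoheadrightarrow R_{\calG}$. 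Since the latter is a monomorphism of functors, so is $f$, and it remains only to prove essential surjectivity; once this is done, the monomorphism property simultaneously yields the uniqueness of the classifying map, i.e.\ the universality.

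For surjectivity, given $\xi = (M, M_1, \Psi) \in \Def_{\calG}(R)$ I would first produce a candidate preimage: the underlying Dieudonné $(\calG, \bmmu)$-pair $(M, M_1)$ determines, after a local trivialization, a point of $\rmM^{\loc}(R)$ reducing to the base point, hence a map $x_{\xi} \colon R_{\calG} \to R$. By construction of $x_{\xi}$ the underlying pair of $f(x_{\xi})$ is isomorphic to that of $\xi$, and since by \Cref{thm:universal-deformation-display} the functor $\Def_{\GL(\Lambda)}$ is identified with the space of pair-deformations, $\xi$ and $f(x_{\xi})$ have the same image in $\Def_{\GL(\Lambda)}(R)$. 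Thus surjectivity is equivalent to the rigidity statement that a Dieudonné $(\calG, \bmmu)$-display deformation is determined, up to isomorphism, by its underlying $(\calG, \bmmu)$-pair. Over a base of the form $\calO_K$ this is exactly \Cref{cor:g-structure-unique}: the two compatible $\calG$-structures on the fixed underlying $\GL(\Lambda)$-display must agree, so $\xi \cong f(x_{\xi})$. Consequently $f$ is bijective on $\calO_K$-valued points for all finite $K/F$; this can also be seen more geometrically by connecting $\xi$ to $f(x_{\xi})$ with an $\calO_K \doublesquarebr{t}$-family via \Cref{lem:def-g-connecting-points} and then invoking the spreading-out \Cref{lem:def-g-spreading-out}.

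Upgrading from $\calO_K$-points to arbitrary test rings is where the work lies. Using \Cref{lem:def-g-lift-reduced-p-torsionfree} I would reduce surjectivity over a general $R$ to the case that $R$ is reduced and $p$-torsionfree: a preimage $\xi' \cong f(x')$ of a lift $\xi' \in \Def_{\calG}(R')$ along $R' \to R$ pushes forward, via the composite $R_{\calG} \to R' \to R$, to a preimage of $\xi$. For such $R$ the two candidate displays $\xi$ and $f(x_{\xi})$ again agree in $\Def_{\GL(\Lambda)}(R)$ and restrict, along every map $R \to \calO_K$, to isomorphic $\calO_K$-displays by the previous paragraph; the remaining point is that these classical specializations separate elements of $\Def_{\calG}(R)$ when $R$ is reduced and $p$-torsionfree. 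I would establish this by a spreading-out and induction argument along the $\calO_K \doublesquarebr{t}$- and $\calO_K[t]/t^{\ell}$-thickenings, using the pseudo-torsor structure furnished by the fiber-product \Cref{lem:def-g-fiber-products}, the tangent-space comparison \Cref{lem:def-g-tangent-space}, and \Cref{lem:def-g-spreading-out}.

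The main obstacle is precisely this last propagation step. Over $\calO_K$ the rigidity of the $\calG$-structure comes from the canonical Frobenius-equivariant comparison isomorphism of \Cref{prop:dieudonne-display-canonical-iso}, whose construction relies on the hypothesis $\frakm_R = \sqrt{pR}$ and is therefore unavailable over thicker or higher-dimensional reduced $p$-torsionfree bases; bridging this gap is exactly the role of the connecting and spreading-out lemmas. I should also stress that the entire comparison rests on Assumption \ref{hyp:grothendieck-messing}, since it is this assumption that makes the Grothendieck-Messing equivalence \Cref{thm:g-mu-grothendieck-messing} — and hence both \Cref{lem:def-g-fiber-products} and \Cref{lem:def-g-tangent-space} — available for Dieudonné $(\calG, \bmmu)$-displays.
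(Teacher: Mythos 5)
Your argument has a genuine gap, and it sits at the center of the proof. The claim that ``$\xi$ and $f(x_{\xi})$ have the same image in $\Def_{\GL(\Lambda)}(R)$'' is false: $\Def_{\GL(\Lambda)}$ is not ``the space of pair-deformations'' but the deformation functor of Dieudonné \emph{displays}, and its identification with $\Spf(R_{\GL(\Lambda)})$ in \Cref{thm:universal-deformation-display} goes through the universal display $(M^{\GL(\Lambda)}, M^{\GL(\Lambda)}_1, \Psi^{\GL(\Lambda)})$, so the point of $\Spf(R_{\GL(\Lambda)})$ attached to a deformation depends on $\Psi$ and not only on the underlying pair. Deformations with isomorphic underlying pairs but unrelated $\Psi$'s are in general non-isomorphic: for a fixed pair, the displays supported on it are the lifts of $\Psi_0$ modulo the (far from transitive) action of pair automorphisms lifting the identity. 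Already over $k[\varepsilon]$ and for $\calG = \GL(\Lambda)$ this fails badly — all pairs deforming $(M_0, M_{0,1})$ are isomorphic as deformations, while $\Def_{\GL(\Lambda)}(k[\varepsilon])$ is the full Grassmannian tangent space — so the ``rigidity statement'' to which you reduce surjectivity is false. For the same reason \Cref{cor:g-structure-unique} cannot be used to compare $\xi$ with $f(x_{\xi})$: that corollary compares two $\calG$-structures on one and the same display $(M, M_1, \Psi)$, whereas $\xi$ carries its own $\Psi'$ and $f(x_{\xi})$ carries the pullback of $\Psi^{\calG}$, which are a priori unrelated. The route you mention only in passing (``this can also be seen more geometrically\ldots'') is not an alternative but the only working mechanism, and it is exactly the paper's first step: since $\xi$ and $f(x_{\xi})$ share only their underlying pair, \Cref{lem:def-g-connecting-points} interpolates the two $\Psi$'s into a single deformation over $\calO_K \doublesquarebr{t}$ specializing to $f(x_{\xi})$ at $t = 0$ and to $\xi$ at $t = \varpi_K$, and \Cref{lem:def-g-spreading-out} (resting on \Cref{lem:def-g-tangent-space} and the pseudotorsor structure from \Cref{lem:def-g-fiber-products}) propagates membership in $\Spf(R_{\calG})$ from the fiber at $t = 0$ to the whole family, hence to $\xi$.

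Your passage from $\calO_K$-points to general $R$ also deviates from what works. The paper does not show that $\calO_K$-specializations ``separate elements of $\Def_{\calG}(R)$'' by a further spreading-out induction; instead it first constructs a retraction $r \colon \Def_{\calG} \to \Spf(R_{\calG})$, namely the composite $\Def_{\calG} \to \Def_{\GL(\Lambda)} = \Spf(R_{\GL(\Lambda)})$, which lands in the closed subfunctor $\Spf(R_{\calG})$ because after lifting along \Cref{lem:def-g-lift-reduced-p-torsionfree} to a reduced $p$-torsionfree $R'$ one may embed $R'$ into a product of $\calO_K$'s and test the factorization there, using the already established equality on $\calO_K$-points. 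Only at that stage, where $x$ and $f(r(x))$ have literally the same underlying display (by the very definition of $r$) and can differ at most in their $\calG$-structures, does \Cref{cor:g-structure-unique} legitimately enter, again after reduction to $\calO_K$; this is the correct role of that corollary, not the one you assign it. Finally, a smaller inaccuracy: Assumption \ref{hyp:grothendieck-messing} is not what makes \Cref{thm:g-mu-grothendieck-messing} (and hence \Cref{lem:def-g-fiber-products} and \Cref{lem:def-g-tangent-space}) available — that theorem holds unconditionally under Notation \ref{not:g-mu-grothendieck-messing}. The assumption is needed earlier, in Construction \ref{con:universal-deformation-g-mu-2}, to know that the comparison isomorphism composed with $\Psi_0$ is an isomorphism of $\calG$-torsors, so that the candidate $\Psi^{\calG}$ can be chosen at all.
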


\begin{proof}
    We first show that
    \[
        \Spf(R_{\calG})(\calO_K) = \Def_{\calG}(\calO_K).
    \]
    Given $(M', M'_1, \Psi') \in \Def_{\calG}(\calO_K)$ we can choose a trivialization $M' \to \Lambda_{\What(\calO_K)}$ lifting the identity on $M_0$ and then find a (unique) deformation
    \[
        (M'', M''_1, \Psi'') \in \Spf(R_{\calG})(\calO_K)
    \]
    such that the composition $M' \to \Lambda_{\What(\calO_K)} \to M''$ defines an isomorphism of Dieudonné $(\calG, \bmmu)$-pairs over $\calO_K$
    \[
        (M', M'_1) \to (M'', M''_1).
    \]
    Applying \Cref{lem:def-g-connecting-points} we obtain a deformation
    \[
        (M, M_1, \Psi) \in \Def_{\calG} \roundbr[\big]{\calO_K \doublesquarebr{t}}
    \]
    that specializes to $(M', M'_1, \Psi')$ and $(M'', M''_1, \Psi'')$ under $t \mapsto \varpi_K$ and $t \mapsto 0$ respectively.
    By \Cref{lem:def-g-spreading-out} we then have $(M, M_1, \Psi) \in \Spf(R_{\calG})(\calO_K \doublesquarebr{t})$, so that in particular
    \[
        (M', M'_1, \Psi') \in \Spf(R_{\calG})(\calO_K)
    \]
    as desired.

    Next we claim that the morphism
    \[
        \Def_{\calG} \to \Def_{\GL(\Lambda)} = \Spf(R_{\GL(\Lambda)})
    \]
    has image inside $\Spf(R_{\calG})$ so that we obtain a retraction $r \colon \Def_{\calG} \to \Spf(R_{\calG})$ of the inclusion.
    By \Cref{lem:def-g-lift-reduced-p-torsionfree} it suffices to show that the map
    \[
        \Def_{\calG}(R) \to \Spf(R_{\GL(\Lambda)})(R)
    \]
    has image inside $\Spf(R_{\calG})(R)$ for $R$ reduced and $p$-torsionfree.
    By noting that such $R$ always injects into a product of $\calO_K$'s we are further reduced to the case $R = \calO_K$ where we have already established the claim.

    Now let $x = (M, M_1, \Psi) \in \Def_{\calG}(R)$.
    Then the point $r(x) \in \Spf(R_{\calG})(R)$ gives rise to a second $\calG$-structure on the finite free $\What(R)$-module $M$ that also makes $(M, M_1, \Psi)$ into a Dieudonné $(\calG, \bmmu)$-display that is a deformation of $(M_0, M_{0, 1}, \Psi_0)$.
    We have to show that $r(x) = x$, which is the same as showing that both $\calG$-structures agree.
    \Cref{lem:def-g-lift-reduced-p-torsionfree} once more allows us to reduce to the case $R = \calO_K$ where this follows from \Cref{cor:g-structure-unique}.
\end{proof}

\begin{remark} \label{rmk:locally-universal}
    Let us recall the notions of \emph{sections rigid in the first order} and \emph{locally universal Dieudonné $(\calG, \bmmu)$-displays} given by Pappas in \cite[Definitions 4.5.8 and 4.5.10]{pappas}.
    \begin{itemize}
        \item
        Let $(M, M_1, \Psi) \in \Def_{\calG}(R)$, set $\fraka_R \coloneqq \frakm_R^2 + \frakm_{\Ebrev} R \subseteq R$ and equip $\frakm_R/\fraka_R \subseteq R/\fraka_R$ with the trivial divided powers.
        A trivialization $M \cong \Lambda_{\What(R)}$ (compatible with the given trivialization of $M_0$) is called \emph{rigid in the first order} if the diagram
        \[
        \begin{tikzcd}
            \What(R/\fraka_R) \otimes_{\What(R)} \widetilde{M_1} \ar[r, "\Psi"] \ar[d]
            & \What(R/\fraka_R) \otimes_{\What(R)} M \ar[d]
            \\
            \roundbr[\big]{\widetilde{M_{0, 1}}}_{\What(R/\fraka_R)} \ar[r, "\Psi_0"]
            & M_{0, \What(R/\fraka_R)}
        \end{tikzcd}
        \]
        is commutative.
        Here the vertical isomorphisms are induced by the trivialization and \Cref{lem:m-1-grothendieck-messing} (and both depend on the choice of trivialization).

        Trivializations that are rigid in the first order always exist (the standard trivialization of $M^{\calG}$ is rigid in the first order essentially by definition of $\Psi^{\calG}$ and this is the universal case).
        
        \item
        A deformation $(M, M_1, \Psi) \in \Def_{\calG}(R)$ is called \emph{locally universal} if there exists a trivialization $M \cong \What(R) \otimes_{\Zp} \Lambda$ that is rigid of the first order and such that the morphism $R_{\calG} \to R$ induced by the filtration $M_1$ is an isomorphism.
    \end{itemize}
    From Construction \ref{con:universal-deformation-g-mu} and \Cref{thm:universal-deformation-g-mu-display} it follows that a given deformation $(M, M_1, \Psi) \in \Def_{\calG}(R)$ is locally universal if and only if it is a universal deformation, i.e.\ if the corresponding morphism $\Spf(R) \to \Def_{\calG}$ is an isomorphism.
\end{remark}

\subsection{Comparison to restricted local shtukas} \hfill\\ \label{subsec:comp-local-shtukas}
In this subsection we compare our ($(m, n)$-truncated) $(\calG, \mu)$-displays with the ($(m, n)$-restricted) local shtukas studied by Xiao and Zhu in \cite[Section 5]{xiao-zhu} and by Shen, Yu and Zhang in \cite[Section 4]{shen-yu-zhang}.

\begin{notation}
    Write $\Fl = \bbL G/\bbL^+ \calG$ for the \emph{Witt vector affine flag variety} for $\calG$.
    We have the \emph{$\bmmu$-admissible locus} $\calA \subseteq \Fl_{\Fq}$ that is the union of those $\bbL^+ \calG$-orbits corresponding to elements in the (finite) \emph{$\bmmu$-admissible set}
    \[
        \Adm(\bmmu)_{\calG} \subseteq \calG(\Zpbrev) \backslash G(\Qpbrev) / \calG(\Zpbrev)
    \]
    (see \cite[Definition 3.11]{anschuetz-gleason-lourenco-richarz} and \cite{he-rapoport} for more details).
    Recall that the $\bbL^+ \calG$-action on $\calA$ factors through $\calG_{\Fp}^{\pf}$ and that the perfection of the special fiber of $\rmM^{\loc}$ naturally identifies with $\calA$.
    We now also set
    \[
        \calA^{(\infty)} \coloneqq \calA \times_{\Fl_{\Fq}} (\bbL G)_{\Fq}
    \]
    and equip $\calA^{(\infty)}$ with the action
    \[
        \roundbr[\big]{\bbL^+ \calG \times \bbL^+ \calG} \times \calA^{(\infty)} \to \calA^{(\infty)}, \qquad
        \roundbr[\big]{(k_1, k_2), g} \mapsto k_2 \cdot g \cdot \sigma^{-1}(k_1)^{-1}.
    \]
    This action makes $\calA^{(\infty)} \to \calA$ into an $L^+ \calG$-equivariant $L^+ \calG$-torsor.
    We write
    \[
        \calA^{(n)} \to \calA
    \]
    for its reduction to an $\bbL^{(n)} \calG$-torsor.
    The $\bbL^+ \calG$-equivariant structure on $\calA^{(n)} \to \calA$ then factors through $\bbL^{(m)} \calG$ by the argument in \cite[Lemma 4.2.2]{shen-yu-zhang}.
\end{notation}

With this setup, we can now give the definition of an ($(m, n)$-restricted) local shtuka.

\begin{definition}[Xiao-Zhu, Shen-Yu-Zhang]
    We define the \emph{stack of local shtukas for $(\calG, \bmmu)$} as the quotient stack
    \[
        \Sht^{\loc}_{\calG, \mu} \coloneqq \squarebr[\big]{(\bbL^+ \calG)_{\Delta} \backslash \calA_{\calG, \bmmu}^{(\infty)}}
    \]
    (where the subscript $\Delta$ indicates that we take the quotient by the diagonal action as usual).
    Similarly we also define the \emph{stack of $(m, n)$-restricted local shtukas for $(\calG, \mu)$} as
    \[
        \Sht^{\loc, (m, n)}_{\calG, \mu} \coloneqq \squarebr[\big]{(\bbL^{(m)} \calG)_{\Delta} \backslash \calA_{\calG, \bmmu}^{(n)}}.
    \]
\end{definition}

\begin{remark}
    We note that our normalization differs from the one in \cite{shen-yu-zhang}.
    This change actually also gives a slightly different notion of restricted shtukas.
\end{remark}

\begin{proposition} \label{prop:comparison-local-shtukas}
    There is a natural $(\bbL^+ \calG \times \bbL^+ \calG)$-equivariant isomorphism
    \[
        \calA^{(\infty)} \to \roundbr[\big]{\rmM^{\loc, (\infty)}}_{\Fq}^{\pf}
    \]
    that is compatible with the identification $\calA = (\rmM^{\loc})_{\Fq}^{\pf}$.
    Consequently we obtain equivalences
    \[
        \roundbr[\big]{\Disp_{\calG, \mu}}_{\Fq}^{\pf} \cong \Sht^{\loc}_{\calG, \mu} \quad \text{and} \quad \roundbr[\big]{\Disp_{\calG, \mu}^{(m, n)}}_{\Fq}^{\pf} \cong \Sht^{\loc, (m, n)}_{\calG, \mu}.
    \]
\end{proposition}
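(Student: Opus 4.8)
The plan is to first construct the $(\bbL^+ \calG \times \bbL^+ \calG)$-equivariant isomorphism $\calA^{(\infty)} \to (\rmM^{\loc, (\infty)})_{\Fq}^{\pf}$ and then to deduce the two stacky equivalences formally. For the geometric statement I would argue on the functors of points over perfect $\Fq$-algebras $R$, using that for such $R$ the ring $W(R)$ is $p$-torsionfree with $I_R = pW(R)$, that $\sigma$ is an automorphism of $W(R)$, and that the natural surjection $M_1^{\sigma} \to \widetilde{M_1}$ of \Cref{rmk:m-1-tilde-m-1-sigma} is an isomorphism (also as $\calG$-torsors, by \Cref{prop:g-mu-m-1-tilde}). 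Concretely, by \Cref{rmk:g-mu-pair-pair} a point of $\rmM^{\loc}$ over $R$ is a submodule $M_1 \subseteq \Lambda_{W(R)} =: M$ with $(M, M_1)$ a $(\calG, \bmmu)$-pair, and a point of $(\rmM^{\loc, (\infty)})_{\Fq}^{\pf}(R)$ is such a datum together with a trivialization $t$ of the fibre $\calG$-torsor $\widetilde{M_1}$ (see \Cref{rmk:mloc-infty}). Since $\widetilde{M_1} = M_1^{\sigma}$ and $\widetilde{M_1}[1/p] = M_1^{\sigma}[1/p] = M^{\sigma}[1/p] = \Lambda_{W(R)[1/p]}$ via the canonical isomorphism of \Cref{prop:g-mu-m-1-tilde}, the trivialization $t$ determines after inverting $p$ an element $g_t \in G(W(R)[1/p]) = (\bbL G)(R)$ comparing $t[1/p]$ with this canonical generic trivialization. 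I would define the map by $(M_1, t) \mapsto g_t$.

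Next I would check that this lands in $\calA^{(\infty)} = \calA \times_{\Fl_{\Fq}} (\bbL G)_{\Fq}$, covers the identity on $\calA$, and is bijective. The image of $g_t$ in $\Fl$ is the class of the lattice $M_1^{\sigma} \subseteq M^{\sigma} = \Lambda_{W(R)[1/p]}$; under the recalled identification $\calA = (\rmM^{\loc})_{\Fq}^{\pf}$ (for which the special fibre of the local model is the $\bmmu$-admissible locus) this is precisely the point of $\calA$ attached to the filtration $M_1$, so $g_t$ lies over $\calA$ and the construction covers the identity on the base. Both sides are then, over the admissible locus, the set of $g \in (\bbL G)(R)$ with $g \bbL^+ \calG \in \calA$: from such a $g$ one recovers the lattice $\widetilde{M_1} = g \Lambda_{W(R)}$, hence $M_1 = \sigma^{-1}(g \Lambda_{W(R)})$ (here the invertibility of $\sigma$ on $W(R)$ for perfect $R$ is used), together with the trivialization $t$ read off from $g$. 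This gives a two-sided inverse, so the map is an isomorphism.

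The main work, and the step I expect to be the genuine obstacle, is matching the two $(\bbL^+ \calG \times \bbL^+ \calG)$-actions, and in particular producing the Frobenius twist in the formula $k_2 \cdot g \cdot \sigma^{-1}(k_1)^{-1}$. Under $(M_1, t) \mapsto g_t$ the two commuting $\bbL^+ \calG$-actions correspond to rescaling the trivialization of $M$ — which governs $M^{\sigma}$ and, through $\widetilde{M_1} = M_1^{\sigma}$, the source lattice — and to rescaling $t$; carrying these out on the trivial torsor yields a two-sided multiplication on $g_t$, and the twist by $\sigma^{-1}$ is exactly the shadow, at the level of comparison elements in $\bbL G$, of the Frobenius pullback $(-)^{\sigma}$ entering $\widetilde{M_1} = M_1^{\sigma}$ (this is the display incarnation of the usual $\sigma$-twist in the local shtuka picture, where $\Psi \colon \widetilde{M_1} \to M$ plays the role of the Frobenius). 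Pinning down the precise placement of the two factors, the inverses, and the direction of $\sigma$ against the recalled identification $\calA = (\rmM^{\loc})_{\Fq}^{\pf}$ is the delicate bookkeeping; I would settle it by a direct computation on the trivial torsor and verify along the way compatibility with the reductions $\rmM^{\loc, (n)}$ and $\calA^{(n)}$ of \Cref{rmk:mloc-infty}.

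Finally the stacky consequences follow formally. By \Cref{lem:g-mu-disps-stack} we have $\Disp_{\calG, \bmmu} \cong [(L^+ \calG)_{\Delta} \backslash \rmM^{\loc, (\infty)}]$, and passing to the perfection of the special fibre turns $L^+ \calG$ into $\bbL^+ \calG$ and $\rmM^{\loc, (\infty)}$ into $(\rmM^{\loc, (\infty)})_{\Fq}^{\pf}$; restricting the $(\bbL^+ \calG \times \bbL^+ \calG)$-equivariant isomorphism above to the diagonal gives
\[
    (\Disp_{\calG, \mu})_{\Fq}^{\pf} \cong [(\bbL^+ \calG)_{\Delta} \backslash \calA^{(\infty)}] = \Sht^{\loc}_{\calG, \mu}.
\]
For the truncated statement I would reduce the torsors to $\bbL^{(n)} \calG$-torsors, so that the isomorphism induces $(\rmM^{\loc, (n)})_{\Fq}^{\pf} \cong \calA^{(n)}$, $\bbL^{(m)} \calG$-equivariantly (using that the equivariant structures factor through the appropriate truncated loop groups, see \Cref{rmk:mloc-infty}), and then quotient by $(\bbL^{(m)} \calG)_{\Delta}$ exactly as above, using \Cref{lem:g-mu-disps-stack} to identify $\Disp_{\calG, \bmmu}^{(m, n)} \cong [(L^{(m)} \calG)_{\Delta} \backslash \rmM^{\loc, (n)}]$.
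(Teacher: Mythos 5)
Your overall route is the same as the paper's: describe both sides pointwise over perfect $\Fq$-algebras $R$ (where $I_R = pW(R)$, $\sigma$ is bijective and $\widetilde{M_1} = M_1^{\sigma}$), attach to a trivialization $t$ of $\widetilde{M_1}$ a comparison element of $\bbL G$ via the canonical generic identification $\widetilde{M_1}[1/p] \cong M^{\sigma}[1/p]$, get $\calG$-compatibility from the fact that this element lies in $G(W(R)[1/p])$, and then deduce the two stack equivalences formally from \Cref{lem:g-mu-disps-stack} and \Cref{rmk:mloc-infty}. The $\calG$-structure argument and the final formal step are fine as you present them (the paper does the lattice computation for $(\GL(\Lambda), \bmmu_d)$ first and then restricts to $\calG$, but that is immaterial). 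The problem is that the explicit map you commit to is wrong, and not merely up to the bookkeeping you defer.

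Two normalizations are off, and each one by itself breaks the two properties you need to check. First, a factor of $p$ is missing, and because of it your map does not land in $\calA^{(\infty)}$ at all: the identification $\calA = (\rmM^{\loc})^{\pf}_{\Fq}$ sends $g \cdot \bbL^+\calG$ to the filtration $M_1^g = p g \Lambda_{W(R)}$, so admissible cosets satisfy $\Lambda_{W(R)} \subseteq g\Lambda_{W(R)} \subseteq p^{-1}\Lambda_{W(R)}$, whereas your formulas ``$\widetilde{M_1} = g\Lambda_{W(R)}$'' and ``$M_1 = \sigma^{-1}(g\Lambda_{W(R)})$'' would require $g\Lambda_{W(R)} \subseteq \Lambda_{W(R)}$; equivalently, your $g_t$ has $g_t\Lambda_{W(R)} = \sigma(M_1) \subseteq \Lambda_{W(R)}$, which lies in a locus of $\bbL G$ disjoint from $\calA$. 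Second, even after inserting the $p$, the comparison element must carry a Frobenius: since $\sigma\roundbr[\big]{k_2 g \sigma^{-1}(k_1)^{-1}} = \sigma(k_2)\sigma(g)k_1^{-1}$, the action $((k_1, k_2), g) \mapsto k_2 g \sigma^{-1}(k_1)^{-1}$ matches the natural actions on $(M_1, \xi)$ (post-composition by $k_1$ on $\xi$, functorial action of $k_2$, under which $\widetilde{k_2}$ acts as $\sigma(k_2)$) only if the trivialization attached to $g$ is $p^{-1}\sigma(g)^{-1}$, i.e.\ the map must be $g \mapsto \roundbr[\big]{p g \Lambda_{W(R)},\, p^{-1}\sigma(g)^{-1}}$, which is the paper's formula. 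Your $g_t$, with no $\sigma$ and no $p$, instead intertwines the display-side action with $((k_1, k_2), h) \mapsto \sigma(k_2) h k_1^{-1}$, so the claimed $(\bbL^+\calG \times \bbL^+\calG)$-equivariance fails (it holds only after twisting the group by $\sigma^{-1} \times \sigma^{-1}$) and the map does not cover the identity on the bases; the correct inverse map is $(M_1, t) \mapsto \sigma^{-1}\roundbr[\big]{p^{-1}(\mathrm{can} \circ t^{-1})}$. You do flag the placement of $\sigma$ as bookkeeping to be settled, but the factor of $p$ appears nowhere in your proposal, and it is exactly the normalization the paper is careful about --- compare its remark that its conventions differ from those of Shen--Yu--Zhang and yield a slightly different notion of restricted shtuka.
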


\begin{proof}
    Write $\calA_{\Lambda, d}$, $\calA_{\Lambda, d}^{(\infty)}$, $\rmM^{\loc}_{\Lambda, d}$ and $\rmM^{\loc, (\infty)}_{\Lambda, d}$ for the objects analogous to $\calA$, $\calA^{(\infty)}$, $\rmM^{\loc}$ and $\rmM^{\loc, (\infty)}$ when $(\calG, \bmmu)$ is replaced by $(\GL(\Lambda), \bmmu_d)$.
    For a $p$-complete ring $R$ we then make the identifications
    \[
        \rmM^{\loc}_{\Lambda, d}(R) \cong \set[\big]{M_1 \subseteq \Lambda_{W(R)}}{\text{$(\Lambda_{W(R)}, M_1)$ is a pair of type $(h, d)$}}
    \]
    and
    \[
        \rmM^{\loc, (\infty)}_{\Lambda, d}(R) \cong \set[\big]{(M_1, \xi)}{\text{$M_1 \in \rmM^{\loc}_{\Lambda, d}(R)$, $\xi \colon \widetilde{M_1} \to \Lambda_{W(R)}$ is an isomorphism}}.
    \]
    Then the identification $\calA_{\Lambda, d} = (\rmM^{\loc}_{\Lambda, d})_{\Fp}^{\pf}$ is explicitly given by
    \[
        \calA_{\Lambda, d} \to \roundbr[\big]{\rmM^{\loc}_{\Lambda, d}}_{\Fp}^{\pf}, \qquad g \cdot \bbL^+ \GL(\Lambda) \mapsto M_1^g \coloneqq p g \Lambda_{W(R)} \subseteq \Lambda_{W(R)}
    \]
    and we can define an isomorphism
    \[
        \calA_{\Lambda, d}^{(\infty)} \to \roundbr[\big]{\rmM^{\loc, (\infty)}_{\Lambda, d}}_{\Fp}^{\pf}, \qquad g \mapsto \roundbr[\big]{M_1^g, \; \xi^g \colon \widetilde{M_1^g} \cong p \sigma(g) \Lambda_{W(R)} \xrightarrow{p^{-1} \sigma(g)^{-1}} \Lambda_{W(R)}}
    \]
    that has all the required properties.
    We now claim that this isomorphism restricts to an isomorphism
    \[
        \calA^{(\infty)} \to \roundbr[\big]{\rmM^{\loc, (\infty)}}_{\Fq}^{\pf}.
    \]
    This amounts to showing that for a perfect $\Fq$-algebra $R$ and $g \in \calA^{(\infty)}(R)$ the isomorphism
    \[
        \xi^g \colon \widetilde{M^g_1} \to \Lambda_{W(R)}
    \]
    preserves $\calG$-structures.
    As the $\calG$-structure on $\widetilde{M^g_1}$ is characterized by the identification $\widetilde{M^g_1}[1/p] \cong \Lambda_{W(R)}^{\sigma}[1/p]$ this boils down to noting that $p^{-1} \sigma(g)^{-1} \in G(W(R)[1/p])$.
\end{proof}

\subsection{\texorpdfstring{Comparison to $(\calG, \mu)$-displays in the sense of Bültel-Pappas}{Comparison to (G, mu)-displays in the sense of Bültel-Pappas}} \hfill\\ \label{subsec:comp-bueltel-pappas}
In this subsection we compare our $(\calG, \bmmu)$-displays with the $(\calG, \mu)$-displays defined by Bültel and Pappas in \cite{bueltel-pappas}.

\begin{notation} \label{not:bueltel-pappas}
    Suppose that $\calG$ is reductive and fix a representative $\mu$ of $\bmmu$ that is defined over $\calO_F$ for some finite unramified extension $F/\Qp$.
    Set
    \[
        H_{\mu} \coloneqq P_{\mu^{-1}} \times_{\calG_{\calO_F}} (L^+ \calG)_{\calO_F},
    \]
    where $P_{\mu^{-1}} \subseteq \calG_{\calO_F}$ denotes the parabolic subgroups associated to $\mu^{-1}$ (our normalizations are different from the ones in \cite{bueltel-pappas} as we are normalizing our display functors contravariantly, compare \Cref{thm:p-div-groups-disps}).
    There exists a morphism of $\calO_F$-group schemes
    \[
        \varphi \colon H_{\mu} \to (L^+ \calG)_{\calO_F}
    \]
    that is characterized by the property
    \[
        \varphi(h) = \sigma \roundbr[\big]{\mu(p)^{-1} \cdot h \cdot \mu(p)} \in G \roundbr[\big]{W(R)[1/p]}
    \]
    for all $h \in H_{\mu}(R)$ (see \cite[Proposition 3.1.2]{bueltel-pappas}).

    Write $\Lambda_{\calO_F} = \Lambda_{-1} \oplus \Lambda_0$ for the weight decomposition induced by $\mu$ and define
    \[
        M^{\std}_1 \subseteq \Lambda_{W(\calO_F)}
    \]
    to be the preimage of $\Lambda_{-1}$ under $\Lambda_{W(\calO_F)} \to \Lambda_{\calO_F}$ so that $(\Lambda_{W(\calO_F)}, M^{\std}_1)$ is a $(\calG, \bmmu)$-pair over $\calO_F$.
    Note that $H_{\mu}$ naturally is the automorphism group scheme of $(\Lambda_{W(\calO_F)}, M^{\std}_1)$.

    In this subsection $R$ always denotes a $p$-complete $\calO_F$-algebra.
\end{notation}

Let us recall the definition of a $(\calG, \mu)$-display in the sense of Bültel-Pappas.

\begin{definition}
    A \emph{$(\calG, \mu)$-display over $R$ in the sense of Bültel-Pappas} is a tuple $(\calP, \calQ, \Psi)$ where $\calP$ is an $L^+ \calG$-torsor over $R$, $\calQ$ is a choice of reduction of $\calP$ to an $H_{\mu}$-torsor and
    \[
        \Psi \colon L^+ \calG \times^{\varphi, H_{\mu}} \calQ \to \calP
    \]
    is an isomorphism of $L^+ \calG$-torsors over $R$.
\end{definition}

\begin{proposition}
    There is a natural equivalence
    \[
    \begin{array}{r c l}
        \Pair_{\calG, \bmmu, \calO_F}
        &\to
        &\curlybr[\big]{\text{$H_{\mu}$-torsors}}, \\[2mm]
        (M, M_1)
        &\mapsto
        &\iIsom \roundbr[\big]{(M, M_1), (\Lambda_{W(\calO_F)}, M^{\std}_1)}.
    \end{array}
    \]
    Under this equivalence the construction $(M, M_1) \mapsto \widetilde{M_1}$ naturally identifies with $\calQ \mapsto L^+ \calG \times^{\varphi, H_{\mu}} \calQ$.
    Consequently we obtain a natural equivalence
    \[
        \Disp_{\calG, \bmmu, \calO_F} \cong \curlybr[\big]{\text{$(\calG, \mu)$-displays in the sense of Bültel-Pappas}}.
    \]
\end{proposition}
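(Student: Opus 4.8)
The plan is to prove the first displayed equivalence (pairs versus $H_\mu$-torsors), then to check that the functor $(M, M_1) \mapsto \widetilde{M_1}$ becomes the extension of structure group along $\varphi$, and finally to read off the statement about displays by adjoining the datum $\Psi$.

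For the first step I would use the presentation $\Pair_{\calG, \bmmu} \cong [L^+ \calG \backslash \rmM^{\loc}]$ from \Cref{lem:g-mu-pairs-stack}. Since $\calG$ is reductive, in this good-reduction situation the local model is the homogeneous space $\rmM^{\loc} \cong (\calG_{\calO_F}/P_{\mu^{-1}})^{\wedge}_p$, and the $L^+ \calG$-action factors through the reduction $L^+ \calG \to \calG_{\calO_F}$, i.e.\ through $\calG(W(R)) \to \calG(R)$. The stabilizer of the base point of $\rmM^{\loc}$ (the one corresponding to $M^{\std}_1$) is exactly $H_\mu$, and as both $L^+ \calG \to \calG_{\calO_F}$ and $\calG_{\calO_F} \to \calG_{\calO_F}/P_{\mu^{-1}}$ are fppf surjections the orbit map identifies $\rmM^{\loc} \cong (L^+ \calG)/H_\mu$ as fppf sheaves. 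Hence $\Pair_{\calG, \bmmu, \calO_F} \cong [L^+ \calG \backslash ((L^+ \calG)/H_\mu)]$, and unwinding the canonical equivalence $[\mathbb{G} \backslash (\mathbb{G}/\mathbb{H})] \cong B\mathbb{H}$ shows that it is given by $(M, M_1) \mapsto \iIsom((M, M_1), (\Lambda_{W(\calO_F)}, M^{\std}_1))$, using that $H_\mu$ is the automorphism group scheme of the standard pair (\Cref{not:bueltel-pappas}); the trivial $H_\mu$-torsor corresponds to the standard pair.

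For the second step, both $(M, M_1) \mapsto \widetilde{M_1}$ and $\calQ \mapsto L^+ \calG \times^{\varphi, H_\mu} \calQ$ are morphisms of classifying stacks $B H_\mu \to B L^+ \calG$, so it suffices to compare their effect on the base object, where the $\varphi$-extension induces $\varphi$ on automorphisms. To compute $\widetilde{(-)}$ on the standard pair I would take the normal decomposition $(L, T) = (\Lambda_{-1, W(\calO_F)}, \Lambda_{0, W(\calO_F)})$, so that $M^{\std}_1 = L \oplus I_{\calO_F} T$ and $\widetilde{M^{\std}_1} = L^\sigma \oplus T^\sigma$. By \Cref{prop:m-1-tilde} an automorphism $h = \begin{psmallmatrix} a & b \\ c & d \end{psmallmatrix} \in H_\mu$ is sent to $\widetilde{h} = \begin{psmallmatrix} a^\sigma & p b^\sigma \\ \dot{c} & d^\sigma \end{psmallmatrix}$, where $p \dot{c} = c^\sigma$. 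On the other hand, since $\mu(p)$ acts by $p^{-1}$ on $\Lambda_{-1}$ and trivially on $\Lambda_0$, the element $p\mu(p) = \mathrm{diag}(1, p)$ computes the canonical isomorphism $\widetilde{M^{\std}_1}[1/p] \to (M^{\std})^\sigma[1/p]$, $l + t \mapsto l + pt$. A direct block computation then gives $\sigma(\mu(p)^{-1} h \mu(p)) = \begin{psmallmatrix} a^\sigma & p b^\sigma \\ p^{-1} c^\sigma & d^\sigma \end{psmallmatrix}$, which equals $\widetilde{h}$ precisely because $\dot{c} = p^{-1} c^\sigma$. Thus $\widetilde{h} = \varphi(h)$ under the tautological trivialization of $\widetilde{M^{\std}_1}$ as a module, matching the homomorphism induced by the $\varphi$-extension.

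The main obstacle is to make precise that $\widetilde{M^{\std}_1}$ is the \emph{trivial} $L^+ \calG$-torsor, with the module identity serving as a trivialization of $\calG$-torsors (equivalently, that the induced map on automorphisms is genuinely $\varphi$ and not a twist of it). This is the content-bearing point, and it is exactly the group-theoretic shadow of the nontrivial assertion of \cite[Proposition 3.1.2]{bueltel-pappas} that $\varphi$ lands in $L^+ \calG$, i.e.\ that $\sigma(\mu(p)^{-1} h \mu(p)) \in \calG(W(R))$ even though $\mu(p) \notin \calG(W(R))$: this reflects the fact that $\widetilde{M^{\std}_1}$ carries a genuine $\calG$-structure over $W(\calO_F)$ and not merely over $W(\calO_F)[1/p]$, which is guaranteed by the uniqueness in \Cref{prop:g-mu-m-1-tilde} and the purity argument in \Cref{lem:g-mu-m-1-tilde-key-lemma}. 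Granting this, the two functors $B H_\mu \to B L^+ \calG$ agree, and combining both steps yields the display equivalence: a $(\calG, \bmmu)$-display $(M, M_1, \Psi)$ corresponds to the triple $(\calP, \calQ, \Psi)$ with $\calP = M$ as an $L^+ \calG$-torsor, $\calQ = \iIsom((M, M_1), (\Lambda_{W(\calO_F)}, M^{\std}_1))$, and $\Psi \colon L^+ \calG \times^{\varphi, H_\mu} \calQ = \widetilde{M_1} \to M = \calP$, which is exactly a $(\calG, \mu)$-display in the sense of Bültel--Pappas; naturality in $R$ is clear from the constructions.
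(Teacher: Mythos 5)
Your proposal is correct and follows essentially the same route as the paper's proof: the first equivalence comes from transitivity of the $\calG$-action on $\rmM^{\loc}$ (your orbit--stabilizer argument via $[L^+\calG \backslash \rmM^{\loc}]$ is just a more explicit phrasing of this), and the identification of $(M, M_1) \mapsto \widetilde{M_1}$ with extension along $\varphi$ is checked on the standard pair, where your tautological trivialization of $\widetilde{M^{\std}_1} = L^{\sigma} \oplus T^{\sigma}$ coincides with the paper's trivialization by $p \cdot \sigma(\mu(p))$ (note the element is $p\,\sigma(\mu(p))$, not $p\,\mu(p)$, since the decomposition $L^{\sigma} \oplus T^{\sigma}$ is the $\sigma$-twisted one, which your blocks $a^{\sigma}, b^{\sigma}, \dot{c}, d^{\sigma}$ already use consistently). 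Your block-matrix verification that $\widetilde{h} = \sigma(\mu(p)^{-1} h \mu(p)) = \varphi(h)$, and your observation that this trivialization is one of $\calG$-torsors because $p\,\sigma(\mu(p)) \in G(W(\calO_F)[1/p])$ and the $\calG$-structure on $\widetilde{M^{\std}_1}$ is characterized by its generic fiber, spell out exactly what the paper compresses into ``the claim then follows formally.''
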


\begin{proof}
    The first claim is immediate because $\calG$ acts transitively on $\rmM^{\loc}$ so that every $(\calG, \bmmu)$-pair is locally isomorphic to $(\Lambda_{W(\calO_F)}, M^{\std}_1)$.

    For the second claim we note that the element $p \cdot \sigma(\mu(p)) \in G (W(\calO_F)[1/p])$ induces an isomorphism $\Lambda_{W(\calO_F)} \to \widetilde{M^{\std}_1}$.
    If we denote the inverse of this isomorphism by $\xi$ then $\varphi$ can be described as the morphism
    \[
        h \mapsto \roundbr[\big]{\Lambda_{W(R)} \xrightarrow{\xi^{-1}} \roundbr[\big]{\widetilde{M^{\std}_1}}_{W(R)} \xrightarrow{\widetilde{h}} \roundbr[\big]{\widetilde{M^{\std}_1}}_{W(R)} \xrightarrow{\xi} \Lambda_{W(R)}}.
    \]
    The claim then follows formally from this.
\end{proof}

\subsection{Comparison to the theory of zips} \hfill\\ \label{subsec:comp-zips}
In this subsection we compare our $(\calG, \bmmu)$-displays with certain zips as considered by Shen, Yu and Zhang in \cite[Section 3]{shen-yu-zhang}.

\begin{notation}
    Throughout this subsection $R$ always denotes on $\Fpbar$-algebra.
\end{notation}

To conveniently formulate the material in this section, we use the following ad hoc definition of a zip datum that is essentially the same as in \cite[Definition 3.1]{pink-wedhorn-ziegler-zip-data} and \cite[Definition 3.1]{pink-wedhorn-ziegler-zips}, but slightly more general.

\begin{definition}
    A \emph{zip datum} is a tuple $\calZ = (H, P_+, P_-, L_-, \varphi)$ where $H$ is an affine group scheme over $\Fpbar$, $P_+, P_- \subseteq H$ are subgroup schemes, $L_-$ is a quotient group scheme of $P_-$ and $\varphi \colon P_+ \to L_-$ is a morphism of group schemes.

    Let $\calZ$ be a zip datum as above.
    A \emph{zip for $\calZ$ over $R$} is a tuple $(I, I_+, I_-, \Psi)$ where $I$ is an $H$-torsor over $R$, $I_+$ (resp.\ $I_-$) is a reduction of $I$ to a $P_+$-torsor (resp.\ a $P_-$-torsor) and
    \[
        \Psi \colon L_- \times^{\varphi, P_+} I_+ \to L_- \times^{P_-} I_-
    \]
    is an isomorphism of $L_-$-torsors.
\end{definition}

\begin{proposition}
    Let $\calZ = (H, P_+, P_-, L_-, \varphi)$ be a zip datum and let
    \[
        E_{\calZ} \coloneqq P_+ \times_{\varphi, L_-} P_-
    \]
    be the associated zip group.

    The groupoids of zips for $\calZ$, for varying $\Fpbar$-algebras, form a stack that is naturally equivalent to the quotient stack $[E_{\calZ} \backslash H]$ for the action
    \[
        E_{\calZ} \times H \to H, \qquad \roundbr[\big]{(p_+, p_-), h} \mapsto p_+ h p_-^{-1}.
    \]
\end{proposition}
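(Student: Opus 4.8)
The plan is to realize the stack of zips for $\calZ$ as a quotient stack by exhibiting the groupoid fibered over $\Fpbar$-algebras as the stackification of the prestack whose objects over $R$ are zips $(I, I_+, I_-, \Psi)$ and then identifying this with $[E_{\calZ} \backslash H]$. First I would observe that the trivial $H$-torsor $I = H$ admits the tautological reductions $I_+ = P_+$ and $I_- = P_-$, and that for the identity isomorphism $\Psi = \id$ we obtain a distinguished zip $(H, P_+, P_-, \id)$ over $\Fpbar$. This will serve as the universal object defining the map to the quotient stack; the content is to compute its automorphism group and to check that every zip is fppf-locally isomorphic to it.

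The key step is the automorphism computation. Given the standard zip $(H, P_+, P_-, \id)$ over $R$, an automorphism is a pair consisting of an automorphism $h \in H(R)$ of the $H$-torsor $H$ (acting by right multiplication, so identified with $H(R)$) that preserves both reductions $P_+$ and $P_-$ and is compatible with $\Psi = \id$. Preserving the $P_+$-reduction forces the component to lie in $P_+$, preserving the $P_-$-reduction forces it to lie in $P_-$, and the compatibility with $\Psi$ under $\varphi$ forces the two to match after applying $\varphi$. Concretely, an automorphism is a pair $(p_+, p_-) \in P_+(R) \times P_-(R)$ with $\varphi(p_+) = \overline{p_-}$ in $L_-(R)$, where $\overline{p_-}$ denotes the image of $p_-$ under the quotient $P_- \to L_-$; this is exactly $E_{\calZ}(R)$ by the definition $E_{\calZ} = P_+ \times_{\varphi, L_-} P_-$. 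This identifies the automorphism group scheme of the standard zip with $E_{\calZ}$, and tracing through how such a pair acts on the trivial $H$-torsor gives precisely the action $(p_+, p_-) \cdot h = p_+ h p_-^{-1}$ asserted in the statement (the sign conventions being dictated by whether $P_+$ acts on the source or target of $\Psi$).

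For the local triviality I would argue that, over an $\Fpbar$-algebra $R$, any zip $(I, I_+, I_-, \Psi)$ becomes isomorphic to the standard one after an fppf (indeed, since $P_+, P_-, H$ are smooth in the cases of interest, étale) cover trivializing all three torsors compatibly: trivialize $I$, then $I_+$ and $I_-$ as reductions, at which point $\Psi$ becomes an element of $L_-(R)$ that can be absorbed by a further change of trivialization. This shows the morphism $[E_{\calZ} \backslash H] \to \{\text{zips for } \calZ\}$ induced by the standard object is an equivalence of stacks, since it induces isomorphisms on automorphism sheaves and is essentially surjective after a cover. I expect the main obstacle to be bookkeeping the variance and the left-versus-right conventions so that the action comes out as $p_+ h p_-^{-1}$ rather than its inverse or transpose; getting $\Psi$, which is an isomorphism $L_- \times^{\varphi, P_+} I_+ \to L_- \times^{P_-} I_-$ of $L_-$-torsors, to interact correctly with both reductions under the pushout along $\varphi$ is the delicate point, but it is purely formal once the conventions are fixed.
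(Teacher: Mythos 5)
Your argument has a fatal gap: the local-triviality step fails, and with it the whole structure of the proof. The claim that every zip is fppf-locally isomorphic to the single standard zip $(H, P_+, P_-, \mathrm{id})$ is false. Fix a zip $(I, I_+, I_-, \Psi)$ and trivialize $I_+$ and $I_-$ over a cover; the two resulting trivializations of $I$ differ by an element $h \in H$, and the double coset $P_+ h P_-$ (indeed the $E_{\calZ}$-orbit of $h$) is independent of all choices and does not change under further refinement of the cover. So a zip whose reductions sit in relative position $h \notin P_+ P_-$ is \emph{not} locally isomorphic to the standard one, and such $h$ exist whenever $P_+ P_- \neq H$. This is not a technicality: if your argument were correct, the stack of zips would be the classifying stack $[\ast / E_{\calZ}]$, with a single point; but $[E_{\calZ} \backslash H]$ has one point per $E_{\calZ}$-orbit on $H$, and the existence of several orbits is exactly what makes zips useful for producing stratifications. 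Relatedly, you never construct a family of zips parametrized by $H$, so there is in fact no morphism $[E_{\calZ} \backslash H] \to \{\text{zips for } \calZ\}$ anywhere in your proposal --- only a morphism from $[\ast / \mathrm{Aut}(\text{standard zip})]$.

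The automorphism computation contains the matching error. As your own first sentence says, an automorphism of $(H, P_+, P_-, \mathrm{id})$ is a \emph{single} element $g \in H(R)$ (an automorphism of the trivial torsor) preserving both reductions and compatible with $\Psi$; this forces $g \in P_+(R) \cap P_-(R)$ with $\varphi(g) = \overline{g}$ in $L_-(R)$. That group is the stabilizer of $1 \in H$ in $E_{\calZ}$, i.e.\ the pairs of the form $(g, g)$, which is in general much smaller than $E_{\calZ}$ (for instance $E_{\calZ}$ contains $\ker(\varphi) \times \ker(P_- \to L_-)$). The fibre product $E_{\calZ}$ of \emph{pairs} instead arises as the group of changes of framings: $H$ parametrizes zips equipped with trivializations of $I_+$ and of $I_-$ under which $\Psi$ becomes the identity of $L_-$, and a change of the two trivializations by $(p_+, p_-) \in P_+ \times P_-$ preserves this normalization of $\Psi$ precisely when $\varphi(p_+) = \overline{p_-}$, in which case it moves the relative position by $h \mapsto p_+ h p_-^{-1}$. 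The correct argument --- it is the proof of \cite[Proposition 3.11]{pink-wedhorn-ziegler-zips}, which is all the paper itself invokes here --- therefore runs as follows: attach to every $h \in H(R)$ a standard zip $Z_h$; check that isomorphisms $Z_h \to Z_{h'}$ biject with pairs $(p_+, p_-) \in E_{\calZ}(R)$ satisfying $p_+ h p_-^{-1} = h'$ (in particular $\mathrm{Aut}(Z_h)$ is the stabilizer of $h$, consistent with the computation above); and show that fppf-locally every zip admits such a normalized pair of trivializations (here one uses that $P_- \to L_-$ admits sections fppf-locally), hence is locally isomorphic to some $Z_h$. Together these exhibit the stack of zips as $[E_{\calZ} \backslash H]$.
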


\begin{proof}
    See \cite[Proposition 3.11]{pink-wedhorn-ziegler-zips}.
\end{proof}

\begin{example} \label{ex:zip-datum}
    Let $H_0$ be a connected reductive group over $\Fp$ and let
    \[
        \mu \colon \bfG_{m, \Fpbar} \to H_{0, \Fpbar}
    \]
    be a minuscule cocharacter.
    Then we have an associated zip datum $(H, P_+, P_-, L_-, \varphi)$ that is given as follows.
    \begin{itemize}
        \item
        $H \coloneqq G_{\Fpbar}$.

        \item
        $P_+ \coloneqq P_{\mu^{-1}} \subseteq H$.

        \item
        $P_- \coloneqq P_{\mu}^{\sigma} \subseteq H$.

        \item
        $L_- = (P_-)^{\rdt}$ is the reductive quotient of $P_-$.
        Note that $\mu$ gives a Levi decomposition $P_{\mu} = U_{\mu} \rtimes L_{\mu}$ so that we can identify $L_-$ with $L_{\mu}^{\sigma}$ and thus also with $(P_+)^{\rdt, \sigma}$.

        \item
        $\varphi \colon P_+ \to L_-$ is the composition
        \[
            \varphi \colon P_+ \to (P_+)^{\rdt} \xrightarrow{\sigma} (P_+)^{\rdt, \sigma} \cong L_-.
        \]
    \end{itemize}
    Zips for this zip datum are then the same as zips for $(H_0, \mu^{-1})$ as in \cite{pink-wedhorn-ziegler-zips} (see also \cite[Subsection 1.2]{zhang}).
\end{example}

Recall that $\rmM^{\loc}_{\Fpbar}$ decomposes into finitely many $\calG$-orbits that are called Kottwitz-Rapoport (KR) strata and are enumerated by the set $\Adm(\mu)_{\calG}$ (see \cite{he-rapoport}).
Let us fix $w \in \Adm(\mu)_{\calG}$ and denote the corresponding KR-stratum by $\rmM^{\loc, w} \subseteq \rmM^{\loc}_{\Fpbar}$.

\begin{definition}
    We say that a $(\calG, \mu)$-pair $(\calP, q)$ over $R$ is \emph{of type $w$} if the morphism $q \colon \calP \to \rmM^{\loc}_{\Fpbar}$ has image inside $\rmM^{\loc, w}$ and write $\Pair_{\calG, \mu, \Fpbar}^w \subseteq \Pair_{\calG, \mu, \Fpbar}$ for the substack of $(\calG, \mu)$-pairs of type $w$.

    We make the same definition for $(\calG, \mu)$-displays and the truncated variants.
\end{definition}

Let us now also fix $M_1^{\std} \subseteq \Lambda_{\Zpbrev}$, making $(\Lambda_{\Zpbrev}, M_1^{\std})$ into a $(\calG, \bmmu)$-pair of type $w$ over $\Fpbar$, and an isomorphism $\xi \colon \widetilde{M_1^{\std}} \to \Lambda_{\Zpbrev}$.
Note that the tuple $(M_1^{\std}, \xi)$ defines a point $w \in \rmM^{\loc, (\infty)}(\Fpbar) \subseteq G(\Qpbrev)$ (see \Cref{subsec:comp-local-shtukas}).
We then have the following data.

\begin{itemize}
    \item
    Write $H_{w, +} \subseteq (L^+ \calG)_{\Fpbar}$ for the stabilizer of $w \in \rmM^{\loc}(\Fpbar)$.
    Note that $H_{w, +}$ naturally is the automorphism group scheme of the $(\calG, \bmmu)$-pair $(\Lambda_{\Zpbrev}, M_1^{\std})$.
    We also write $H_{w, +}^{(m)} \subseteq (L^{(m)} \calG)_{\Fpbar}$ for the image of $H_{w, +}$.

    \item
    Write $H_{w, -} \subseteq (L^+ \calG)_{\Fpbar}$ for the stabilizer of the \emph{conjugate filtration}, i.e.\ the image of the composition
    \[
        \Lambda_{\Zpbrev} \to \widetilde{M_1^{\std}} \xrightarrow{\xi} \Lambda_{\Zpbrev} \to \Lambda_{\Fpbar}.
    \]
    Here the first map is the one from \Cref{rmk:pairs-inclusions} and the action of $(L^+ \calG)_{\Fpbar}$ on $\Lambda_{\Fpbar}$ is the usual one through the quotient $\calG_{\Fpbar}$.
    We also write $H_{w, -}^{(n)} \subseteq (L^{(n)} \calG)_{\Fpbar}$ for the image of $H_{w, -}$ and $H_{w, -}^{(m, n)} \subseteq (L^{(m)} \calG)_{\Fpbar}$ for the preimage of $H_{w, -}^{(n)}$.
    This last bit of notation is a bit awkward as we always have $H_{w, -}^{(m, n)} = H_{w, -}^{(m)}$ when $n \neq \onerdt$.

    \item
    We now consider the morphism
    \[
    \begin{array}{r c l}
        \varphi \colon H_{w, +}
        & \to
        & (L^+ \calG)_{\Fpbar},
        \\
        h
        & \mapsto
        & \roundbr[\Big]{\Lambda_{W(R)} \xrightarrow{\xi^{-1}} \roundbr[\big]{\widetilde{M_1^{\std}}}_{W(R)} \xrightarrow{\widetilde{h}} \roundbr[\big]{\widetilde{M_1^{\std}}}_{W(R)} \xrightarrow{\xi} \Lambda_{W(R)}}.
    \end{array}
    \]
    One can characterize $\varphi$ by the property
    \[
        \varphi(h) = \sigma(w^{-1} h w) \in G \roundbr[\big]{W(R)[1/p]}
    \]
    for all $h \in H_{w, +}(R)$.
    
    Also note that in fact $\varphi$ has image inside $H_{w, -}$.
    Indeed, given $h \in H_{w, +}(R)$ the automorphism $\widetilde{h}$ will always stabilize the image of $\Lambda_{W(R)} \to (\widetilde{M_1^{\std}})_{W(R)}$.
    In the following we will always think of $\varphi$ as a morphism $\varphi \colon H_{w, +} \to H_{w, -}$.

    The morphism $\varphi$ clearly induces morphisms $H_{w, +}^{(m)} \to H_{w, -}^{(n)}$ that we again denote by $\varphi$.
\end{itemize}

At this point we have constructed zip data
\[
    \roundbr[\big]{(L^+ \calG)_{\Fpbar}, H_{w, +}, H_{w, -}, H_{w, -}, \varphi} \quad \text{and} \quad \roundbr[\big]{(L^{(m)} \calG)_{\Fpbar}, H_{w, +}^{(m)}, H_{w, -}^{(m, n)}, H_{w, -}^{(n)}, \varphi}.
\]

\begin{remark} \label{rmk:zip-reductive}
    When $\calG$ is reductive there is only one KR-stratum and we can choose $w = \mu(p)$ for some representative of $\mu$ defined over $\calO_F$ for some finite unramified extension $F/\Qp$.
    Then $H_{w, +}$ and $\varphi$ as defined here are precisely the base changes to $\Fpbar$ of the objects $H_{\mu}$ and $\varphi$ from Notation \ref{not:bueltel-pappas}.
\end{remark}

\begin{proposition}
    There is a natural equivalence
    \[
        \Pair_{\calG, \mu, \Fpbar}^w \cong \curlybr[\big]{\text{$H_{w, +}$-torsors}}, \qquad (M, M_1) \mapsto \iIsom \roundbr[\big]{(M, M_1), (\Lambda_{\Zpbrev}, M_1^{\std})_R}.
    \]
    Under this equivalence the construction $(M, M_1) \mapsto \widetilde{M_1}$ naturally identifies with $\calQ \mapsto (L^+ \calG)_{\Fpbar} \times^{\varphi, H_{w, +}} \calQ$.
    Consequently we obtain a natural equivalence
    \[
        \Disp_{\calG, \mu, \Fpbar}^w \cong \curlybr[\big]{\text{zips for $\roundbr[\big]{(L^+ \calG)_{\Fpbar}, H_{w, +}, H_{w, -}, H_{w, -}, \varphi}$}}.
    \]

    Similarly we also have natural equivalences
    \[
        \Pair_{\calG, \mu, \Fpbar}^{(m), w} \cong \curlybr[\big]{\text{$H_{w, +}^{(m)}$-torsors}}
    \]
    and
    \[
        \Disp_{\calG, \mu, \Fpbar}^{(m, n), w} \cong \curlybr[\big]{\text{zips for $\roundbr[\big]{(L^{(m)} \calG)_{\Fpbar}, H_{w, +}^{(m)}, H_{w, -}^{(m, n)}, H_{w, -}^{(n)}, \varphi}$}}.
    \]
\end{proposition}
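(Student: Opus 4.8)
The plan is to follow the same strategy as in the proof of the analogous statement for Bültel-Pappas displays in Subsection~\ref{subsec:comp-bueltel-pappas}, adapting it to the relative (over a single KR stratum) characteristic~$p$ setting. First I would establish the equivalence $\Pair_{\calG, \mu, \Fpbar}^w \cong \curlybr[\big]{\text{$H_{w, +}$-torsors}}$. By \Cref{lem:g-mu-pairs-stack} we have $\Pair_{\calG, \mu, \Fpbar}^w \cong [(L^+ \calG)_{\Fpbar} \backslash \rmM^{\loc, w}]$, and since $\rmM^{\loc, w}$ is a single $\calG_{\Fpbar}$-orbit whose stabilizer of the standard point $w$ is exactly $H_{w, +}$ (this is how $H_{w, +}$ was defined), the orbit map exhibits $\rmM^{\loc, w}$ as the homogeneous space $(L^+ \calG)_{\Fpbar}/H_{w, +}$. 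Thus $\Pair_{\calG, \mu, \Fpbar}^w$ is the neutral gerbe banded by $H_{w, +}$, with the standard pair $(\Lambda_{\Zpbrev}, M_1^{\std})_R$ as a global object; concretely, every type-$w$ pair is fppf-locally isomorphic to the standard one (trivialize the underlying $\calG$-torsor, then use transitivity and smoothness of the orbit to move the filtration to $w$), and the $\iIsom$-functor produces the desired $H_{w, +}$-torsor.

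For the compatibility of $(M, M_1) \mapsto \widetilde{M_1}$ (see \Cref{prop:g-mu-m-1-tilde}) with $\calQ \mapsto (L^+ \calG)_{\Fpbar} \times^{\varphi, H_{w, +}} \calQ$, I would argue exactly as in Subsection~\ref{subsec:comp-bueltel-pappas}: the functor $(M, M_1) \mapsto \widetilde{M_1}$ sends the standard pair to $\widetilde{M_1^{\std}}$, which $\xi$ identifies with the trivial $\calG$-torsor, and by the very definition of $\varphi$ --- which encodes how $h \in H_{w, +}$ acts on $\widetilde{M_1^{\std}}$ via $\widetilde{h}$, conjugated by $\xi$ --- the automorphisms of the standard pair act on $\widetilde{M_1^{\std}} \cong \Lambda_{\Zpbrev}$ precisely through $\varphi$. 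Since both $(M, M_1) \mapsto \widetilde{M_1}$ and the induction functor are compatible with this twisting/$\iIsom$ description, they agree; this is the formal consequence alluded to in the Bültel-Pappas case.

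It then remains to upgrade this to the display/zip equivalence. Translating a type-$w$ display $(M, M_1, \Psi)$: the pair gives an $H_{w, +}$-torsor $I_+ = \calQ$; the underlying torsor $M$ is the induction $I := (L^+ \calG)_{\Fpbar} \times^{H_{w, +}} I_+$ (so $I_+$ is tautologically an $H_{w, +}$-reduction of $I$); and $\widetilde{M_1} = (L^+ \calG)_{\Fpbar} \times^{\varphi, H_{w, +}} I_+$. Because $\varphi$ factors through $H_{w, -}$, the torsor $\widetilde{M_1}$ carries a canonical $H_{w, -}$-reduction $J_- := H_{w, -} \times^{\varphi, H_{w, +}} I_+$. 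Transporting $J_-$ along the display isomorphism $\Psi \colon \widetilde{M_1} \to M$ produces an $H_{w, -}$-reduction $I_- := \Psi(J_-)$ of $I = M$, and $\Psi$ restricts to an isomorphism of $H_{w, -}$-torsors $H_{w, -} \times^{\varphi, H_{w, +}} I_+ \to I_-$. Conversely a zip $(I, I_+, I_-, \Psi)$ reconstructs the display: the reduction $I_+$ yields a type-$w$ pair with $M = I$, and reinflating the zip's $\Psi$ along $H_{w, -} \subseteq (L^+ \calG)_{\Fpbar}$ to the full torsors recovers $\Psi \colon \widetilde{M_1} \to M$. These assignments are mutually inverse and prove the third equivalence.

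Finally, the truncated statements follow by the identical argument with $(L^+ \calG)_{\Fpbar}$, $H_{w, +}$, $H_{w, -}$ replaced by $(L^{(m)} \calG)_{\Fpbar}$, $H_{w, +}^{(m)}$ and the appropriate level-$n$ versions. The one point requiring genuine care --- and the step I expect to be the main obstacle --- is matching the reduction data across the two truncation levels: for an $(m, n)$-truncated display, $\Psi$ lives over $W_n(R)$ and yields an $H_{w, -}^{(n)}$-reduction of $M_{W_n(R)}$, whereas the zip asks for a reduction $I_-$ of the level-$m$ torsor $M$ to $P_- = H_{w, -}^{(m, n)}$. These correspond bijectively precisely because $H_{w, -}^{(m, n)}$ is by definition the preimage of $H_{w, -}^{(n)}$ under $(L^{(m)} \calG)_{\Fpbar} \to (L^{(n)} \calG)_{\Fpbar}$, so that $P_-$-reductions of $M$ are the same as $H_{w, -}^{(n)}$-reductions of $M_{W_n(R)}$; pushing $J_-$ forward to $H_{w, -}^{(n)}$ and applying $\Psi$ then yields $I_-$. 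The case $n = \onerdt$ is handled the same way, now using that the relevant quotient is the reductive quotient $\calG_{\Fp}^{\rdt}$ and that $\varphi$ together with the reductions descend to it.
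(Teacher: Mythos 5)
Your proposal is correct and matches the paper's approach: the paper's own proof is literally ``This is immediate from the definitions,'' and what you have written is precisely the definitional unwinding the author has in mind (transitivity of the action on the KR stratum for the first equivalence, the fact that $\varphi$ is \emph{defined} as $\xi$-conjugation of $\widetilde{h}$ for the second, and the formal torsor/reduction dictionary --- including the identification of $H_{w,-}^{(m,n)}$-reductions of $M$ with $H_{w,-}^{(n)}$-reductions of $M_{W_n(R)}$ --- for the third), mirroring the proof given for the analogous Bültel--Pappas comparison in Subsection \ref{subsec:comp-bueltel-pappas}.
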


\begin{proof}
    This is immediate from the definitions.
\end{proof}

In \cite[Section 1.3]{shen-yu-zhang} the authors construct a zip datum $\calZ_w = (\calG_{\Fpbar}^{\rdt}, P_+, P_-, L_-, \varphi)$ associated to $w$.
As in \cite[Section 3.3]{shen-yu-zhang} we have natural morphisms of zip data
\[
    \roundbr[\big]{(L^+ \calG)_{\Fpbar}, H_{w, +}, H_{w, -}, H_{w, -}, \varphi}, \roundbr[\big]{(L^{(m)} \calG)_{\Fpbar}, H_{w, +}^{(m)}, H_{w, -}^{(m, n)}, H_{w, -}^{(n)}, \varphi} \to \calZ_w
\]
that are compatible in the obvious sense.
If we write $\Zip_{\calG, \mu, w}$ for the stack of zips for $\calZ_w$ we thus obtain natural (compatible) smooth morphisms of algebraic stacks
\[
    \Disp_{\calG, \mu, \Fpbar}^{(m, n), w} \to \Zip_{\calG, \mu, w}.
\]
For $n = \onerdt$ this morphism is a homeomorphism on the level of underlying topological spaces by \cite[Proposition 4.2.6]{shen-yu-zhang}.

\begin{remark}
    When $\calG$ is reductive and $w = \mu(p)$ as in \Cref{rmk:zip-reductive} then the zip datum $\calZ_w$ is the one attached to $(\calG_{\Fp}, \mu)$, see \Cref{ex:zip-datum}.
\end{remark}

\subsection{Application to Shimura varieties}

\begin{notation}
    In this section we change our setup.

    Let $(\bfG, \bfX)$ be a Shimura datum of Hodge type (i.e.\ a Shimura datum that embeds into a Siegel Shimura datum).
    Assume that $G \coloneqq \bfG_{\Qp}$ splits over a tamely ramified extension of $\Qp$ and $p \nmid \abs{\pi_1(G^{\der})}$.
    We then have the associated reflex field $\bfE$ and the associated conjugacy class $\bmmu$ and denote by $E$ the completion of $\bfE$ at the place corresponding to the embedding $\bfE \subseteq \Qbar \to \Qpbar$.
    Let $\bfK_p \subseteq G(\Qp)$ be a parahoric stabilizer and let $\calG$ be the corresponding parahoric group scheme over $\Zp$, so that $\calG_{\Qp} = G$ and $\calG(\Zp) = \bfK_p$.

    By results of Kisin and Pappas (see \cite{kisin-pappas}) there then exist a homogeneous symplectic $\QQ$-vector space $(V, \overline{\psi})$, a self-dual $\Zp$-lattice $\Xi \subseteq V_{\Qp}$ and an embedding of Shimura data
    \[
        (\bfG, \bfX) \to (\GSp(V), S^{\pm})
    \]
    such that $G \to \GSp(V)_{\Qp}$ extends to a closed immersion
    \[
        \calG \to \GSp(\Xi)
    \]
    of group schemes over $\Zp$ and such that $X_{\bmmu}(G) \to X_{\bmmu_g}(\GSp(V)_{\Qp})_E = \LGrass_{V, E}$ extends to a closed immersion of local models
    \[
        \bbM^{\loc}_{\calG, \bmmu} \to (\bbM^{\loc}_{\GSp(\Xi), \bmmu_g})_{\calOE} = \LGrass_{\Xi, \calOE}.
    \]

    Set $\Lambda \coloneqq \Xi^{\vee}$.
    Then we have the local model datum $(\calG, \bmmu)$ and the closed immersion
    \[
        \iota \colon \calG \to \GSp(\Xi) \to \GL(\Xi) \cong \GL(\Lambda)
    \]
    satisfies the assumptions from Notation \ref{not:g-mu-displays-group-setup} (for $(h, d) = (2g, g)$).

    Let $\bfK^p \subseteq \bfG(\adeles^p)$ be a small enough compact open subgroup and set $\bfK \coloneqq \bfK_p \bfK^p \subseteq \bfG(\adeles)$.
    Choose a small enough compact open subgroup $\bfL^p \subseteq \GSp(V)(\adeles^p)$ such that $\bfK^p$ maps into $\bfL^p$ and such that the induced morphism of Shimura varieties
    \[
        \Sh_{\bfK}(\bfG, \bfX) \to \Sh_{\bfL}\roundbr[\big]{\GSp(V), S^{\pm}}_{\bfE}
    \]
    is a closed immersion (where we have set $\bfL_p \coloneqq \GSp(\Xi)(\Zp)$ and $\bfL \coloneqq \bfL_p \bfL^p$).
    Note that this is always possible by \cite[Lemma 2.1.2]{kisin}.
    We then have the Kisin-Pappas integral model $\scrS_{\bfK} = \scrS_{\bfK}(\bfG, \bfX)$ of $\Sh_{\bfK}(\bfG, \bfX)$ over $\calOE$ that is the normalization of the closure of $\Sh_{\bfK}(\bfG, \bfX)_E$ inside $\scrS_{\bfL}(\GSp(V), S^{\pm})_{\calOE}$.
    Here $\scrS_{\bfL}(\GSp(V), S^{\pm})$ is the natural integral model of $\Sh_{\bfL}(\GSp(V), S^{\pm})$ over $\Zp$ coming from its description as a moduli space of principally polarized Abelian varieties.
    Write $\widehat{\scrS_{\bfK}}$ for the $p$-completion of $\scrS_{\bfK}$.

    We assume that Assumption \ref{hyp:grothendieck-messing} is satisfied.
\end{notation}

\begin{construction} \label{con:display-hodge-type}
    Let
    \[
        A \to \scrS_{\bfK}
    \]
    be the pullback of the ($g$-dimensional) Abelian variety up to prime-to-$p$ isogeny over $\scrS_{\bfL}(\GSp(V), S^{\pm})$ coming from its moduli description, and let
    \[
        X \coloneqq A \squarebr[\big]{p^{\infty}}
    \]
    be its associated $p$-divisible group (that is of height $2g$ and dimension $g$).
    By \Cref{thm:p-div-groups-disps} $X$ gives rise to the following data.
    \begin{itemize}
        \item
        A display $(M, M_1, \Psi)$ of type $(2g, g)$ over $\widehat{\scrS_{\bfK}}$.

        \item
        For every point $x \in \scrS_{\bfK}(\Fpbar)$ a Dieudonné display $(M_x, M_{x, 1}, \Psi_x)$ of type $(2g, g)$ over the completed local ring of $\scrS_{\bfK, \calO_{\Ebrev}}$ at $x$.
        These are compatible with $(M, M_1, \Psi)$ in the obvious sense. 
    \end{itemize}
\end{construction}

\begin{theorem}[Hamacher-Kim, Pappas] \label{thm:shimura-variety-g-mu-display}
    There are naturally defined compatible $\calG$-structures on $M$ and $M_x$ for $x \in \scrS_{\bfK}(\Fpbar)$ that make $(M, M_1, \Psi)$ and $(M_x, M_{x, 1}, \Psi_x)$ into (Dieudonné) $(\calG, \bmmu)$-displays.

    Moreover the Dieudonné $(\calG, \bmmu)$-displays $(M_x, M_{x, 1}, \Psi_x)$ are locally universal (see \Cref{rmk:locally-universal}).
\end{theorem}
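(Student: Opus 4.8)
The plan is to obtain the $\calG$-structures from the crystalline Tate tensors attached to the Hodge cycles on $A$, and then to recognize each resulting Dieudonné $(\calG, \bmmu)$-display as the universal deformation produced in Construction \ref{con:universal-deformation-g-mu}.

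First I would recall that the embedding $\iota \colon \calG \to \GL(\Lambda)$ coming from the Hodge type datum is cut out by a finite family of tensors $(s_\alpha)$ on $\Lambda$ (assembled from the symplectic pairing together with the Hodge cycles defining $\bfG \subseteq \GSp(V)$). These Hodge cycles give rise to crystalline Tate tensors on the $p$-divisible group $X = A[p^\infty]$, and by functoriality of the display functor (Theorem \ref{thm:p-div-groups-disps}) they produce a family of $\sigma$-invariant tensors $\widetilde{s}_\alpha$ on the display $(M, M_1, \Psi)$ over $\widehat{\scrS_{\bfK}}$ of Construction \ref{con:display-hodge-type}. The substance of the work of Hamacher and Kim (\cite{hamacher-kim}) is then that the tensors $\widetilde{s}_\alpha$ cut out a $\calG$-structure on $M$ (i.e.\ the sheaf of tensor-preserving trivializations is a $\calG$-torsor), that the Hodge filtration $M_1$ lands in $\rmM^{\loc}$ so that $(M, M_1)$ is genuinely a $(\calG, \bmmu)$-pair, and that $\Psi$ respects the tensors, hence is an isomorphism of $\calG$-torsors. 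Base changing along $\widehat{\scrS_{\bfK}} \to \widehat{\calO}_{\scrS_{\bfK}, x}$ and refining the display to the Dieudonné display, one transports the $\calG$-structure from $W$ to the Zink ring $\What$ (using that the two recover one another) to obtain the compatible $\calG$-structures on $M_x$ claimed in the first assertion.

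For the second assertion I would use Remark \ref{rmk:locally-universal}, according to which local universality of $(M_x, M_{x, 1}, \Psi_x)$ is equivalent to it being a universal deformation of its special fiber $(M_0, M_{0, 1}, \Psi_0)$; by Theorem \ref{thm:universal-deformation-g-mu-display} this amounts to showing that the classifying morphism $\Spf(\widehat{\calO}_{\scrS_{\bfK, \calO_{\Ebrev}}, x}) \to \Def_{\calG}$ is an isomorphism. The key is to match the two deformation theories: by Theorem \ref{thm:p-div-groups-disps} deformations of the Dieudonné display are the same as deformations of $X_x$, and the Grothendieck-Messing descriptions on the two sides (Theorem \ref{thm:grothendieck-messing} and crystalline Dieudonné theory) are compatible, so that $\Def_{\calG}$ coincides with the deformation problem of $X_x$ together with its Tate tensors. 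This last problem is exactly the one analyzed by Kisin and Pappas, whose local model diagram (\cite[Proposition 3.2.17]{kisin-pappas}) identifies its deformation ring with the completed local ring $R_{\calG}$ of $\rmM^{\loc}$, compatibly with the completed local ring of $\scrS_{\bfK}$ at $x$ via the Hodge filtration. Concretely I would exhibit a trivialization of $M_x$ that is rigid in the first order (arranging $\Psi_x$ to reduce to $\Psi_0$ over the first-order-trivial thickening) and then invoke this identification to see that the filtration map $R_{\calG} \to \widehat{\calO}_{\scrS_{\bfK}, x}$ is an isomorphism, which is the criterion of Remark \ref{rmk:locally-universal}.

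I expect the main obstacle to be precisely this final identification: reconciling the abstract universal deformation ring $R_{\calG}$ of a Dieudonné $(\calG, \bmmu)$-display with the completed local ring of the Shimura variety. The two sides rest on different deformation-theoretic footings --- crystalline deformation of $p$-divisible groups with tensors in \cite{kisin-pappas} versus Zink-ring Dieudonné $(\calG, \bmmu)$-displays governed by our Theorem \ref{thm:g-mu-grothendieck-messing} --- and matching their normalizations (as well as checking that the tensors on the two sides correspond) is the delicate part. This is also the point at which Assumption \ref{hyp:grothendieck-messing} is indispensable, since it is what makes the universal deformation of Construction \ref{con:universal-deformation-g-mu}, and hence the target $\Def_{\calG} \cong \Spf(R_{\calG})$ of the comparison, well-defined in the first place.
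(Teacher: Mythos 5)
The first thing to note is that the paper does not prove this theorem at all: its entire proof is the single line ``See \cite[Theorem 8.1.4]{pappas}'', the result being imported wholesale from Pappas (who in turn builds on \cite{hamacher-kim} and \cite{kisin-pappas}). Your sketch is therefore best read as a blind reconstruction of the \emph{cited} proof rather than of anything in this paper, and at that level it is broadly faithful: the $\calG$-structures do arise from crystalline Tate tensors attached to the Hodge cycles (this is the substance of Hamacher--Kim), and local universality is verified through the Kisin--Pappas local model diagram, essentially \cite[Proposition 3.2.17]{kisin-pappas}. What the paper's bare citation buys is a clean separation of labor; what your version buys is an explicit account of which external inputs carry which part of the statement.

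Two points in your sketch should be corrected, though. First, the $\calG$-structure on the Dieudonné display $M_x$ cannot be obtained by ``transporting'' the one on the Witt-vector display: $\What(R)$ is a \emph{subring} of $W(R)$, so a torsor or family of tensors over $W(R)$ does not restrict or descend to one over $\What(R)$; in the cited works the tensors on $M_x$ are constructed directly at the level of the Kisin--Pappas (Breuil--Kisin-type) data and only then compared with the Witt-vector display, so this step of your argument is running in the wrong direction. Second, your treatment of Assumption \ref{hyp:grothendieck-messing} muddles the logical order. Local universality in Pappas's sense --- the existence of a trivialization rigid in the first order for which the filtration map $R_{\calG} \to R$ is an isomorphism, as recalled in \Cref{rmk:locally-universal} --- is a concrete criterion whose formulation only uses the module-level isomorphism of \Cref{lem:m-1-grothendieck-messing}, and it is exactly this criterion that \cite[Theorem 8.1.4]{pappas} establishes. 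Your plan first invokes the equivalence of \Cref{rmk:locally-universal} with the abstract universal property (which does require the Assumption, via \Cref{thm:universal-deformation-g-mu-display}), and then ends by verifying the concrete criterion anyway; the detour through $\Def_{\calG} \cong \Spf(R_{\calG})$ is redundant, since once you exhibit the rigid trivialization and the isomorphism $R_{\calG} \to \widehat{\calO}_{\scrS_{\bfK}, x}$ you are done by definition. In the paper's architecture the Assumption is not what makes this theorem meaningful; it is what later upgrades local universality to the universal property used in the proof of \Cref{thm:main-result-hodge-type}.
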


\begin{proof}
    See \cite[Theorem 8.1.4]{pappas}.
\end{proof}

We can now state and prove our main result.

\begin{theorem} \label{thm:main-result-hodge-type}
    The morphism
    \[
        \text{$\widehat{\scrS_{\bfK}} \to \Disp_{\calG, \bmmu}^{(m, n)}$} \qquad \text{(respectively $\scrS_{\bfK, \Fq} \to \Disp_{\calG, \bmmu}^{(m, \onerdt)}$ for $n = \onerdt$)}
    \]
    induced by $(M, M_1, \Psi)$ is smooth.
\end{theorem}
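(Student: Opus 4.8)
The plan is to deduce smoothness from the formal lifting criterion, checked on completed local rings at geometric points, where \Cref{thm:shimura-variety-g-mu-display} and \Cref{thm:universal-deformation-g-mu-display} reduce the assertion to the lifting statement \Cref{lem:restriction-smooth-dieudonne}.

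First I would record that $f \colon \widehat{\scrS_{\bfK}} \to \Disp_{\calG, \bmmu}^{(m, n)}$ is of finite presentation over $\Spf(\calOE)$, so that smoothness may be checked after the (harmless) base change along $\calOE \to \calO_{\Ebrev}$ and, since a $p$-adic formal scheme is topologically its special fiber, at the completed local rings at the $\Fpbar$-points $x \in \scrS_{\bfK}(\Fpbar)$. Using the presentation $\Disp_{\calG, \bmmu}^{(m, n)} \cong [(L^{(m)} \calG)_{\Delta} \backslash \rmM^{\loc, (n)}]$ from \Cref{lem:g-mu-disps-stack}, I would pick the smooth atlas $\rmM^{\loc, (n)} \to \Disp_{\calG, \bmmu}^{(m, n)}$, form the fiber product $V \coloneqq \widehat{\scrS_{\bfK}} \times_{\Disp_{\calG, \bmmu}^{(m, n)}} \rmM^{\loc, (n)}$, and note that $V \to \widehat{\scrS_{\bfK}}$ is a smooth surjection. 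Thus it suffices to prove that $V \to \rmM^{\loc, (n)}$ is smooth, which again I would verify through the formal lifting criterion at points lying over the $x$ as above.

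Next, fixing such an $x$ and writing $R_x$ for the completed local ring of $\scrS_{\bfK, \calO_{\Ebrev}}$ at $x$, I would invoke \Cref{thm:shimura-variety-g-mu-display}: the Dieudonné $(\calG, \bmmu)$-display $(M_x, M_{x, 1}, \Psi_x)$ is locally universal, so by \Cref{rmk:locally-universal} and \Cref{thm:universal-deformation-g-mu-display} the induced morphism $\Spf(R_x) \to \Def_{\calG}$ is an isomorphism; in other words $R_x$ pro-represents the deformation functor of Dieudonné $(\calG, \bmmu)$-displays. Now consider a square-zero thickening $T_0 \hookrightarrow T$ of Artinian local $\calO_{\Ebrev}$-algebras with perfect residue field, equip the kernel with its trivial (nilpotent) divided powers, and suppose given a point of $\rmM^{\loc, (n)}(T)$ together with a lift to $V(T_0)$ restricting to it. The point of $V(T_0)$ amounts to a Dieudonné $(\calG, \bmmu)$-display over $T_0$ (via $R_x \cong R_{\calG}$) whose $(m, n)$-truncation is identified with the restriction of the chosen $(m, n)$-truncated $(\calG, \bmmu)$-display over $T$. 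By \Cref{lem:restriction-smooth-dieudonne} these data lift compatibly to a Dieudonné $(\calG, \bmmu)$-display over $T$, which through the isomorphism $\Spf(R_x) \cong \Def_{\calG}$ produces the sought-after lift in $V(T)$. This establishes formal smoothness of $V \to \rmM^{\loc, (n)}$, and hence the smoothness of $f$. The case $n = \onerdt$ (with $m \geq 2$) is entirely analogous, as \Cref{thm:shimura-variety-g-mu-display} and \Cref{lem:restriction-smooth-dieudonne} are both available in that setting.

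The main obstacle, and the step I would treat most carefully, is the passage from the stacky morphism $f$ to the concrete lifting problem for $V \to \rmM^{\loc, (n)}$: one must check that an arbitrary infinitesimal test situation can be arranged over a base of the form to which the Dieudonné and Grothendieck-Messing theory (\Cref{thm:g-mu-grothendieck-messing}, underlying \Cref{lem:restriction-smooth-dieudonne}) apply -- namely complete Noetherian local $\calO_{\Ebrev}$-algebras with perfect residue field and divided powers on the kernel -- and that the identification of $R_x$ with $\Def_{\calG}$ is compatible with truncation, so that lifts of truncated displays really do match lifts of points of $V$. Once this bookkeeping is in place, the smoothness of the truncation morphisms (\Cref{lem:restriction-smooth}) together with \Cref{lem:restriction-smooth-dieudonne} supplies exactly the unobstructed lifts along the ``extra'' torsor and gerbe directions that separate $\widehat{\scrS_{\bfK}}$ from $\Disp_{\calG, \bmmu}^{(m, n)}$.
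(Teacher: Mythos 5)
Your proposal is correct and takes essentially the same route as the paper's proof: reduce smoothness to the infinitesimal lifting criterion over Artinian local $\calO_{\Ebrev}$-algebras with residue field $\Fpbar$, identify the formal neighborhood of $x \in \scrS_{\bfK}(\Fpbar)$ with $\Def_{\calG} \cong \Spf(R_{\calG})$ via local universality (\Cref{thm:shimura-variety-g-mu-display}, \Cref{rmk:locally-universal}, \Cref{thm:universal-deformation-g-mu-display}), and solve the lifting problem with \Cref{lem:restriction-smooth-dieudonne}. Your additional pullback along the atlas $\rmM^{\loc, (n)} \to \Disp_{\calG, \bmmu}^{(m, n)}$ is a harmless repackaging of the same argument; the only inaccuracy is that \Cref{lem:restriction-smooth-dieudonne} does not rest on Grothendieck-Messing theory (its proof uses only the smoothness of $\calG$), so equipping the square-zero kernel with divided powers is unnecessary for that step -- the divided powers enter earlier, in establishing that $\Spf(R_x) \to \Def_{\calG}$ is an isomorphism.
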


\begin{proof}
    Both $\scrS_{\bfK}$ and $\Disp_{\calG, \mu}^{(m, n)}$ are of finite type over $\Spf(\calO_E)$.
    Thus by \cite[Tag 02HX]{stacks-project} it suffices to show that every lifting problem
    \[
    \begin{tikzcd}
        \Spec(R) \ar[r] \ar[d]
        & \scrS_{\bfK} \ar[d]
        \\
        \Spec(R') \ar[r] \ar[ru, dashed]
        & \Disp_{\calG, \mu}^{(m, n)},
    \end{tikzcd}
    \]
    where $R' \to R$ is a surjection of Artinian local $\calO_{\Ebrev}$-algebras with residue field $\Fpbar$, admits a solution.
    This follows from \Cref{thm:shimura-variety-g-mu-display}, i.e.\ the local universality of the Dieudonné $(\calG, \mu)$-displays $(M_x, M_{x, 1}, \Psi_x)$, together with \Cref{lem:restriction-smooth-dieudonne}.
\end{proof}

Using \Cref{thm:main-result-hodge-type} we can give new proofs of the following results of Shen, Yu and Zhang.

\begin{corollary}[{\cite[Theorem 4.4.3]{shen-yu-zhang}}] \label{cor:main-result-perfection}
    There is a natural perfectly smooth morphism
    \[
        \scrS_{\bfK, \Fq}^{\pf} \to \Sht_{\calG, \bmmu}^{\loc, (m, n)}
    \]
    (where we allow $n = \onerdt$); see Subsection \ref{subsec:comp-local-shtukas} for the definition of $\Sht_{\calG, \bmmu}^{\loc, (m, n)}$.
\end{corollary}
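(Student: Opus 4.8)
The plan is to obtain the morphism by perfecting the smooth morphism already produced in \Cref{thm:main-result-hodge-type} and then invoking the comparison of \Cref{prop:comparison-local-shtukas}; the only point requiring care is the passage from ordinary smoothness to perfect smoothness. First I would pass to special fibers. By \Cref{thm:main-result-hodge-type} the morphism $\widehat{\scrS_{\bfK}} \to \Disp_{\calG, \bmmu}^{(m, n)}$ is smooth, and since smoothness is stable under base change its restriction to special fibers
\[
    \scrS_{\bfK, \Fq} \to \roundbr[\big]{\Disp_{\calG, \bmmu}^{(m, n)}}_{\Fq}
\]
is a smooth morphism of algebraic stacks over $\Fq$. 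Both source and target are of finite type over $\Fq$, as was already recorded in the proof of \Cref{thm:main-result-hodge-type}. In the case $n = \onerdt$ the relevant morphism $\scrS_{\bfK, \Fq} \to \Disp_{\calG, \bmmu}^{(m, \onerdt)}$ from \Cref{thm:main-result-hodge-type} already lives over $\Fq$, so no base change is necessary.

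Second I would apply the perfection functor. The key input is the standard fact that the perfection of a smooth morphism of finite type between $\Fq$-algebraic stacks is perfectly smooth (this is precisely the passage used in \cite{xiao-zhu} and \cite{shen-yu-zhang}). Applied to the morphism above, this yields a perfectly smooth morphism $\scrS_{\bfK, \Fq}^{\pf} \to \roundbr[\big]{\Disp_{\calG, \bmmu}^{(m, n)}}_{\Fq}^{\pf}$. Finally, \Cref{prop:comparison-local-shtukas} provides the equivalence $\roundbr[\big]{\Disp_{\calG, \bmmu}^{(m, n)}}_{\Fq}^{\pf} \cong \Sht_{\calG, \bmmu}^{\loc, (m, n)}$, and composing gives the desired perfectly smooth morphism $\scrS_{\bfK, \Fq}^{\pf} \to \Sht_{\calG, \bmmu}^{\loc, (m, n)}$; for $n = \onerdt$ one uses the evident $\onerdt$-analog of \Cref{prop:comparison-local-shtukas}, proved by the same argument.

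The main obstacle is really only this conceptual point that perfection preserves smoothness in the appropriate perfect sense; it relies on both stacks being of finite type so that perfection is well-behaved, and I expect this to be the crux of the argument. Crucially, however, it is \emph{not} hard here: in contrast to the arguments of \cite{xiao-zhu} and \cite{shen-yu-zhang} flagged as problematic in the introduction, smoothness has already been established \emph{before} perfection, at the level of the deperfected display stacks $\Disp_{\calG, \bmmu}^{(m, n)}$. Consequently the delicate commutativity issue that undermined those arguments never enters, and the naturality of the resulting morphism is immediate from tracing through \Cref{con:display-hodge-type} together with the explicit formulas in the proof of \Cref{prop:comparison-local-shtukas}.
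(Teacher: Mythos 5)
Your proposal is correct and follows exactly the paper's route: the paper's proof of \Cref{cor:main-result-perfection} is precisely the combination of \Cref{thm:main-result-hodge-type} (smoothness before perfection) with the equivalence $\roundbr[\big]{\Disp_{\calG, \bmmu}^{(m, n)}}_{\Fq}^{\pf} \cong \Sht_{\calG, \bmmu}^{\loc, (m, n)}$ of \Cref{prop:comparison-local-shtukas}. Your elaboration of the intermediate steps (base change to the special fiber, perfection of a smooth finite-type morphism being perfectly smooth, and the $\onerdt$-case) just makes explicit what the paper leaves implicit.
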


\begin{proof}
    This follows from \Cref{thm:main-result-hodge-type} and \Cref{prop:comparison-local-shtukas}.
\end{proof}

\begin{corollary}[{\cite[Theorem 3.1.2]{zhang}, \cite[Theorem 3.4.11]{shen-yu-zhang}}] \label{cor:main-result-kr-local}
    Fix an element $w \in \Adm(\bmmu)_{\calG}$ and a representative of $w$ in $\rmM^{\loc, (\infty)}(\Fpbar)$.
    Then there is a natural smooth morphism
    \[
        \scrS_{\bfK, \Fpbar}^w \to \Zip_{\calG, \bmmu, w}
    \]
    where $\scrS_{\bfK, \Fpbar}^w \subseteq \scrS_{\bfK, \Fpbar}$ denotes the KR-stratum corresponding to $w$; see Subsection \ref{subsec:comp-zips} for the definition of $\Zip_{\calG, \bmmu, w}$.
\end{corollary}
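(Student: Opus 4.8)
The plan is to realize the desired morphism as a composition of two morphisms already known to be smooth: first restrict the morphism of \Cref{thm:main-result-hodge-type} to the KR-stratum, and then post-compose with the comparison morphism to the stack of zips built in Subsection~\ref{subsec:comp-zips}. Concretely, fix any $m \geq 2$ (say $m = 2$) and base change the smooth morphism $\scrS_{\bfK, \Fq} \to \Disp_{\calG, \bmmu}^{(m, \onerdt)}$ along $\Fq \to \Fpbar$. This produces a smooth morphism $\scrS_{\bfK, \Fpbar} \to \Disp_{\calG, \bmmu, \Fpbar}^{(m, \onerdt)}$ whose target now lives over $\Fpbar$, so that the fixed representative of $w$ in $\rmM^{\loc, (\infty)}(\Fpbar)$ and the zip datum $\calZ_w$ are available.

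Next I would identify the KR-stratum $\scrS_{\bfK, \Fpbar}^w$ with the preimage of the substack $\Disp_{\calG, \bmmu, \Fpbar}^{(m, \onerdt), w}$ of displays of type $w$. The type of the underlying $m$-truncated $(\calG, \bmmu)$-pair is read off from the image of its classifying map $q$ in $\rmM^{\loc}_{\Fpbar}$, and for the display attached to $\scrS_{\bfK}$ in Construction~\ref{con:display-hodge-type} this is precisely the Hodge-filtration map of the local model diagram, whose fibers over the locally closed strata $\rmM^{\loc, w}$ cut out the KR-stratification in the sense of \cite{he-rapoport}. Granting the resulting scheme-theoretic identification, the restricted morphism
\[
    \scrS_{\bfK, \Fpbar}^w \to \Disp_{\calG, \bmmu, \Fpbar}^{(m, \onerdt), w}
\]
is smooth as the base change of the smooth morphism above along the locally closed immersion $\Disp_{\calG, \bmmu, \Fpbar}^{(m, \onerdt), w} \hookrightarrow \Disp_{\calG, \bmmu, \Fpbar}^{(m, \onerdt)}$.

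It then remains to compose with the smooth comparison morphism $\Disp_{\calG, \bmmu, \Fpbar}^{(m, \onerdt), w} \to \Zip_{\calG, \bmmu, w}$ constructed in Subsection~\ref{subsec:comp-zips}, which is shown there to be smooth for all truncation parameters including $n = \onerdt$; since a composition of smooth morphisms is smooth, this gives the asserted morphism. The only point demanding genuine care is the scheme-theoretic (as opposed to merely set-theoretic) compatibility of the KR-stratification coming from the local model diagram with the type decomposition of the display stack, and this I would settle using the identification $\calA = (\rmM^{\loc})_{\Fq}^{\pf}$ of Subsection~\ref{subsec:comp-local-shtukas} together with the fact that the pair underlying the display on $\widehat{\scrS_{\bfK}}$ agrees with the local model classifying map; everything else is a formal consequence of \Cref{thm:main-result-hodge-type} and the constructions of Subsection~\ref{subsec:comp-zips}.
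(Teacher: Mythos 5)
Your proposal is correct and follows essentially the same route as the paper's own (very terse) proof, which simply composes the smooth morphism of \Cref{thm:main-result-hodge-type} for $n = \onerdt$, restricted to the type-$w$ locus identified with the KR stratum $\scrS_{\bfK, \Fpbar}^w$, with the smooth comparison morphism $\Disp_{\calG, \bmmu, \Fpbar}^{(m, \onerdt), w} \to \Zip_{\calG, \bmmu, w}$ from Subsection \ref{subsec:comp-zips}. The scheme-theoretic compatibility of the KR stratification with the type decomposition that you single out is indeed the only point beyond formal composition; the paper leaves it implicit, and your way of settling it (the underlying pair of the display of Construction \ref{con:display-hodge-type} is classified by the local model diagram's Hodge-filtration map) is the intended one.
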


\begin{proof}
    This follows from \Cref{thm:main-result-hodge-type} and the discussion in Subsection \ref{subsec:comp-zips}.
\end{proof}

\begin{remark}
    The underlying topological space
    \[
        \abs[\Big]{\roundbr[\big]{\Disp_{\calG, \bmmu}^{(m, \onerdt)}}_{\Fpbar}} = \abs[\Big]{\roundbr[\big]{\Sht_{\calG, \bmmu}^{\loc, (m, \onerdt)}}_{\Fpbar}}
    \]
    has finitely many points whose closure relations can be described explicitly, see \cite[Lemma 4.2.4]{shen-yu-zhang}.
    The (finitely many) fibers of
    \[
        \scrS_{\bfK, \Fpbar} \to \roundbr[\big]{\Disp_{\calG, \bmmu}^{(m, \onerdt)}}_{\Fpbar}
    \]
    are precisely the EKOR strata introduced by He and Rapoport in \cite{he-rapoport}.
    Thus \Cref{thm:main-result-hodge-type} solves the problem of realizing the EKOR-stratification as a smooth morphism from the special fiber of the integral model $\scrS_{\bfK}$ to a naturally defined algebraic stack.
\end{remark}

    \printbibliography

\end{document}